\newcommand{\field}[1]{\mathbb{#1}}
\newcommand{\A}{\field{A}}
\newcommand{\G}{\field{G}}
\newcommand{\N}{\field{N}}
\newcommand{\Z}{\field{Z}}
\newcommand{\krn}{{\rm ker}\,}
\theoremstyle{plain}
\newtheorem{theorem}{Theorem}[section]
\newtheorem{proposition}[theorem]{Proposition}
\newtheorem{lemma}[theorem]{Lemma}
\newtheorem{corollary}[theorem]{Corollary}
\newtheorem{definition}[theorem]{Definition}
\newtheorem{example}[theorem]{Example}
\newtheorem{question}[theorem]{Question}
\theoremstyle{definition}
\theoremstyle{remark}
\begin{document}

\makeatletter	   
\makeatother     

\title{Affine and Unirational unique factorial domains with unmixed gradings}
\author{Gene Freudenburg \and Takanori Nagamine}
\date{\today} 
\subjclass[2010]{14R05, 13A02} 
\keywords{factorial variety, torus action of complexity one, unique factorization domain, graded ring, locally nilpotent derivation}
\thanks{The work of the second author was supported by JSPS KAKENHI Grant Number JP21K13782.}
\maketitle
\pagestyle{fancy}
\lhead{}
\rhead{\it Affine and unirational UFDs with unmixed gradings}

\begin{abstract} This paper studies the class of unique factorial domains $B$ over an algebraically closed field $k$ which are affine or unirational over $k$ and which admit an effective unmixed $\Z^{d-1}$-grading with $B_0=k$, where $d$ is the dimension of $B$. 
Geometrically, these correspond to factorial affine $k$-varieties with an unmixed torus action of complexity one and trivial invariants.  
Our main result shows that this class is identical to the class of rings defined by trinomial data, thus generalizing earlier work of Mori, of Ishida, and of Hausen, Herrppich and S\"uss.
\end{abstract}
\maketitle

\section{Introduction} Let $k$ be a field and $G\cong\Z^n$ for some integer $n\ge 0$ and let $B$ be an integral domain over $k$ with $G$-grading $B=\bigoplus_{g\in G}B_g$. The grading is 
{\bf effective} if the weight monoid $M=\{ g\in\Z^n\, |\, B_g\ne 0\}$ generates $G$ as a group, and {\bf unmixed} if the maximal subgroup of $M$ is trivial. 
If $B$ is affine of dimension $n$ over $k$ and the grading is effective, then $B$ is the coordinate ring of an affine toric variety. These rings have been studied for a long time and can be described combinatorially using cones. 

This paper investigates unique factorization domains (UFDs) $B$ which are either affine or unirational over $k$, and admit an effective unmixed $\Z^{d-1}$-grading with $B_0=k$, where $d$ is the dimension of $B$ and $k$ is algebraically closed. 
Several authors have classified such rings subject to certain additional hypotheses. Mori \cite{Mori.77} and Ishida \cite{Ishida.77} classified them for dimensions $d=2,3$, respectively, in case $B$ is affine. Hausen, Herppich and S\"uss \cite{Hausen.Herppich.Suss.11} classified them for all dimensions in the case $k$ is algebraically closed of characteristic zero and $B$ is affine. 
Our main result is the classification of these rings with no additional hypotheses. 

To this end, assume that the following data $\Delta$ are given. 
\begin{itemize}
\item [($\Delta$.1)] An integer $n\ge 2$ and partition $n=n_0+\cdots +n_r$ where $r,n_i\ge 1$, $0\le i\le r$.
\item [($\Delta$.2)] A sequence $\beta_i=(\beta_{i1},\hdots ,\beta_{in_i})\in\Z_+^{n_i}$, $0\le i\le r$, where 
$\gcd (d_i,d_j)=1$ when $i\ne j$, $d_i=\gcd(\beta_{i1},\hdots ,\beta_{in_i})$. 
\item [($\Delta$.3)] A sequence of distinct elements $\lambda_2,\hdots ,\lambda_r\in k^*$.
\end{itemize}
Any such data set $\Delta$ will be called {\bf trinomial data} over $k$. 

Given trinomial data $\Delta$ over $k$, define the ring $k[\Delta ]$ as follows. Let $k^{[n]}=k[T_0,\hdots ,T_r]$ be the polynomial ring in $n$ variables over $k$, where $T_i=\{ t_{i1},\hdots ,t_{in_i}\}$, $0\le i\le r$, defines a partition of the set of variables
$t_{ij}$. Given $i$, let $T_i^{\beta_i}$ denote the monomial $t_{i1}^{\beta_{i1}}\cdots t_{in_i}^{\beta_{in_i}}$. We then define: 
\[
k[\Delta ]=k[T_0,\hdots ,T_r]/(T_0^{\beta_0}+\lambda_iT_1^{\beta_1}+T_i^{\beta_i})_{2\le i\le r}
\]
Observe the following.
\begin{enumerate}
\item If $r=1$, then $\lambda_i$ is the empty sequence and $k[\Delta ]=k^{[n]}$.  
\item If $K$ is an extension field of $k$, then $K\otimes_kk[\Delta ]=K[\Delta ]$. 
\end{enumerate}

Recently, the authors and Daigle showed the following.
\begin{theorem}\label{DFN} {\rm (\cite{Daigle.Freudenburg.Nagamine.22}, Theorem 5.1)} Let $k$ be any field. Given trinomial data $\Delta$ over $k$, $k[\Delta ]$ is an affine rational UFD of dimension $n-r+1\ge 2$ over $k$. 
\end{theorem}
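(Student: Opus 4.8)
Affineness is built into the presentation, so the real content lies in the dimension, the rationality, and the factoriality. I would begin by normalizing the exponents: put $d_i=\gcd(\beta_{i1},\dots,\beta_{in_i})$, write $\beta_{ij}=d_i\gamma_{ij}$ with $\gcd_j(\gamma_{ij})=1$, and set $s_i=t_{i1}^{\gamma_{i1}}\cdots t_{in_i}^{\gamma_{in_i}}$, so that $T_i^{\beta_i}=s_i^{d_i}$ and the defining relations become $s_0^{d_0}+\lambda_is_1^{d_1}+s_i^{d_i}=0$ for $2\le i\le r$. Throughout I would induct on $r$, the base case $r=1$ being the polynomial ring $k^{[n]}$; at each step I adjoin the block $T_i$, writing $B_i=B_{i-1}[t_{i1},\dots,t_{in_i}]/(T_i^{\beta_i}+g_i)$ with $g_i=T_0^{\beta_0}+\lambda_iT_1^{\beta_1}\in B_{i-1}$. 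The single fact that drives the induction is that $g_i$ is a prime element of $B_{i-1}$; I expect proving this to be the main obstacle, and I defer it to the end.

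Granting that $g_i$ is prime in the factorial domain $B_{i-1}$, it is in particular not a proper power in $\mathrm{Frac}(B_{i-1})$. Since $\gcd_j(\gamma_{ij})=1$, a unimodular monomial substitution on the block torus carries $T_i^{\beta_i}$ to a pure power $w^{d_i}$, so the irreducibility of $T_i^{\beta_i}+g_i$ over $\mathrm{Frac}(B_{i-1})$ reduces to that of $w^{d_i}+g_i$; by the usual criterion the latter holds because $-g_i$ is not a $p$-th power for any prime $p\mid d_i$ (and is not of the form $-4h^4$ when $4\mid d_i$). Hence each $B_i$ is a domain and $B=B_r$ is cut out of $k^{[n]}$ by the regular sequence $f_2,\dots,f_r$; being a complete intersection it has dimension $n-(r-1)=n-r+1$, which is $\ge2$ since $n=\sum_i n_i\ge r+1$.

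For rationality I would exploit the geometry of the grading: the effective unmixed $\Z^{d-1}$-grading makes $X=\mathrm{Spec}\,k[\Delta]$ a variety with a faithful action of the split torus $T\cong\G_m^{d-1}$ of complexity one. Dividing each relation by $s_1^{d_1}$ (legitimate because the three monomials have equal degree) shows that the degree-zero subfield of $\mathrm{Frac}(B)$ is generated by $\tau=T_0^{\beta_0}/T_1^{\beta_1}$, so $\mathrm{Frac}(B)^T=k(\tau)$ is rational of transcendence degree one. Consequently $\mathrm{Frac}(B)$ is the function field of a torsor under $T$ over $\mathrm{Spec}\,k(\tau)$; as $T$ is split and $\mathrm{Pic}(k(\tau))=0$, the torsor is trivial, whence $\mathrm{Frac}(B)=k(\tau)(y_1,\dots,y_{d-1})$ is purely transcendental over $k$.

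It remains to prove factoriality, which is where the hypotheses are fully consumed. Because the grading group $\Z^{d-1}$ is torsion-free and $B_0=k$, it suffices to show that every homogeneous height-one prime is principal, i.e. that $\mathrm{Cl}(B)=0$. I would reduce this by Nagata's criterion: with $g_i$ prime in $B_{i-1}$, setting $t_{ij}=0$ annihilates $T_i^{\beta_i}$ and gives $B_i/(t_{ij})\cong\bigl(B_{i-1}/(g_i)\bigr)[t_{i\ell}:\ell\ne j]$, a domain, so every coordinate $t_{ij}$ is prime in $B_i$; it is then enough to prove that the localization obtained by inverting all the $t_{ij}$ is factorial, which amounts to computing the class group of the open torus part from the invariant divisors $V(t_{ij})$ and the principal divisors $\mathrm{div}(t_{ij})$, $\mathrm{div}(s_i)$. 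The presentation of $\mathrm{Cl}(B)$ produced this way has trivial cokernel precisely when the $d_i$ are pairwise coprime and the $\lambda_i$ are distinct --- the same conditions that make the binomial $g_i=T_0^{\beta_0}+\lambda_iT_1^{\beta_1}$ irreducible in $k[T_0,T_1]$ (its exponent vector is primitive because $\gcd(d_0,d_1)=1$) and, upon promotion to $B_{i-1}$, prime. Establishing this vanishing of the class group, equivalently the primality of the $g_i$, is the crux on which all four assertions rest, and it is here that I expect the real work to lie.
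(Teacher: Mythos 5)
First, a point of reference: the paper does not prove this statement at all --- it is imported verbatim from \cite{Daigle.Freudenburg.Nagamine.22}, Theorem 5.1 (a paper whose entire purpose, as its title indicates, is to develop Samuel-type criteria for quotients of this shape to be UFDs). So there is no internal proof to measure you against; your outline has to stand on its own. It does not, because the one fact you yourself identify as "the single fact that drives the induction" --- that $g_i=T_0^{\beta_0}+\lambda_iT_1^{\beta_1}$ is prime in $B_{i-1}$ --- is deferred at the start and never established at the end. For $i=2$ this is an elementary binomial irreducibility statement in a polynomial ring (using $\gcd(d_0,d_1)=1$), but for $i\ge 3$ the element $g_i$ lives in the non-polynomial ring $B_{i-1}$, where it admits the alternative expressions $g_i=-T_j^{\beta_j}+(\lambda_i-\lambda_j)T_1^{\beta_1}$ for $2\le j\le i-1$; proving it is prime there is precisely where the hypotheses that the $d_i$ are \emph{pairwise} coprime and the $\lambda_i$ are \emph{distinct} enter, and it is the entire content of the cited theorem. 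Your closing paragraph compounds the problem with a circularity: the Nagata reduction uses primality of $g_i$ in $B_{i-1}$ to conclude that each $t_{ij}$ is prime in $B_i$, and you then assert that the resulting class-group computation is "equivalent" to the primality of the $g_i$ --- so the argument consumes its own conclusion. Even granting the reduction, the localization at all the $t_{ij}$ is (after a monomial change of coordinates) a Laurent extension of the two-dimensional ring $k[u_0^{\pm},\dots,u_r^{\pm}]/(u_0^{d_0}+\lambda_iu_1^{d_1}+u_i^{d_i})$, whose factoriality is not visibly easier than the original problem; "computing the class group of the open torus part" is named but not done.

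Two secondary points. The rationality argument asserts that the degree-zero part $K$ of $\mathrm{frac}(B)$ equals $k(\tau)$ with $\tau=T_0^{\beta_0}/T_1^{\beta_1}$; that $K$ has transcendence degree one follows from the analogue of \emph{Proposition\,\ref{Laurent}}, but the identification of $K$ with the specific rational subfield $k(\tau)$ requires an actual computation (and cannot be replaced by L\"uroth, which would presuppose the unirationality you are trying to prove over a possibly non-algebraically-closed $k$). Once $K=k(\tau)$ is known, the torsor-triviality language is superfluous: the Laurent localization already exhibits $\mathrm{frac}(B)$ as purely transcendental over $K$. Finally, the reduction of irreducibility of $T_i^{\beta_i}+g_i$ to that of $w^{d_i}+g_i$ via a "unimodular monomial substitution" is only an automorphism of the Laurent ring in the block variables, so you need the extra (easy, but unstated) remark that the polynomial is not divisible by any $t_{ij}$ to descend the conclusion to the polynomial ring. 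The architecture of your induction is sound and close in spirit to the Samuel-criterion approach of the cited source, but as written the proof is missing its load-bearing step.
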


Our main result builds on {\it Theorem\,\ref{DFN}}. 
\begin{theorem}\label{main} Suppose that $k$ is an algebraically closed field and $B$ is an integral domain of finite transcendence degree $d\ge 2$ over $k$. The following conditions for $B$ are equivalent.
\begin{enumerate}
\item $B\cong_kk[\Delta ]^{[m]}$ for some trinomial data $\Delta$ over $k$ and some $m\in\N$. 
\smallskip
\item $B$ is a unirational UFD which admits an effective unmixed $\Z^{d-1}$-grading with $B_0=k$. 
\smallskip
\item $B$ is an affine UFD which admits an effective unmixed $\Z^{d-1}$-grading with $B_0=k$. 
\smallskip
\item $B$ is a unirational UFD with $B^*=k^*$ which admits an effective $\Z^{d-1}$-grading with $B_0=k$.
\smallskip
\item $B$ is an affine UFD with $B^*=k^*$ which admits an effective $\Z^{d-1}$-grading with $B_0=k$.
\end{enumerate}
\end{theorem}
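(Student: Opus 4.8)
The plan is to anchor all five conditions to the explicit rings in (1), proving (1)$\iff$(2) and (1)$\iff$(3) and reducing everything else to a units lemma. First I would dispose of the passage between the ``unmixed'' and ``$B^*=k^*$'' formulations, i.e.\ (2)$\iff$(4) and (3)$\iff$(5). The key is that in a $\Z^{d-1}$-graded integral domain every unit is homogeneous: choosing a group homomorphism $\Z^{d-1}\to\Z$ injective on the finite supports of $u$ and $u^{-1}$ reduces to the $\Z$-graded case, where comparison of top and bottom degrees forces a unit to be a single homogeneous component. Granting this and $B_0=k$, a nonconstant homogeneous unit $a\in B_g$ forces $g\neq 0$ with $g,-g$ in the weight monoid $M$, i.e.\ a nontrivial element of the maximal subgroup of $M$; conversely such a $g$ gives $a\in B_g$, $b\in B_{-g}$ with $ab\in k^*$, hence a nonconstant unit. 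Thus, under $B_0=k$, the grading is unmixed if and only if $B^*=k^*$, which is exactly what distinguishes (2) from (4) and (3) from (5). It therefore suffices to prove (1)$\iff$(2) and (1)$\iff$(3).

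For (1)$\Rightarrow$(2) and (1)$\Rightarrow$(3), Theorem~\ref{DFN} already makes $k[\Delta]$, and hence $B=k[\Delta]^{[m]}$, an affine rational (so unirational) UFD of dimension $d=n-r+1+m$; what remains is to equip it with an effective unmixed $\Z^{d-1}$-grading with $B_0=k$. I would build this directly, assigning weights $w(t_{ij})\in\Z^{n-r}$ so that the $r+1$ monomials $T_0^{\beta_0},\dots,T_r^{\beta_r}$ receive one common weight; this makes every trinomial relation homogeneous and descends the grading to $k[\Delta]$, and grading the $m$ extra variables in further independent degrees raises the rank to $n-r+m=d-1$. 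The coprimality hypotheses $\gcd(d_i,d_j)=1$ are precisely what force the weight monoid to generate $\Z^{d-1}$ with trivial maximal subgroup, and the same numerics give $B_0=k$; this recovers and extends the grading of \cite{Hausen.Herppich.Suss.11}.

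The substance is the reverse direction, the classification statements (2)$\Rightarrow$(1) and (3)$\Rightarrow$(1). I would read the grading as an action of $T=(k^*)^{d-1}$ on $X=\mathrm{Spec}\,B$ of complexity $d-(d-1)=1$ with trivial invariants, since $B_0=k$. The degree-zero subfield $K_0\subset\mathrm{Frac}(B)$ then has $\mathrm{trdeg}_kK_0=1$, and it is rational --- in case (2) because unirationality and L\"uroth's theorem give $K_0=k(w)$, and in case (3) because factoriality forces $\mathrm{Cl}(B)=0$ and hence genus zero --- so the one-dimensional quotient is $\PP^1$. Because $B$ is a UFD, its height-one homogeneous primes organize into finitely many vertical families $\{t_{i1},\dots,t_{in_i}\}$, one over each of finitely many special points $P_0,\dots,P_r\in\PP^1$, with multiplicities $\beta_{ij}$ dictated by the fibers. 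Linear equivalence of any two points of $\PP^1$ then yields, for each $i\ge 2$, a single relation among $T_0^{\beta_0},T_1^{\beta_1},T_i^{\beta_i}$, which after normalizing $P_0,P_1$ to $0,\infty$ takes the shape $T_0^{\beta_0}+\lambda_iT_1^{\beta_1}+T_i^{\beta_i}=0$ with $\lambda_i$ the coordinate of $P_i$; the remaining free directions account for the polynomial factor, i.e.\ the exponent $m$. Effectiveness and unmixedness translate into $\beta_i\in\Z_+^{n_i}$ and $\gcd(d_i,d_j)=1$ for $i\neq j$, so the assembled data are genuine trinomial data $\Delta$ with $B\cong_kk[\Delta]^{[m]}$.

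The main obstacle I anticipate is making this rigorous in arbitrary characteristic and, for (2), under unirationality alone rather than affineness. The characteristic-zero classification of \cite{Hausen.Herppich.Suss.11} assumes affineness throughout and relies on locally nilpotent derivations; here one must instead show that a unirational UFD carrying such a grading is automatically finitely generated with $\PP^1$-quotient --- so that the combinatorial data is finite and $B$ is in fact affine --- and then carry the identification of the fiber components and of the trinomial relations through positive characteristic, where $\G_a$-actions and iterative higher derivations must replace the classical derivation calculus. The crux on which the whole classification rests is that factoriality together with trivial invariants pins the quotient to $\PP^1$ with exactly the right configuration of degenerate fibers, uniformly in the characteristic.
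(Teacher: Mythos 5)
Your reduction of (4) and (5) to (2) and (3) is correct and matches the paper: every unit in a $\Z^{d-1}$-graded domain is homogeneous, and under $B_0=k$ this makes ``unmixed'' equivalent to ``$B^*=k^*$'' (the paper records the two directions as {\it Proposition\,\ref{Laurent}(c)} and {\it Proposition\,\ref{degree}(e)}). The implication (1)$\Rightarrow$(2)--(5) via {\it Theorem\,\ref{DFN}} plus an explicit grading is also the paper's route ({\it Proposition\,\ref{tri-data}}, which defers the grading to the construction of Hausen--Herppich--S\"uss). The problem is the central implication (2)$\Rightarrow$(1), where your proposal is a sketch of the geometric HHS-style argument and the two hardest steps are not proved but explicitly deferred as ``obstacles I anticipate.''

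Concretely, two gaps. First, under condition (2) the ring $B$ is only assumed unirational, not affine, so before any talk of fibers over $\PP^1$ you must prove $B$ is finitely generated. This is not something one can ``show instead'' in passing: the paper imports it as a theorem of Knop ({\it Theorem\,\ref{Knop}}), which applies because the grading is a complexity-one torus action on a normal unirational variety. Your proposal names the need but supplies neither the citation nor an argument. Second, the actual derivation of the trinomial relations --- that the homogeneous height-one primes fall into $r+1$ families, that each family contributes a monomial $T_i^{\beta_i}$, and that ``linear equivalence of points of $\PP^1$'' produces exactly the relations $T_0^{\beta_0}+\lambda_iT_1^{\beta_1}+T_i^{\beta_i}=0$ with the coprimality conditions $\gcd(d_i,d_j)=1$ --- is the entire content of the classification, and it is only asserted. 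The existing proofs of this step (Mori, Ishida, HHS) are characteristic-zero and/or low-dimensional, and you acknowledge that transporting them to arbitrary characteristic is unresolved; replacing derivations by iterative higher derivations is not obviously the right fix, since the HHS argument is not primarily an LND argument. The paper avoids all of this with a different, purely algebraic device: it builds a complete homogeneous signature sequence of pairwise nonassociate primes ({\it Section 3}), uses $K_0\cong k^{(1)}$ together with factorization of binary forms over the algebraically closed field $k$ (Claims 1 and 2 in the proof of {\it Theorem\,\ref{classify}}) to show that any two degree-$\delta$ monomials in the generators are related by an element of $GL_2(k)$, and reads off the trinomial relations from that. Until you either reproduce that mechanism or genuinely carry out the divisor-theoretic argument in all characteristics, the proof of (2)$\Rightarrow$(1) is missing.
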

{\it Theorem\,\ref{DFN}} combined with {\it Proposition\,\ref{tri-data}} shows that if condition (1) holds, then conditions (2)-(5) also hold. 
Our main result, {\it Theorem\,\ref{classify}}, shows that, if condition (2) holds, then (1) holds; the theory of signature sequences is developed in {\it Section 3} to prove this theorem. 
{\it Corollary\,\ref{affineUFD}} shows that, if condition (3) holds, then condition (2) holds. 
To complete the cycle of equivalences, {\it Proposition\,\ref{Laurent}(c)} shows that condition (4) implies condition (2), and that condition (5) implies condition (3).

In \cite{Mori.77}, Mori classified rings satisfying condition (3) in the case $d=2$. 
These rings are of the form $B=k[\Delta ]$ for trinomial data $\Delta$ using the unit partition $n=1+\cdots +1$ for some integer $n\ge 2$. 
In {\it Section 6.1} we recover Mori's description from {\it Theorem\,\ref{main}}. 
In particular, we show that $k[\Delta ]\cong_kk[\Delta^{\prime}]$ for trinomial data $\Delta^{\prime}$ in Mori form (see {\it Section 6} for the definition), and that 
when $k$ is of characteristic zero, the presentation of $B$ by trinomial data in Mori form is unique,
i.e., if $k[\Delta ]\cong_kk[\Delta^{\prime}]$ and $\Delta ,\Delta^{\prime}$ are in Mori form, then $\Delta=\Delta^{\prime}$ ({\it Theorem\,\ref{non-iso}}). 

In \cite{Ishida.77}, Ishida classified rings satisfying condition (3) in the case $d=3$. 
These rings are of the form $B=k[\Delta ]$ for trinomial data $\Delta$ using a partition of the form $n=1+\cdots +1+2$ for some integer $n\ge 2$. 
In {\it Section 6.2} we recover Ishida's description from {\it Theorem\,\ref{main}}. 

In \cite{Hausen.Herppich.Suss.11}, Hausen, Herppich and S\"uss classified rings satisfying condition (3) in the case $k$ is of 
characteristic zero.\footnote{The cited paper does not include the unmixed hypothesis in its assumptions, 
but it is clear that this is a tacit assumption without which the paper's main results would not be correct.} 
They define data $(A,\mathfrak{n},L)$ which they call an admissible triple. 
The associated ring $R(A,\mathfrak{n},L)$ is a normal affine domain of dimension $d$ with an effective $\Z^{d-1}$-grading with $B_0=k$. 
Theorem 1.9 specifies which of these are UFDs and shows that all rings satsifying condition (3) are of this form. Subsequently, Wrobel \cite{Wrobel.20} described the divisor class group of $R(A,\mathfrak{n},L)$.

Note that condition (2) does not assume that the ring $B$ is affine. One source of interest in these rings comes from invariant theory for unipotent groups acting on affine $k$-varieties, where rings of invariants are not, in general, noetherian. 
Applications of {\it Theorem\,\ref{main}} to unipotent actions are given in {\it Section\,\ref{apps}}. In \cite{Knop.93}, Knop showed that a normal unirational variety over an algebraically closed field which admits a reductive group action of complexity one must be an affine variety. Our proof of {\it Theorem\,\ref{main}} relies on Knop's result. 

When $k$ is algebraically closed, {\it Corollary\,\ref{smooth}} shows that the only smooth varieties among those of the form $X={\rm Spec}(k[\Delta ])$ for trinomial data $\Delta$ are the affine spaces. 

\medskip

\noindent {\bf Preliminaries.} For a ground field $k$, a {\bf $k$-domain} $B$ is an integral domain containing $k$. $B^*$ is the group of units of $B$, ${\rm frac}(B)$ is the quotient field of $B$, and for the integer $n\ge 0$, 
$B^{[n]}$ is the polynomial ring in $n$ variables over $B$ and $B^{[\pm n]}$ is the ring of Laurent polynomials in $n$ variables over $B$. When $B$ is a Krull domain, $\dim B$ denotes its Krull dimension. 
If $K$ is a field, then $K^{(n)}$ denotes the field of fractions of $K^{[n]}$. 
Affine $n$-space over $k$ is denoted by $\A^n_k$, $\G_a$ is the additive group of $k$, and $\G_m$ the multiplicative group of units of $k$. 
The {\bf Makar-Limanov invariant} $ML(B)$ of $B$ is the intersection of all invariant rings of $\G_a$-actions on $B$.
$B$ is {\bf rigid} if $ML(B)=B$, and {\bf stably rigid} if $ML(B^{[n]})=B$ for every $n\ge 0$. In case the characteristic of $k$ is 0, ${\rm LND}(B)$ denotes the set of locally nilpotent derivations of $B$. 
If $\mathfrak{g}$ is a grading of $B$ by some abelian group, then ${\rm LND}(B,\mathfrak{g})$ is the subset of $\mathfrak{g}$-homeneous elements of ${\rm LND}(B)$.
See \cite{Freudenburg.17} for details about locally nilpotent derivations and $\G_a$-actions. 

\medskip

\noindent {\bf Acknowledgments.} The authors wish to thank Dayan Liu and Xiaosong Sun of Jilin University, Daniel Daigle of the University of Ottawa, Ivan Arzhantsev of the Higher School of Economics, Moscow, and Adrien Dubouloz of the Universite de Bourgogne for their helpful comments regarding this paper. 

The work of the second author was supported by JSPS KAKENHI Grant Number JP21K13782.

\section{Degree Functions and Gradings}\label{degree+grading}

\subsection{Monoids} This paper considers gradings by the groups $G=\Z^n$ for $n\ge 0$, and we restrict our attention to submonoids $M$ of these groups. As such, $M$ is both cancelative and torsion-free. We say that $M$ is {\bf unmixed} if its maximal subgroup $\{ g\in M\, |\, -g\in M\}\subset M$ is trivial; see \cite{Kambayashi.Russell.82}, 1.5. 
Note that any submonoid of an unmixed monoid is unmixed. 

Recall that $(G,<)$ is a {\bf totally ordered abelian group} if $<$ is a total order of $G$ such that:
\begin{center}
For all $x,y,z\in G$, $x+z< y+z$ implies $x< y$.
\end{center}
$G$ always admits such an order, for example, lexicographical order relative to a $\Z$-basis of $G$. 
Define the monoid $G_+=\{ g\in G\, |\, 0\le g\}$, which is unmixed.

\begin{proposition}\label{unmixed-monoid} Let $G=\Z^n$ and $M\subset G$ a finitely generated submonoid. The following conditions for $M$ are equivalent.
\begin{enumerate}
\item $M$ is unmixed. 
\item There exists a $\Z$-basis $\{u_1,\hdots ,u_n\}$ of $G$ such that $M\subset \N u_1\oplus\cdots\oplus\N u_n$.
\item There exists a total order $<$ on $G$ such that $(G,<)$ is a totally ordered abelian group and $(M,<)$ is a well-ordered set.  
\item There exists a total order $<$ on $G$ such that $(G,<)$ is a totally ordered abelian group and $M\subset G_+$. 
\end{enumerate}
\end{proposition}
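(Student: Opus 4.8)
The plan is to prove the cycle of implications $(1)\Rightarrow(2)\Rightarrow(4)\Rightarrow(3)\Rightarrow(1)$, which links all four conditions. Three of these arrows are short, and the real content sits in $(1)\Rightarrow(2)$.

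For the short arrows I would argue as follows. For $(2)\Rightarrow(4)$, equip $G$ with the lexicographic order relative to coordinates in the basis $u_1,\dots,u_n$; this is a translation-invariant total order, and every nonzero element of $\N u_1\oplus\cdots\oplus\N u_n$ has positive leading coordinate, so $M\subset G_+$. For $(4)\Rightarrow(3)$ finite generation enters: writing $M=\langle g_1,\dots,g_s\rangle$ with all $g_i\ge 0$, a putative infinite strictly descending chain $m_1>m_2>\cdots$ in $M$, expressed as $m_j=\sum_i a_{ji}g_i$ with $a_{ji}\in\N$, yields by Dickson's Lemma indices $j<k$ with $a_{ji}\le a_{ki}$ coordinatewise; then $m_k-m_j=\sum_i(a_{ki}-a_{ji})g_i\ge 0$ contradicts $m_j>m_k$, so $(M,<)$ is well ordered. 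For $(3)\Rightarrow(1)$, if some nonzero $g$ had $-g\in M$ as well, then (taking $g>0$, say) the set $\{-ng:n\in\N\}\subset M$ would be an infinite strictly descending chain, contradicting well-ordering.

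The main step is $(1)\Rightarrow(2)$, which I would phrase geometrically. First I would reduce to the full-dimensional case: replacing $G$ by the saturation $\tilde G$ of the subgroup $\langle M\rangle$ (a primitive, hence split, subgroup), one may assume the real cone $C=\mathrm{cone}(M)\subset G\otimes\R$ is full dimensional, since any basis of $\tilde G$ adapted to $M$ extends to a basis of $G$. Next I would show that unmixedness of $M$ forces $C$ to be pointed: an element $v\ne 0$ with $v,-v\in C$ produces a nontrivial nonnegative real relation $\sum c_i m_i=0$ among generators, which by rationality of its solution cone may be taken with coefficients $a_i\in\N$, not all zero; isolating one generator $m_1$ with $a_1>0$ exhibits $a_1m_1\in M$ and $-a_1m_1=\sum_{i\ge 2}a_im_i\in M$, a nonzero pair contradicting unmixedness.

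The crux is then purely convex-geometric: every full-dimensional pointed rational cone $C\subset\R^n$ is contained in a \emph{unimodular} cone, i.e.\ one spanned by a $\Z$-basis. I would prove the dual statement — that the full-dimensional cone $C^\vee$ contains a $\Z$-basis — by choosing a primitive lattice vector $u_1^*$ in the interior of $C^\vee$, extending it to a basis $u_1^*,\dots,u_n^*$ of $\Z^n$, and replacing each $u_i^*$ ($i\ge 2$) by $u_i^*+Nu_1^*$ for $N$ large; this is a shear (hence unimodular) change of basis, and since $u_1^*$ is interior it drags every vector into $C^\vee$. Dualizing, the dual basis $u_1,\dots,u_n$ spans a cone containing $C=(C^\vee)^\vee$, so $M\subset C\cap\Z^n=\N u_1\oplus\cdots\oplus\N u_n$. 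I expect this lattice-basis-in-a-cone claim to be the main obstacle: producing merely linearly independent lattice points in a solid cone is routine, but forcing the determinant to be $\pm 1$ is not, and the shearing trick toward an interior primitive vector is the device that achieves it.
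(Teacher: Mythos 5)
Your proof is correct, but it takes a genuinely different route from the paper's in two respects. First, the paper runs the cycle $(1)\Rightarrow(2)\Rightarrow(3)\Rightarrow(4)\Rightarrow(1)$: for $(2)\Rightarrow(3)$ it simply observes that $\N u_1\oplus\cdots\oplus\N u_n$ is itself well-ordered under lexicographic order, and for $(3)\Rightarrow(4)$ it uses the one-line argument that $\mu=\min M\le 0$ would give $2\mu\le\mu$ and $\mu\le 2\mu$, hence $\mu=0$. Your route $(2)\Rightarrow(4)\Rightarrow(3)\Rightarrow(1)$ is equally valid, but your $(4)\Rightarrow(3)$ leans on the finite-generation hypothesis via Dickson's Lemma, where the paper's corresponding arrows do not; this is harmless here since $M$ is assumed finitely generated. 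Second, and more substantially, the paper does not prove $(1)\Rightarrow(2)$ at all --- it cites Lemma 1.6 of Kambayashi--Russell --- whereas you supply a complete convex-geometric proof (reduction to the full-dimensional case via the saturation of $\langle M\rangle$, unmixedness forcing the cone to be pointed, and the shearing construction of a $\Z$-basis in the interior of the dual cone). That argument is sound and is the real content of your write-up; the only cosmetic repair needed is in the pointedness step, where you should first discard any zero generators (or choose the index $i$ with $a_i>0$ \emph{and} $m_i\ne 0$) so that $a_1m_1\ne 0$ and the contradiction with unmixedness actually applies. In short, your approach buys self-containedness of the key implication at the cost of more machinery; the paper's buys brevity by outsourcing that step to the literature.
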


\begin{proof} That condition (1) implies condition (2) is \cite{Kambayashi.Russell.82}, Lemma 1.6.

Assume that condition (2) holds and let $\{ u_1,\hdots ,u_n\}$ be a $\Z$-basis of $G$ such that $M\subset \bigoplus_{1\le i\le n}\N u_i$. Let $<$ be lexicographical order for $G$ relative to this basis. Then 
$(G,<)$ is a totally ordered abelian group and $\N u_1\oplus\cdots\oplus\N u_n$ is a well-ordered set for $<$. Since $M\subset \N u_1\oplus\cdots\oplus\N u_n$, we see that
$(M,<)$ is a well-ordered set. So condition (3) holds. 

Assume that condition (3) holds. Let $<$ be a total order on $G$ such that $(G,<)$ is a totally ordered abelian group and $(M,<)$ is a well-ordered set. Let $\mu =\min M$. If $\mu\le 0$ then 
$2\mu =\mu +\mu\le \mu+0=\mu$. Since $2\mu\in M$ we also have $\mu\le 2\mu$. So $2\mu =\mu$, which gives $\mu=0$. Therefore, $M\subset G_+$ and condition (4) holds.

Assume that condition (4) holds. Let $<$ be a total order on $G$ such that $(G,<)$ is a totally ordered abelian group and $M\subset G_+$. Since $G_+$ is unmixed, $M$ is also unmixed. So condition (1) holds.
\end{proof}

The reader should note that the total order in conditions (3) and (4) of this proposition is not an arbitrary total order on $G$. For example, 
if $G\cong\Z^n$ and $(G,<)$ is a totally ordered abelian group, then $\{ -g\, |\, g\in G_+\}$ is an unmixed monoid having no least element. 

\subsection{Degree Functions} 
Let $(G,<)$ be a totally ordered abelian group for $G\cong\Z^n$ and let $B$ be a $k$-domain with degree function $\deg :B\to G\cup\{ -\infty\}$. 
{\it We will always assume that nonzero elements of the ground field $k$ are of degree 0.} The set
\[
 \deg (B)=\{ \deg (b)\, |\, b\in B\, ,\, b\ne 0\}\subset G
 \]
is a monoid, called the {\bf degree monoid} for $\deg$. The induced filtration of $B$ is
\[
B=\bigcup_{g\in G}\mathcal{F}_g
\]
where the sets $\mathcal{F}_g=\{ b\in B\, |\, \deg b\le g\}$ are the associated {\bf degree modules}. The associated {\bf degree submodules} are:
\[
\mathcal{V}_g=\{ f\in B\,\vert\, \deg f<g\}\subset\mathcal{F}_g
\]
Each degree module $\mathcal{F}_g$ is a $k$-vector space, and the associated degree submodule $\mathcal{V}_g$ is a subspace of $\mathcal{F}_g$. 
The degree function $\deg$ can be extended to $K={\rm frac}(B)$ by letting $\deg (f/g)=\deg f-\deg g$ for $f,g\in B$, $g\ne 0$. 
Note that, if $B$ is a field, then $\deg$ is a degree function on $B$ if and only if $(-\deg)$ is a discrete valuation of $B$. 

Recall that a subalgebra $A\subset B$ is {\bf factorially closed} in $B$ if $rs\in A$ for nonzero $r,s\in B$ implies $r\in A$ and $s\in A$.

\begin{proposition}\label{degree} With the assumptions and notation above:
\begin{itemize}
\item [{\bf (a)}] $\mathcal{F}_0$ is a subring of $B$ which is integrally closed in $B$.
\item [{\bf (b)}] $\mathcal{F}_d$ is an ideal of $\mathcal{F}_0$ for each $d\le 0$. 
\item [{\bf (c)}] If $B$ is a normal ring then $\mathcal{F}_0$ is a normal ring. 
\item [{\bf (d)}] If $B$ is a field then $\mathcal{F}_0$ is a valuation ring of $B$ and ${\rm frac}(\mathcal{F}_0)=B$. 
\item [{\bf (e)}] If $\deg (B)\subset G_+$ then $\mathcal{F}_0$ is factorially closed in $B$ and $B^*\subset\mathcal{F}_0$. 
\item [{\bf (f)}]  If $B$ is a UFD and $\deg (B)\subset G_+$, then $\mathcal{F}_0$ is a UFD. 
\end{itemize}
\end{proposition}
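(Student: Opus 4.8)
The plan is to handle the six parts in an order that lets later parts reuse earlier ones, relying throughout on the two defining properties of a degree function, namely $\deg(fg)=\deg f+\deg g$ and $\deg(f+g)\le\max\{\deg f,\deg g\}$, together with the standard consequence in the totally ordered setting that this last inequality is an equality whenever $\deg f\ne\deg g$, and that $\deg 1=0$ since $1\in k^*$.

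For {\bf (a)}, closure of $\mathcal{F}_0$ under addition and multiplication is immediate from these properties, so $\mathcal{F}_0$ is a subring. To see it is integrally closed in $B$, I take $b\in B$ satisfying a monic relation $b^m+a_{m-1}b^{m-1}+\cdots+a_0=0$ with $a_i\in\mathcal{F}_0$ and argue by contradiction: if $\deg b>0$, then $\deg(a_ib^i)\le i\deg b<m\deg b=\deg(b^m)$ for every $i<m$, so the leading term strictly dominates and the left side has degree $m\deg b>-\infty$, which is impossible. Hence $\deg b\le 0$. Part {\bf (b)} is a direct computation: $\mathcal{F}_d$ is closed under addition and negation by the ultrametric property, and $\deg(ab)=\deg a+\deg b\le d$ for $a\in\mathcal{F}_0$, $b\in\mathcal{F}_d$, so $\mathcal{F}_d$ is an ideal of $\mathcal{F}_0$.

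Part {\bf (c)} reduces cleanly to {\bf (a)}: any $x\in{\rm frac}(\mathcal{F}_0)$ lies in $K={\rm frac}(B)$, and if $x$ is integral over $\mathcal{F}_0$ it is integral over $B$; since $B$ is normal, $x\in B$, and then $x\in\mathcal{F}_0$ by the integral closedness established in {\bf (a)}. For {\bf (d)} I invoke the remark preceding the proposition: when $B$ is a field, $v=-\deg$ is a valuation and $\mathcal{F}_0=\{b\in B:v(b)\ge 0\}$ is precisely its valuation ring; since for any $x\in B^*$ either $\deg x\le 0$ or $\deg(x^{-1})=-\deg x<0$, one of $x,x^{-1}$ lies in $\mathcal{F}_0$, giving both the valuation-ring property and ${\rm frac}(\mathcal{F}_0)=B$. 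For {\bf (e)}, the hypothesis $\deg(B)\subset G_+$ forces every nonzero element to have nonnegative degree; if $rs\in\mathcal{F}_0$ with $r,s\ne 0$, then $rs\ne 0$ has degree both $\le 0$ and $\ge 0$, so $\deg r+\deg s=\deg(rs)=0$ with $\deg r,\deg s\ge 0$, and in a totally ordered group a sum of two nonnegative elements vanishes only if both vanish, whence $r,s\in\mathcal{F}_0$. Applying this to $1=u\cdot u^{-1}$ gives $B^*\subset\mathcal{F}_0$.

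The substance of the proposition is {\bf (f)}, and the key is to combine the factorial closedness from {\bf (e)} with the general fact that a factorially closed subring $A$ of a UFD $B$ is itself a UFD. I would prove this fact directly in three steps. First, $A^*=A\cap B^*$: if $u\in A$ is a unit of $B$, then $u\cdot u^{-1}=1\in A$ forces $u^{-1}\in A$. Second, given a nonzero nonunit $a\in A$, its prime factorization $a=p_1\cdots p_m$ in $B$ has all $p_i\in A$, obtained by repeatedly applying factorial closedness to split off one factor at a time; each $p_i$ remains a nonunit in $A$ by the first step. Third, each such $p_i$ is prime in $A$: if $p_i$ divides $bc$ in $A$, then $p_i$ divides $b$ (say) in $B$, and writing $b=p_ie$ with $e\in B$, factorial closedness puts $e\in A$, so $p_i$ divides $b$ in $A$. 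Thus every nonzero nonunit of $\mathcal{F}_0$ is a product of primes, so $\mathcal{F}_0$ is a UFD. The main obstacle is precisely verifying this lemma carefully — in particular checking that units, irreducibility, and primality transfer correctly between $B$ and $\mathcal{F}_0$ — whereas parts {\bf (a)}--{\bf (e)} are essentially formal consequences of the degree axioms and the valuation remark.
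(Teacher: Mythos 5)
Your proof is correct, but it takes a somewhat different route from the paper's. The paper disposes of (a), (c) and (d) in one stroke by extending $\deg$ to $K={\rm frac}(B)$ and observing that $V=\{f\in K\mid \deg f\le 0\}$ is a valuation ring of $K$ with $\mathcal{F}_0=V\cap B$; integral closedness, normality and the valuation-ring statement then all follow from standard properties of valuation rings. You instead prove integral closedness in (a) by a direct leading-term argument on a monic relation (which is valid: when $\deg b>0$ the term $b^m$ strictly dominates, so the sum cannot vanish), deduce (c) from (a) together with normality of $B$, and invoke the valuation picture only for (d). For (f), the paper simply cites the standard fact that a factorially closed subring of a UFD is a UFD (Lemma 2.8 of Freudenburg's book), whereas you prove that lemma from scratch; your three-step argument (units descend, prime factors land in $A$ by factorial closedness, and primality transfers back down) is exactly the content of the cited result and is carried out correctly. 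The trade-off is that the paper's argument is shorter and leans on known machinery, while yours is self-contained and only uses the degree-function axioms; both are sound, and your treatment of (b) and (e) coincides with the paper's.
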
 

\begin{proof} Extend $\deg$ to $K$ and let $V=\{ f\in K\,\vert\, \deg f\le 0\}$. Then $V$ is a valuation ring of $K$, and $\mathcal{F}_0=V\cap B$. This implies parts (a), (c) and (d), and part (b) is clear. 

For part (e), assume that $\deg (B)\subset G_+$. Then $\mathcal{F}_0=B_0$ and, since $\deg (B)$ is a submonoid of an unmixed monoid $G_+$, $\deg (B)$ is unmixed. If $ab\in B_0$ for nonzero $a,b\in B$, then $0=\deg (ab)=\deg a+\deg b$ in $\deg (B)$ implies $\deg a=\deg b=0$, so $a,b\in B_0$. 
Therefore, $B_0$ is factorially closed in $B$. 

Part (f) is implied by part (e); see \cite{Freudenburg.17}, Lemma 2.8. 
\end{proof}

We say that $\deg$ is of {\bf finite type} if $\dim_k\mathcal{F}_g<\infty$ for each $g\in G$. 

\begin{lemma}\label{finite-type} If $\deg$ is of finite type, then $\mathcal{F}_g=\{ 0\}$ for each $g<0$ and $\mathcal{F}_0=k$.
\end{lemma}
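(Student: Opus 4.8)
The plan is to establish the two assertions in the order ``$\mathcal{F}_0=k$ first,'' after which the vanishing of $\mathcal{F}_g$ for $g<0$ becomes a one-line consequence. The driving observation is that finite type makes $\mathcal{F}_0$ a \emph{finite-dimensional} $k$-algebra, and a finite-dimensional algebra over an algebraically closed field which happens to be an integral domain can only be the field itself. Thus the whole statement is really a single finiteness phenomenon packaged in two forms.

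First I would note that $\mathcal{F}_0$ is a subring of $B$ by Proposition~\ref{degree}(a), hence an integral domain containing $k$, and that finite type applied to $g=0$ gives $\dim_k\mathcal{F}_0<\infty$. For any nonzero $a\in\mathcal{F}_0$, multiplication by $a$ is a $k$-linear endomorphism of $\mathcal{F}_0$ that is injective (because $B$ is a domain), hence surjective (because $\mathcal{F}_0$ is finite-dimensional); so $a$ is invertible in $\mathcal{F}_0$. Therefore $\mathcal{F}_0$ is a field, each of whose elements is algebraic over $k$, and since $k$ is algebraically closed this forces $\mathcal{F}_0=k$.

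Granting $\mathcal{F}_0=k$, the vanishing statement follows at once: if $g<0$ and $b\in\mathcal{F}_g$, then $\deg b\le g<0\le 0$, so $b\in\mathcal{F}_0=k$; were $b\neq 0$ we would have $\deg b=0$, contradicting $\deg b<0$. Hence $b=0$ and $\mathcal{F}_g=\{0\}$.

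The step I expect to carry the real weight is the appeal to algebraic closedness, and it is genuinely needed rather than cosmetic: for $k=\R$, $B=\C[x]$ with $\deg x=1$ one has $\mathcal{F}_0=\C\neq\R$ although the degree function is of finite type, so in general $\mathcal{F}_0$ can only be asserted to be a finite field extension of $k$. It is also worth recording the conceptual reason finite type is exactly the right hypothesis, which gives an alternative route to the vanishing: the compatibility of $<$ with the group law ensures that a nonzero $b$ of negative degree $g_0$ has powers $b^m$ of strictly decreasing degrees $mg_0\le g_0<0$, all lying in $\mathcal{F}_0$, and since elements of pairwise distinct degrees are $k$-linearly independent (the highest-degree term in any nontrivial $k$-combination cannot cancel), such a $b$ would force $\dim_k\mathcal{F}_0=\infty$. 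This shows that excluding negative degrees and collapsing $\mathcal{F}_0$ to $k$ are two faces of the same finiteness bound.
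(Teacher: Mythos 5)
Your proof is correct when $k$ is algebraically closed, and it takes a genuinely different route from the paper's. The paper proves the two assertions in the opposite order: it first shows $\mathcal{F}_g=\{0\}$ for $g<0$ by observing that $fk[f]\subset\mathcal{F}_g$ forces $k[f]$ to be a finite-dimensional domain, hence a field, so that $f\neq 0$ would give $f^{-1}\in k[f]\subset\mathcal{F}_0$ while $\deg f^{-1}=-\deg f>0$, a contradiction. This is essentially the ``alternative route'' you sketch at the end (the powers of $f$ having pairwise distinct negative degrees), and, like yours, it works over an arbitrary field and does not pass through $\mathcal{F}_0=k$. For the second assertion the paper argues, as you do, that $\mathcal{F}_0$ is a finite-dimensional $k$-subalgebra of a domain and hence a finite field extension of $k$, but then tries to conclude $\mathcal{F}_0=k$ from the fact that $\mathcal{F}_0$ is factorially, hence algebraically, closed in $B$; you instead conclude from algebraic closedness of $k$.

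The hypothesis issue you flag is real and worth dwelling on. The lemma sits in a section where $k$ is an arbitrary field, and your example $k=\R$, $B=\C[x]$ with the $x$-degree satisfies every stated hypothesis while $\mathcal{F}_0=\C\neq\R$. Note that $\C$ is factorially and algebraically closed in $\C[x]$, so the same example shows that the paper's final inference (``factorially closed in $B$, therefore $\mathcal{F}_0=k$'') does not go through as written: closure conditions on $\mathcal{F}_0$ inside $B$ say nothing about the extension $k\subset\mathcal{F}_0$. What is provable over a general field --- and what both arguments actually establish --- is that $\mathcal{F}_g=\{0\}$ for $g<0$ and that $\mathcal{F}_0$ is a finite field extension of $k$; the equality $\mathcal{F}_0=k$ requires $k$ to be algebraically closed in $B$, for instance $k$ algebraically closed as in the paper's main applications. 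So your appeal to algebraic closedness is not a defect of your reasoning but a hypothesis the statement silently requires; your writeup has the virtue of making that dependence explicit, though strictly speaking it proves a corrected version of the lemma rather than the one printed.
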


\begin{proof} Given $f\in\mathcal{F}_g$ for $g<0$, we have:
\[
fk[f]\subset\mathcal{F}_g \implies \dim_k fk[f]<\infty\implies \dim_k k[f]<\infty 
\]
We conclude that $k[f]$ is a field. If $f\ne 0$, then $f\in k[f]^*$. But then
\[
f^{-1}\in k[f]\subset\mathcal{F}_0 \quad {\rm and}\quad \deg f^{-1}>0
\]
which is a contradiction. Therefore, $f=0$ and $\mathcal{F}_g=\{ 0\}$. 

By {\it Proposition\,\ref{degree}(a)}, $\mathcal{F}_0$ is a $k$-subalgebra of $B$, and by hypothesis, it is also a finite-dimensional vector space over $k$. Therefore, $\mathcal{F}_0$ is a finite field extension of $k$. 
By what was shown above, $\deg (B)\subset G_+$. By {\it Proposition\,\ref{degree}(e)}, $\mathcal{F}_0$ is factorially closed, hence algebraically closed, in $B$. Therefore, $\mathcal{F}_0=k$. 
\end{proof}

\subsection{Gradings} 
Let $B$ be a $k$-domain and $G\cong\Z^n$ for some $n\ge 0$, and let 
$\mathfrak{g}$ be a $G$-grading of $B$ given by:
\[
B=\bigoplus_{g\in G}B_g
\]
{\it We will always assume that $k\subset B_0$.}  

Each set $B_g$ is a vector space over $k$ and $B_0$ is a subring. 
Given $b\in B$ we have $b=\sum_{g\in G}b_g$ where $b_g=\pi_g(b)$ for projections $\pi_g :B\to B_g$. 
The {\bf support} of $b\in B$ is the finite set 
\[
{\rm Supp}(b)=\{ g\in G\, |\, b_g\ne 0\}\subset G
\]
which is empty if and only if $b=0$. 
Note that, since $G$ is torsion-free, whenever $fg$ is homogeneous for nonzero $f,g\in B$, we have that $f$ and $g$ are homogeneous. In particular, this means that every $f\in B^*$ is homogeneous. 

Since $B$ is a $k$-domain, the set $M_{\mathfrak{g}}(B)=\{g\in G \ | \ B_g\not=0 \}$ is a submonoid of $G$, 
called the {\bf weight monoid} of $\mathfrak{g}$.
We define the following terms. 
\begin{itemize}
\item [(a)] $\mathfrak{g}$ is of {\bf finite type} if $\dim_kB_g<\infty$ for each $g\in G$.
\item [(b)] $\mathfrak{g}$ is {\bf effective} if $M_{\mathfrak{g}}(B)$ generates $G$ as a group. 
\item [(c)] $\mathfrak{g}$ is {\bf unmixed} if  $M_{\mathfrak{g}}(B)$ is an unmixed monoid. 
\end{itemize}

Any total order $<$ of $G$ defines a degree function $\deg_{\mathfrak{g}}:B\to G\cup\{ -\infty\}$ by $\deg_{\mathfrak{g}}(b)=\max{\rm Supp}(b)$.
Assume that $(G,<)$ is a totally ordered abelian group. 
If $b\in B$ is nonzero, define the {\bf highest-degree homogenous summand} of $b$ by $\bar{b}=b_g$ where $g=\deg_{\mathfrak{g}}(b)$. Observe the following.
\begin{enumerate}
\item $\deg_{\mathfrak{g}}(B)=M_{\mathfrak{g}}(B)$ regardless of the total order used.
\item If $\deg_{\mathfrak{g}}$ is of finite type then $\mathfrak{g}$ is of finite type and unmixed.
\end{enumerate}
Statement (1) and the first part of statement (2) follow from the observation that $B_g\subset\mathcal{F}_g$ for each $g\in G$. The second part of statement (2) follows from {\it Lemma\,\ref{finite-type}}. 
Note that the converse of the first part of statement (2) may fail,
for example, if $B=k[x,x^{-1}]$, the ring of Laurent polynomials with the standard $\Z$-grading, 
then the grading is of finite type, but the associated degree function is not.  

The following result generalizes Lemma 2.10 of \cite{Daigle.Freudenburg.Nagamine.22}. 
\begin{proposition}\label{Laurent}
Let $B$ be a $k$-domain with an effective $\Z^n$-grading for some integer $n\ge 1$.
\begin{itemize}
\item [{\bf (a)}] There exist nonzero homogeneous $f_1,\hdots ,f_n\in B$ and $w_1,\hdots ,w_n\in S^{-1}B$ such that
\[
S^{-1}B=(S^{-1}B)_0[w_1,w_1^{-1},\hdots ,w_n,w_n^{-1}]\cong (S^{-1}B)_0^{[\pm n]}
\]
where $S\subset B$ is the multiplicatively closed set generated by $f_1,\hdots ,f_n$.
\item [{\bf (b)}] Let $K$ be the field:
\[
 K=\{ u/v\in {\rm frac}(B) \,\vert\, u,v\in B_g , v\ne 0, g\in G\}
 \]
 Then $K={\rm frac}((S^{-1}B)_0)$ and ${\rm frac}(B)\cong_K K^{(n)}$.
 \item [{\bf (c)}] If $B_0$ is a field and $B^*=B_0^*$ then the grading is unmixed. 
 \end{itemize}
\end{proposition}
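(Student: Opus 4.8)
The plan is to prove the contrapositive: assuming the grading is mixed, I will produce a unit of $B$ that does not lie in $B_0^*$, contradicting $B^*=B_0^*$. Since the grading is effective with $B_0$ a field, this will force the grading to be unmixed.

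First I would unwind the hypothesis that the grading is \emph{mixed}, i.e.\ that the weight monoid $M=M_{\mathfrak{g}}(B)$ has a nontrivial maximal subgroup. This means there exists a nonzero $g\in G$ with both $g\in M$ and $-g\in M$, so there are nonzero homogeneous elements $f\in B_g$ and $h\in B_{-g}$. The product $fh$ is then homogeneous of degree $0$, hence $fh\in B_0$, and since $B_0$ is a field and $f,h$ are nonzero, $fh\in B_0^*$. Next I would use this to show $f$ is a unit: from $fh=c$ with $c\in B_0^*\subset B^*$ we get $f\cdot(c^{-1}h)=1$, so $f\in B^*$. But $f$ is homogeneous of degree $g\ne 0$, so $f\notin B_0$ (recall $f$ nonzero homogeneous of nonzero degree cannot lie in $B_0=\bigoplus$ the degree-$0$ piece). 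Therefore $f\in B^*\setminus B_0^*$, contradicting the hypothesis $B^*=B_0^*$.

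The only subtlety to check carefully is the identification $B_0^* \subsetneq B^*$ being impossible: I want to make sure the argument uses that $B$ is a $k$-domain with a $G$-grading by a torsion-free group, so that homogeneity of units is automatic (as noted in the text, every $f\in B^*$ is homogeneous since $G$ is torsion-free). This guarantees there is no subtlety about inhomogeneous units slipping into the picture; the unit $f$ we constructed is genuinely homogeneous of nonzero degree, so the contradiction is clean. I expect no serious obstacle here: the main step is simply recognizing that a nontrivial maximal subgroup of $M$ yields a homogeneous unit of nonzero degree, which is exactly what the equality $B^*=B_0^*$ forbids. The argument does not even require part (a) or (b); it is a direct consequence of the definition of \emph{unmixed} together with $B_0$ being a field.
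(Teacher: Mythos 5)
Your argument for part (c) is correct and is essentially the paper's own proof: the paper likewise takes a nonzero $g$ in the maximal subgroup of the weight monoid, picks nonzero $p\in B_g$ and $q\in B_{-g}$, observes $pq\in B_0\setminus\{0\}\subset B^*$ (using that $B_0$ is a field), concludes $p\in B^*=B_0^*$, and derives a contradiction from $\deg p=g\ne 0$. Your added remark that units are automatically homogeneous because $G$ is torsion-free is consistent with the paper's conventions, and you are right that (c) needs neither (a) nor (b).

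However, the statement under review is the full proposition, and your proposal addresses only part (c). Parts (a) and (b) are not touched: they require choosing, for each basis vector $u_i$ of $\Z^n$, homogeneous $a_i,b_i$ with $\deg a_i-\deg b_i=u_i$ (possible by effectiveness), setting $f_i=a_ib_i$ and $w_i=a_i/b_i$, and then verifying that $S^{-1}B=(S^{-1}B)_0[w_1^{\pm 1},\hdots,w_n^{\pm 1}]$ is a Laurent polynomial ring over $(S^{-1}B)_0$ and that $K={\rm frac}((S^{-1}B)_0)$. That construction is the substantive content of the proposition and is missing from your write-up.
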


\begin{proof} Let $\{ u_1,\hdots ,u_n\}$ be a $\Z$-basis of $G:=\Z^n$. 
Since the grading is effective, for each $1\le i\le n$, there exist nonzero homogeneous $a_i,b_i\in B$ such that $\deg(a_i)-\deg(b_i)=u_i$. Define $f_i\in B$ by $f_i=a_ib_i$, $1\le i\le n$, and let $S\subset B$ be the multiplicatively closed set generated by $f_1,\hdots ,f_n$. Since $S$ is generated by homogeneous elements, the $\Z^n$-grading of $B$ extends to a $\Z^n$-grading of $S^{-1}B$. Define $w_i\in (S^{-1}B)^*$ by $w_i=a_i/b_i$, noting that $w_i$ is homogeneous of degree $u_i$.

Given nonzero homogeneous $x\in S^{-1}B$, write $\deg (x)=\sum_{i=1}^ne_iu_i$ for $e_i\in\Z$. Then:
\[
\deg (xw_1^{-e_1}\cdots w_n^{-e_n})=0 \implies x\in (S^{-1}B)_0[w_1,w_1^{-1},\hdots ,w_n,w_n^{-1}]
\]
Therefore, $S^{-1}B=(S^{-1}B)_0[w_1,w_1^{-1},\hdots ,w_n,w_n^{-1}]$. Since $\deg (w_1),\hdots ,\deg (w_n)$ are $\Z$-linearly independent in $G$, it follows that:
\[
S^{-1}B=(S^{-1}B)_0[w_1,w_1^{-1},\hdots ,w_n,w_n^{-1}]\cong (S^{-1}B)_0^{[\pm n]}
\]
This proves part (a).

For part (b), note that part (a) implies ${\rm frac}(B)=L(w_1,\hdots ,w_n)$ where $L={\rm frac}((S^{-1}B)_0)$. Since $(S^{-1}B)_0\subset K$ we have $L\subset K$. Conversely, given $g\in G$, write $g=e_1u_1+\cdots +e_nu_n$ for integers $e_i$.  
Given nonzero $u\in B_g$ define $u'\in (S^{-1}B)_0$ by:
\[
u'=\frac{u(b_1^{e_1}\cdots b_n^{e_n})^2}{f_1^{e_1}\cdots f_n^{e_n}}
\]
Then for nonzero $u,v\in B_g$ we see that $u/v=u'/v'\in {\rm frac}((S^{-1}B)_0)$ and $K\subset L$.

For part (c), let $H\subseteq G$ be the maximal subgroup of $\deg (G)$. Assume that there exists nonzero $g\in H$. Then there exist nonzero $p\in B_g$ and $q\in B_{-g}$. 
But then $pq\in B_0\setminus\{ 0\}\subset B^*$ implies $p\in B^*=B_0^*$, which gives a contradiction. Therefore, $H=\{ 0\}$. 
\end{proof}

\begin{corollary}\label{unirational}  Let $B$ be a unirational $k$-domain of finite transcendence degree $d\ge 1$ over $k$ with an effective $\Z^n$-grading for some $n\le d$ and let $K$ be as in {\it Proposition\,\ref{Laurent}}.
If either (i) $n=d-1$, or (ii) $n=d-2$ and $k$ is algebraically closed of characteristic zero, then $K\cong_kk^{(d-n)}$ and $B$ is rational over $k$.
\end{corollary}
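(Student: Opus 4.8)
The plan is to combine the structural description of ${\rm frac}(B)$ in \emph{Proposition\,\ref{Laurent}(b)} with classical rationality results for function fields of small transcendence degree. First I would record that, by \emph{Proposition\,\ref{Laurent}(b)}, ${\rm frac}(B)\cong_KK^{(n)}$, which is purely transcendental of degree $n$ over $K$. Taking transcendence degrees over $k$ gives
\[
d={\rm trdeg}_k{\rm frac}(B)={\rm trdeg}_kK+n,
\]
so ${\rm trdeg}_kK=d-n$; this equals $1$ in case (i) and $2$ in case (ii). Moreover, since $K={\rm frac}((S^{-1}B)_0)\subseteq{\rm frac}(B)$ and $B$ is unirational over $k$, the field ${\rm frac}(B)$ embeds $k$-linearly into some $k^{(N)}$, and hence so does $K$. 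Therefore $K$ is itself a unirational extension of $k$.

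The key step is to upgrade ``unirational'' to ``rational'' for the field $K$, and this is exactly where the hypotheses on $n$ (and, in case (ii), on $k$) are used. In case (i) the field $K$ has transcendence degree $1$ over $k$ and lies inside a purely transcendental extension; by L\"uroth's theorem (in the form valid for subfields of $k(t_1,\dots,t_N)$ of transcendence degree one over an arbitrary field), $K\cong_kk^{(1)}$. In case (ii) the field $K$ has transcendence degree $2$ over $k$, with $k$ algebraically closed of characteristic zero; by Castelnuovo's rationality criterion, a unirational function field of transcendence degree $2$ over such a field is rational, so $K\cong_kk^{(2)}$. In either case $K\cong_kk^{(d-n)}$. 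This is the main obstacle, not because the argument is long but because it rests on these nontrivial classical theorems; I would emphasize that the restriction to ${\rm trdeg}_kK\le 2$ and to characteristic zero in case (ii) is essential, since rationality of unirational function fields fails both in higher transcendence degree and in positive characteristic.

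Finally I would feed this back into the isomorphism of \emph{Proposition\,\ref{Laurent}(b)}. Since $K\cong_kk^{(d-n)}$, we obtain
\[
{\rm frac}(B)\cong_KK^{(n)}\cong_k\bigl(k^{(d-n)}\bigr)^{(n)}=k^{(d)},
\]
where the last equality simply adjoins the $n$ transcendentals $w_1,\dots,w_n$ to the $d-n$ transcendentals generating $K$ over $k$. Thus ${\rm frac}(B)$ is purely transcendental of degree $d={\rm trdeg}_kB$ over $k$, which is precisely the assertion that $B$ is rational over $k$. In this way both conclusions, $K\cong_kk^{(d-n)}$ and the rationality of $B$, are established simultaneously.
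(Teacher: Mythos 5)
Your proof is correct and follows essentially the same route as the paper: both deduce ${\rm trdeg}_kK=d-n$ from \emph{Proposition\,\ref{Laurent}(b)}, observe that $K$ sits inside a purely transcendental extension of $k$ by unirationality, and then invoke L\"uroth's theorem (case (i)) or Castelnuovo's theorem (case (ii)) before feeding $K\cong_kk^{(d-n)}$ back into ${\rm frac}(B)\cong_KK^{(n)}$. No discrepancies to report.
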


\begin{proof} By {\it Proposition\,\ref{Laurent}}, ${\rm frac}(B)\cong_KK^{(n)}$, 
which implies ${\rm tr.deg}_kK=d-n$. By hypothesis, ${\rm frac}(B)\subset k^{(r)}$ for some $r\ge d$. 

Assume $n=d-1$. Then ${\rm tr.deg}_kK=1$ and L\"uroth's Theorem\index{L\"uroth's Theorem} implies that $K \cong k^{(1)}$.

Assume $n=d-2$ and $k$ is algebraically closed of characteristic zero. Then ${\rm tr.deg}_kK=2$ and Castelnuovo's Theorem \cite{Castelnuovo.1894} implies that $K=k (u,v)\cong k^{(2)}$ for some $u,v\in K$. 

In either case, {\it Proposition\,\ref{Laurent}} implies ${\rm frac}(B)\cong k^{(d)}$. 
\end{proof}

\begin{corollary}\label{affineUFD}  Assume that $k$ is algebraically closed. Let $B$ be an affine UFD of dimension $d\ge 1$ over $k$ with an effective $\Z^{d-1}$-grading. 
Then $B$ is rational over $k$.
\end{corollary}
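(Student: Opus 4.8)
The plan is to reduce the problem to a statement about a curve via the Laurent-polynomial description of \emph{Proposition \ref{Laurent}}, and then to deduce rationality of the associated quotient field from the fact that $B$ is factorial. Let $T=(\G_m)^{d-1}$ be the torus whose character lattice is $\Z^{d-1}$, so that the grading corresponds to an effective action of $T$ on $X={\rm Spec}(B)$ of complexity one. By \emph{Proposition \ref{Laurent}(a)} there are homogeneous $f_1,\hdots ,f_{d-1}\in B$ so that, with $S$ the multiplicative set they generate and $R=(S^{-1}B)_0$, one has $S^{-1}B\cong R^{[\pm(d-1)]}$; and by \emph{Proposition \ref{Laurent}(b)}, $K:={\rm frac}(R)$ satisfies ${\rm frac}(B)\cong_K K^{(d-1)}$, whence ${\rm tr.deg}_kK=1$. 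Thus it suffices to prove $K\cong_k k^{(1)}$, for then ${\rm frac}(B)\cong_k k^{(d)}$. (When $d=1$ the grading is trivial, $R=B$, and the argument below applies verbatim to $B$ itself.) Note that this recovers $K\cong_k k^{(1)}$ directly, i.e.\ without first establishing unirationality as in \emph{Corollary \ref{unirational}}.

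The first task is to show that $R$ is a one-dimensional affine UFD, that is, a PID. First, $S^{-1}B=B[(f_1\cdots f_{d-1})^{-1}]$ is an affine $k$-domain and, being a localization of the UFD $B$, is itself a UFD; in particular it is normal. Since $R=(S^{-1}B)^T$ is the ring of invariants of the reductive group $T$ acting on the affine variety ${\rm Spec}(S^{-1}B)$, it is a finitely generated $k$-algebra, and as the ring of invariants of a normal ring it is normal. From ${\rm tr.deg}_kK=1$ we get $\dim R=1$, so $R$ is a normal affine curve, hence a Dedekind domain. Finally, since $S^{-1}B\cong R^{[\pm(d-1)]}$ is a UFD and ${\rm Cl}(R^{[\pm(d-1)]})\cong{\rm Cl}(R)$ for the Krull domain $R$ (adjoining variables and inverting them does not change the class group), we obtain ${\rm Cl}(R)=0$; a Dedekind domain with trivial class group is a PID.

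It then remains to see that a one-dimensional affine PID $R$ over an algebraically closed field has rational fraction field. Let $C$ be the unique smooth projective curve with $k(C)=K$; then ${\rm Spec}(R)$ is an open subset $C\setminus\{P_1,\hdots ,P_s\}$ for finitely many closed points $P_i$ (with $s\ge 1$, since $R$ is affine, not a field). The excision sequence for divisor class groups gives ${\rm Cl}(C)/\langle [P_1],\hdots ,[P_s]\rangle\cong{\rm Cl}(R)=0$, so ${\rm Cl}(C)={\rm Pic}(C)$ is finitely generated. Since ${\rm Pic}(C)\cong\Z\oplus{\rm Pic}^0(C)$ and, over an algebraically closed field, ${\rm Pic}^0(C)$ is not finitely generated whenever the genus of $C$ is positive, we conclude that $C$ has genus $0$, hence $C\cong\PP^1$ and $K\cong_k k^{(1)}$. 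Combined with the previous paragraph this gives ${\rm frac}(B)\cong_k k^{(d)}$, as desired.

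The routine inputs here—good behavior of localizations and invariant rings of UFDs and normal rings, together with the class-group formula for Laurent extensions—are standard; the one step carrying the real weight is the passage from factoriality to genus $0$. Its force comes from combining finiteness (finite generation of $R$, which rests on finite generation of torus invariants, hence on $B$ being affine) with the non–finite-generation of ${\rm Pic}^0$ of a positive-genus curve over an algebraically closed field. I expect this to be the main point to watch, and in particular ensuring that $R$ is genuinely affine, so that only finitely many points are removed from $C$.
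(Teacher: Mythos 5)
Your proof is correct and follows essentially the same route as the paper: both localize at homogeneous elements to get $S^{-1}B\cong R^{[\pm(d-1)]}$ with $R=(S^{-1}B)_0$ a one-dimensional affine UFD, and then conclude ${\rm frac}(R)\cong k^{(1)}$, hence ${\rm frac}(B)\cong k^{(d)}$. The only difference is that the paper invokes Lemma 2.9 of \cite{Freudenburg.17} for the final step, whereas you prove that statement from scratch via the excision sequence for the class group of the smooth projective model and the non-finite-generation of ${\rm Pic}^0$ in positive genus.
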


\begin{proof}  By {\it Proposition\,\ref{Laurent}}, there exist nonzero homogeneous $f_1,\hdots ,f_{d-1}\in B$ such that, if $S\subset B$ is the multiplicatively closed set generated by $f_1,\hdots ,f_{d-1}$, then 
$S^{-1}B\cong R^{[\pm (d-1)]}$ where $R=(S^{-1}B)_0$. $R$ has the following properties. 
\begin{enumerate}
\item $R$ is a UFD: Since $B$ is a UFD, $S^{-1}B$ is also a UFD. Since $S^{-1}B\cong R^{[\pm (d-1)]}$ we see that $R$ must be a UFD.  
\smallskip
\item $R$ is $k$-affine: $S^{-1}B=B[f_1^{-1},\hdots ,f_{d-1}^{-1}]$ and since $B$ is affine over $k$, $S^{-1}B$ is affine over $k$. Since $S^{-1}B\cong R^{[\pm (d-1)]}$ it follows that $R$ is affine over $k$. 
\smallskip
\item $\dim R=1$: This is clear from $S^{-1}B\cong R^{[\pm (d-1)]}$ and $d=\dim B$. 
\end{enumerate}
By \cite{Freudenburg.17}, Lemma 2.9, any ring satisfying these three conditions over an algebraically closed field is a localization $k[x]_f$ of a polynomial ring $k[x]\cong k^{[1]}$ for some $f\in k[x]$. 
Therefore, ${\rm frac}(R)\cong k(x)\cong k^{(1)}$ and ${\rm frac}(B)\cong k^{(d)}$. 
\end{proof}

This section concludes with the aforementioned theorem of Knop.
\begin{theorem}\label{Knop} \cite{Knop.93} Let $k$ be an algebraically closed field and $G$ a reductive group over $k$. If $X$ is a normal unirational $G$-variety of complexity at most one, 
then the algebra $k[X]$ of regular functions is finitely generated.
\end{theorem}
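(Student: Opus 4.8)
The plan is to pass from the full function ring to its invariants under a maximal unipotent subgroup, reduce the finiteness question to a torus-graded algebra over a base curve, and then use unirationality to force that curve to be rational. Fix a Borel subgroup $B=TU$ of $G$ with maximal torus $T$ and unipotent radical $U$, and set $A=k[X]^U$. The first step is to reduce finite generation of $k[X]$ to that of $A$. Since $G$ is reductive, $U$ is a Grosshans subgroup: the base affine space $G/U$ is quasi-affine with finitely generated coordinate ring. For such a subgroup one has the transfer principle that, for any rational $G$-algebra, $k[X]$ is finitely generated over $k$ if and only if $A=k[X]^U$ is; the substantive direction realizes $k[X]$ inside a finitely generated $G$-algebra assembled from $A$ and the coordinate ring $k[G/U]$. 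It therefore suffices to prove that $A$ is finitely generated.

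Second, I would analyze the torus action. The quotient torus $T=B/U$ acts on $A$, yielding a grading $A=\bigoplus_{\lambda}A_\lambda$ by a finitely generated weight lattice, with $A_0=k[X]^B$ and ${\rm frac}(A_0)=k(X)^B$. The hypothesis that $X$ has complexity at most one means precisely that ${\rm tr.deg}_k\,k(X)^B\le 1$. Here the unirationality enters: since $k(X)$ embeds into a purely transcendental extension of $k$, so does its subfield $k(X)^B$, which is thus a unirational field of transcendence degree at most one; by L\"uroth's Theorem it is rational, so $k(X)^B\cong k$ or $k^{(1)}$. Consequently the rational quotient of $X$ is an open subset of a rational curve, whose smooth completion is $\PP^1$.

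The decisive third step is to realize $A$ as an algebra of global sections over this rational base and to read off finite generation. Following the structure theory of $G$-varieties of complexity one (the Luna--Vust and Timashev theory of colored fansy divisors, together with Knop's theorem that the cone of $G$-invariant valuations is a rational polyhedral cone cut out by the little Weyl group), each graded piece $A_\lambda$ is a module of sections of a divisorial object on $\PP^1$, and $A$ is the associated multigraded section algebra $\bigoplus_\lambda\Gamma\bigl(\PP^1,\mathcal D(\lambda)\bigr)$. Two finiteness inputs then combine: the weight semigroup $\{\lambda\,|\,A_\lambda\ne 0\}$ lies in a rational polyhedral cone, so by Gordan's Lemma it is finitely generated in finitely many weight directions; and along each direction the divisors $\mathcal D(\lambda)$ vary piecewise linearly with only finitely many breaks, supported at finitely many points of $\PP^1$. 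Pushing the conclusion back through the Grosshans transfer principle then yields that $k[X]$ is finitely generated.

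The main obstacle, and the place where both hypotheses are indispensable, is this last step. The difficulty is not the polyhedrality of the weight cone, which is Knop's valuation-cone theorem, but the control of the section algebra over the base curve. A section algebra $\bigoplus_n\Gamma(C,\mathcal O(nD))$ attached to a divisor on a curve of positive genus can fail to be finitely generated---this is the nef-but-not-semiample phenomenon underlying the Nagata-type counterexamples to Hilbert's fourteenth problem already in complexity one, where the base curve is taken to be elliptic. Rationality of the base removes exactly this pathology: on $\PP^1$ the group $\mathrm{Pic}^0$ is trivial, every numerically trivial divisor is trivial, nef divisors are semiample, and the offending graded algebras collapse to finitely generated ones. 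Verifying that unirationality of $X$ descends to the base and that the resulting rationality suffices to tame all the fiberwise divisorial data is the technical heart of the argument.
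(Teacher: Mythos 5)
The paper does not prove this statement: Theorem \ref{Knop} is quoted verbatim from \cite{Knop.93} and used as a black box (its only role in the paper is to guarantee, in the proof of Theorem \ref{classify}, that the graded unirational UFD under consideration is affine). So there is no internal proof to compare yours against; the only meaningful comparison is with Knop's published argument, and your outline does follow its architecture. The reduction to $A=k[X]^U$ via the Grosshans--Popov transfer principle, the identification of complexity with ${\rm tr.deg}_k\,k(X)^B$, the use of unirationality plus L\"uroth to make the rational quotient a rational curve, and the diagnosis that rationality of the base is precisely what excludes the Nagata-type counterexamples over elliptic curves are all genuine ingredients of the theorem, correctly placed.

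The gap is that your third step is a description of what must be proved rather than a proof of it, and that step is the entire technical content of Knop's paper. Specifically, you assert without justification that each weight space $A_\lambda$ is the space of global sections of a divisorial object $\mathcal{D}(\lambda)$ on $\PP^1$, that $\lambda\mapsto\mathcal{D}(\lambda)$ is piecewise linear with finitely many breaks supported at finitely many points, and that the weight monoid lies in a rational polyhedral cone. Each of these requires the structure theory of normal $G$-varieties of complexity one ($G$-invariant valuations, the little Weyl group, and the Luna--Vust/Timashev combinatorics), none of which you establish or precisely cite; normality of $X$, which you never invoke, is essential for the section-ring description. There is also a small imprecision in step two: ${\rm frac}(A_0)$ need not equal $k(X)^B$ in general (only the inclusion ${\rm frac}(A_0)\subseteq k(X)^B$ is automatic), though for the transcendence-degree bound and the L\"uroth application this does not matter. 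As a roadmap to Knop's proof your proposal is sound; as a self-contained proof it is not, because the finiteness mechanism over the rational base curve is exactly the part left unargued.
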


The reader is referred to Knop's article for the general definition of complexity. 
For the purposes of this article, it suffices to know that, when $B$ is a unirational UFD of transcendence degree $n$ over $k$, an effective $\Z^{n-1}$-grading of $B$ corresponds a complexity one action of the torus $T=\G_m^{n-1}$ on the variety defined by $B$, where $B^T=B_0$. 


\section{Signature Sequences}\label{signature}
\subsection{Definition and Basic Properties}
Let $k$ be a field, $B$ a $k$-domain and $(G,<)$ a finitely generated totally ordered abelian group.
In this section, we consider pairs $(B,\deg )$ where 
\[
\deg :B\to G\cup\{ -\infty\}
\]
is a degree function such that $\deg (B)\subset G_+$. 
By {\it Proposition\,\ref{unmixed-monoid}}, $\deg (B)$ is an unmixed monoid, and by {\it Proposition\,\ref{degree}(e)}, 
$\mathcal{F}_0$ is factorially closed in $B$ and $B^*\subset\mathcal{F}_0$. 
Let 
\[
B=\bigcup_{g\in G}\mathcal{F}_g
\]
be the induced filtration of $B$ by degree modules. 
\begin{definition}\label{sig-seq} 
A {\bf signature sequence} $\vec{h}=\{ h_i\}_{i\in I}$ for $(B,\deg )$ is a sequence $h_i\in B$ indexed by an interval $0\in I\subset \N$ such that:
\begin{enumerate}
\item $h_0=1$ and $d_0=\deg h_0=0$.
\item For each $n\in I$ with $n\ge 1$, $h_n\in \mathcal{F}_{d_n}\setminus k[h_1,\hdots ,h_{n-1}]$ where:
\[
d_n=\min\{ g\in G\, |\, \mathcal{F}_g\not\subset k[h_1,\hdots , h_{n-1}]\}
\]
\end{enumerate}
The {\bf length} of $\vec{h}$ is the cardinality of $I$, denoted $\vert\vec{h}\vert$, and $\vec{h}$ is {\bf finite} or {\bf infinite} depending on $\vert\vec{h}\vert$. $\vec{h}$ is 
{\bf complete} if $B=k[\vec{h}]$.
\end{definition}
Note that, for $n\le \vert\vec{h}\vert$, the subsequence $\{ h_0,\hdots ,h_n\}$ is a signature sequence. 
In addition, the degree sequence $\{d_n\}\subset G$ has $d_i\le d_{i+1}$ whenever $i,i+1\in I$. 

\begin{lemma}\label{independent} 
If $\vec{h}=\{ h_i\}_{i\in I}$ is a signature sequence for $(B,\deg )$, then $\{ h_i\}_{i\in I}$ is a linearly independent set over $k$.
\end{lemma}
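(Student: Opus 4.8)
The plan is to argue by contradiction, exploiting the defining property (2) of a signature sequence: that $h_n$ is chosen to lie \emph{outside} the subalgebra $k[h_1,\hdots ,h_{n-1}]$. Recall that linear independence of a (possibly infinite) family means that every finite subfamily is linearly independent, so suppose instead that some nontrivial $k$-linear relation $\sum_i c_ih_i=0$ holds, with $c_i\in k$ not all zero and only finitely many nonzero. Let $n$ be the largest index with $c_n\ne 0$; the whole proof will follow from isolating $h_n$ in this relation.

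The key step is to solve for $h_n$. Because $h_0=1$, the term $c_0h_0$ is simply the constant $c_0\in k$, so rewriting the relation as $h_n=-c_n^{-1}\sum_{i<n}c_ih_i$ exhibits the right-hand side as a $k$-linear combination of $1,h_1,\hdots ,h_{n-1}$, hence as an element of $k[h_1,\hdots ,h_{n-1}]$. I first dispose of the degenerate case $n=0$: a relation supported only at index $0$ reads $c_0\cdot 1=0$, forcing $c_0=0$, contrary to the choice of $c_n$; therefore $n\ge 1$, and condition (2) of \emph{Definition \ref{sig-seq}} applies, giving $h_n\notin k[h_1,\hdots ,h_{n-1}]$. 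This contradicts the membership just displayed, which completes the argument. (For $n=1$ one reads $k[h_1,\hdots ,h_{n-1}]$ as the empty-generator algebra $k$, and the same reasoning shows $h_1\notin k$, matching condition (1) which puts $d_1>0$.)

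I do not anticipate a genuine obstacle here; the only issues requiring care are bookkeeping ones, chiefly the role of $h_0=1$ in absorbing the constant term and the empty-generator convention at $n=1$. It is worth noting why the more obvious alternative—comparing highest-degree homogeneous summands—is not the cleanest route: the degree sequence $\{d_n\}$ is only weakly increasing, since $\mathcal{F}_{d_n}$ need not be absorbed into $k[h_1,\hdots ,h_n]$ after adjoining the single element $h_n$, so distinct $h_i$ may share the same leading degree and a naive leading-term cancellation would not separate them. The index-based argument above avoids this entirely, invoking only condition (2) and making no use of any degree comparison.
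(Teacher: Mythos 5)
Your argument is correct and is essentially the paper's own proof: both isolate the top-index term of a putative nontrivial relation and derive $h_n\in k[h_1,\hdots,h_{n-1}]$, contradicting condition (2) of the definition. Your extra care with the $h_0=1$ term and the $n=0,1$ edge cases is fine but does not change the substance.
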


\begin{proof}
Assume that $\sum_{0\le i\le m}c_ih_i=0$ for $c_i\in k$ and some $m\ge 1$. If $c_m\ne 0$ then $h_m\in k[h_1,\hdots ,h_{m-1}]$, which is not the case, meaning that $c_m=0$. By induction, $c_i=0$ for each $i$. 
\end{proof}

\begin{lemma}\label{complete} If $\deg$ is of finite type, then $(B,\deg )$ admits a complete signature sequence. 
\end{lemma}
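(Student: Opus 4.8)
The plan is to build a signature sequence greedily, exactly as licensed by Definition \ref{sig-seq}, and then verify that the process is well-defined at each stage and that its output generates $B$. Set $h_0=1$, and suppose $h_1,\dots ,h_{n-1}$ have been chosen; write $A_{n-1}=k[h_1,\dots ,h_{n-1}]$. If $A_{n-1}=B$ we stop, obtaining a finite complete sequence. Otherwise we must exhibit $d_n=\min\{g\in G\mid \mathcal{F}_g\not\subset A_{n-1}\}$ and choose some $h_n\in\mathcal{F}_{d_n}\setminus A_{n-1}$. The entire substance of the lemma lies in the existence of this minimum: since $(G,<)$ need not be well-ordered on $G_+$ (as the remark after \emph{Proposition\,\ref{unmixed-monoid}} shows) and $\deg(B)$ need not be finitely generated, I cannot invoke \emph{Proposition\,\ref{unmixed-monoid}}; instead the finite-type hypothesis must do the work. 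I expect this existence step to be the main obstacle.

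The key auxiliary fact I would prove first is that for every $g_1\in G$ the set $\{g\in\deg(B)\mid g\le g_1\}$ is finite. Indeed, distinct degrees $g^{(1)},\dots ,g^{(m)}\le g_1$ are realized by nonzero elements $b_1,\dots ,b_m\in\mathcal{F}_{g_1}$ with $\deg b_i=g^{(i)}$; elements of pairwise distinct degrees are $k$-linearly independent (the standard highest-degree-summand argument, as in \emph{Lemma\,\ref{independent}}), so $m\le\dim_k\mathcal{F}_{g_1}<\infty$ by finite type. Granting this, note that the ``bad-degree'' set $S=\{g\mid\mathcal{F}_g\not\subset A_{n-1}\}$ is precisely the up-set generated by $S'=\deg(B\setminus A_{n-1})\subseteq\deg(B)$, and one checks directly that $\min S=\min S'$ whenever the latter exists. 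Since $A_{n-1}\ne B$, the set $S'$ is nonempty; fixing any $g_1\in S'$, the subset $S'\cap\{g\le g_1\}$ is a nonempty subset of the finite totally ordered set above, hence has a least element, which is $d_n=\min S'\in\deg(B)$. As $\mathcal{F}_{d_n}\not\subset A_{n-1}$, a valid choice of $h_n$ exists, so every step of the construction is well-defined.

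It remains to show completeness, i.e.\ $B=k[\vec h]$. If the construction halts at some stage $m$, then $\mathcal{F}_g\subset k[h_1,\dots ,h_m]$ for all $g\in G$, whence $B=\bigcup_{g\in G}\mathcal{F}_g=k[\vec h]$. If instead it runs forever with $I=\N$, I argue by contradiction: suppose $b\in B\setminus k[\vec h]$ and put $g=\deg b$. Then $b\in\mathcal{F}_g\setminus A_n$ for every $n$, so $d_{n+1}\le g$ for all $n$. The sequence $(d_n)$ is non-decreasing (the algebras $A_n$ increase) and bounded above by $g$, hence takes values in the finite set $\{g'\in\deg(B)\mid g'\le g\}$ and is eventually constant, say $d_n=d^*$ for all $n\ge N$. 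But then $h_N,h_{N+1},\dots$ all lie in the finite-dimensional space $\mathcal{F}_{d^*}$, contradicting their $k$-linear independence from \emph{Lemma\,\ref{independent}}. Thus no such $b$ exists and $\vec h$ is complete. The same finiteness input therefore resolves both the well-definedness obstacle and the termination argument, which is why I would foreground the finiteness lemma at the start of the proof.
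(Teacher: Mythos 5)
Your proof is correct and follows essentially the same greedy construction as the paper's: extend the sequence as long as the minimum $d_n$ exists, and use the finite-type hypothesis to force completeness in the infinite case. You supply more detail than the paper does --- in particular the finiteness of $\{g\in\deg(B)\mid g\le g_1\}$, which justifies the existence of the minimum that the paper simply asserts --- but the underlying argument is the same.
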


\begin{proof} There are two cases to consider.

Case 1: There exists a complete finite signature sequence for $(B,\deg )$. 

Case 2: There is no complete finite signature sequence for $(B,\deg )$. In this case, 
any finite signature sequence $\{ h_0,\hdots ,h_n\}$ can be extended, that is, 
\[
d:=\min\{ g\in G\,\vert\, \mathcal{F}_g\not\subset k[h_1,\hdots ,h_n]\}
\]
exists, and we can choose $h_{n+1}\in \mathcal{F}_d\setminus k[h_1,\hdots ,h_n]$. By induction, there exists an infinite signature sequence $\vec{h}$. 
Since $\dim_k\mathcal{F}_g<\infty$ for each $g$, it follows that, given $g\in G$:
\[
\mathcal{F}_g\subset k[h_1,\hdots ,h_n] \quad {\rm for} \quad n\gg 0
\]
Therefore, $B=k[\vec{h}]$ and $\vec{h}$ is a complete infinite signature sequence.
\end{proof}

Let $\vec{h}$ be a signature sequence of length $L$ for the pair $(B,\deg )$, with degree sequence $d_i$. Define monoids $H_i\subset G$ by
$H_i= \N d_1+\cdots+\N d_i$, $1\le i\le L$.

\begin{proposition}\label{intersect} Given $n\ge 1$ let $\vec{h}$ be a signature sequence for $(B,\deg )$ of length $L\ge n$. 
\begin{itemize}
\item [{\bf (a)}] $\mathcal{V}_{d_n}\subset k[h_1,\hdots ,h_{n-1}]$
\item [{\bf (b)}] Given $g\in H_{n-1}$, let $\mathcal{W}_g$ be a subspace of $\mathcal{F}_g$ such that $\mathcal{F}_g=\mathcal{V}_g\oplus\mathcal{W}_g$. If $g\le d_n$, then:
\[
\mathcal{W}_g\cap k[h_1,\hdots ,h_{n-1}]\ne \{ 0\}
\]
\end{itemize}
\end{proposition}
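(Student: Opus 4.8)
The plan is to treat part (a) as an immediate consequence of the minimality built into the definition of $d_n$, and then to deduce part (b) by exhibiting an explicit monomial in $h_1,\hdots ,h_{n-1}$ of degree exactly $g$.

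For part (a), recall that $d_n=\min\{ g\in G\,\vert\, \mathcal{F}_g\not\subset k[h_1,\hdots ,h_{n-1}]\}$. Hence for every $g<d_n$ we have $\mathcal{F}_g\subset k[h_1,\hdots ,h_{n-1}]$ by minimality. Since every nonzero $f\in\mathcal{V}_{d_n}$ satisfies $\deg f<d_n$, it lies in $\mathcal{F}_{\deg f}\subset k[h_1,\hdots ,h_{n-1}]$, giving $\mathcal{V}_{d_n}\subset k[h_1,\hdots ,h_{n-1}]$.

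Before addressing part (b) I would record the key observation that $\deg h_i=d_i$ for each $i$. Indeed $h_i\in\mathcal{F}_{d_i}$ gives $\deg h_i\le d_i$; were the inequality strict we would have $h_i\in\mathcal{V}_{d_i}\subset k[h_1,\hdots ,h_{i-1}]$ by part (a), contradicting the defining condition $h_i\notin k[h_1,\hdots ,h_{i-1}]$. Because $\deg$ is additive on products in the domain $B$, it follows that for any $a_1,\hdots ,a_{n-1}\in\N$ the monomial $m=h_1^{a_1}\cdots h_{n-1}^{a_{n-1}}$ has $\deg m=a_1d_1+\cdots +a_{n-1}d_{n-1}$.

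For part (b), I would write $g=a_1d_1+\cdots +a_{n-1}d_{n-1}$ with $a_i\in\N$ (possible since $g\in H_{n-1}$) and set $m=h_1^{a_1}\cdots h_{n-1}^{a_{n-1}}\in k[h_1,\hdots ,h_{n-1}]$, so that $\deg m=g$ and $m\in\mathcal{F}_g$. Decomposing $m=v+w$ according to $\mathcal{F}_g=\mathcal{V}_g\oplus\mathcal{W}_g$, the equality $\deg m=g$ forces $m\notin\mathcal{V}_g$, hence $w\ne 0$. The hypothesis $g\le d_n$ gives $\deg v<g\le d_n$, so $v\in\mathcal{V}_{d_n}\subset k[h_1,\hdots ,h_{n-1}]$ by part (a); therefore $w=m-v\in k[h_1,\hdots ,h_{n-1}]$ as well. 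Thus $w$ is a nonzero element of $\mathcal{W}_g\cap k[h_1,\hdots ,h_{n-1}]$, as required. The one point demanding care, and the only real obstacle, is pinning down $\deg m=g$ exactly rather than merely $\deg m\le g$; this is precisely what the preliminary identity $\deg h_i=d_i$ together with multiplicativity of $\deg$ secures, and it is also what makes the argument independent of the choice of complement $\mathcal{W}_g$.
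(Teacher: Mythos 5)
Your proposal is correct and follows essentially the same route as the paper: part (a) is the minimality of $d_n$ (the paper phrases it contrapositively, you phrase it directly), and part (b) uses the same monomial $h_1^{a_1}\cdots h_{n-1}^{a_{n-1}}$ of degree exactly $g$, decomposed against $\mathcal{F}_g=\mathcal{V}_g\oplus\mathcal{W}_g$ and combined with part (a). Your explicit verification that $\deg h_i=d_i$ is a point the paper leaves implicit, but it is the same argument.
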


\begin{proof} Assume $f\in B\setminus k[h_1,\hdots ,h_{n-1}]$, and set $g =\deg f$. Then:
\[
f\in \mathcal{F}_g\setminus k[h_1,\hdots ,h_{n-1}]\implies g\ge d_n
\]
This proves part (a).

For part (b), since $g\in H_{n-1}$, there exist $c_1,\hdots ,c_{n-1}\in\N$ such that $\deg (h_1^{c_1}\cdots h_{n-1}^{c_{n-1}})=g$. Therefore, there exist $v\in\mathcal{V}_g$ and nonzero $w\in\mathcal{W}_g$ such that $h_1^{c_1}\cdots h_{n-1}^{c_{n-1}}=v+w$. Consequently:
\[
w-h_1^{c_1}\cdots h_{n-1}^{c_{n-1}}=v\in\mathcal{V}_g \implies \deg (w-h_1^{c_1}\cdots h_{n-1}^{c_{n-1}})<g\le d_n
\]
Part (a) implies $w-h_1^{c_1}\cdots h_{n-1}^{c_{n-1}}\in k[h_1,\hdots ,h_{n-1}]$, so $w\in k[h_1,\hdots ,h_{n-1}]$. This proves part (b).
\end{proof}

\begin{corollary}\label{irred} Given $n\ge 1$ let $\vec{h}$ be a signature sequence for $(B,\deg )$ of length $L\ge n$. 
If $\mathcal{F}_0=k$, then $h_n+b$ is irreducible in $B$ for all $b\in\mathcal{V}_{d_n}$. 
\end{corollary}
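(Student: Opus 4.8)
The plan is to argue by contradiction, exploiting the minimality built into the definition of $d_n$ together with the fact that $\mathcal{F}_0=k$ forces the unit group of $B$ to be exactly $k^*$.

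First I would record a few preliminary facts about degrees. Since $h_n\notin k[h_1,\hdots ,h_{n-1}]$ while $\mathcal{V}_{d_n}\subset k[h_1,\hdots ,h_{n-1}]$ by {\it Proposition\,\ref{intersect}(a)}, the element $h_n$ cannot lie in $\mathcal{V}_{d_n}$; as $h_n\in\mathcal{F}_{d_n}$, this yields $\deg h_n=d_n$. Because $b\in\mathcal{V}_{d_n}$ satisfies $\deg b<d_n$, it follows that $\deg (h_n+b)=d_n$. Moreover $d_n>0$: indeed $\mathcal{F}_0=k\subset k[h_1,\hdots ,h_{n-1}]$, so $0$ does not belong to the set defining $d_n$, and $\deg (B)\subset G_+$ then forces $d_n>0$. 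In particular $h_n+b$ is a nonzero element of positive degree; since {\it Proposition\,\ref{degree}(e)} gives $B^*\subset\mathcal{F}_0=k$ and hence $B^*=k^*$, the element $h_n+b$ is a nonzero non-unit, so its irreducibility is a meaningful question.

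Next I would suppose, for contradiction, that $h_n+b=pq$ for non-units $p,q\in B$. Since $\mathcal{F}_0=k$, every nonzero element of degree $0$ is a unit, so $p$ and $q$ being non-units forces $\deg p>0$ and $\deg q>0$. From $\deg p+\deg q=\deg (h_n+b)=d_n$ together with $\deg q>0$, translation invariance of $<$ gives $\deg p=\deg p+0<\deg p+\deg q=d_n$, and symmetrically $\deg q<d_n$. Hence both $p$ and $q$ lie in $\mathcal{V}_{d_n}$.

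Finally, {\it Proposition\,\ref{intersect}(a)} places $p,q\in k[h_1,\hdots ,h_{n-1}]$, so $h_n+b=pq$ lies in this subalgebra; since $b\in\mathcal{V}_{d_n}\subset k[h_1,\hdots ,h_{n-1}]$ as well, I conclude $h_n=pq-b\in k[h_1,\hdots ,h_{n-1}]$, contradicting the defining property of the signature sequence. The argument is short, and the only delicate point is the passage from a factorization to the \emph{strict} degree inequalities $\deg p,\deg q<d_n$: this is precisely where both the hypothesis $\mathcal{F}_0=k$ (to rule out degree-zero, hence unit, factors) and the total-order structure on $G$ (to convert the additive relation $\deg p+\deg q=d_n$ into strict inequalities) are essential.
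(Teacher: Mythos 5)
Your argument is correct and is essentially the paper's own proof: both rest on Proposition~\ref{intersect}(a) to force low-degree factors into $k[h_1,\hdots ,h_{n-1}]$ (contradicting $h_n\notin k[h_1,\hdots ,h_{n-1}]$), and on $\mathcal{F}_0=k$ together with degree additivity to identify degree-zero factors with units. You merely run the case analysis in the contrapositive direction (assuming both factors are non-units rather than splitting on whether some factor has degree $\ge d_n$), and you add the harmless extra observations that $\deg h_n=d_n$, $d_n>0$, and $h_n+b$ is a non-unit.
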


\begin{proof}
Assume that $h_n+b=uv$ for $u,v\in B$. If $\deg u<d_n$ and $\deg v<d_n$, then by {\it Proposition\,\ref{intersect}(a)} we have:
\[
u,v,b\in k[h_1,\hdots ,h_{n-1}] \implies uv=h_n+b\in k[h_1,\hdots ,h_{n-1}] \implies h_n\in k[h_1,\hdots ,h_{n-1}] 
\]
a contradiction. Therefore, either $\deg u\ge d_n$ or $\deg v\ge d_n$. Assume that $\deg u\ge d_n$. Then:
\[
d_n\le d_n+\deg v\le \deg u +\deg v=\deg (h_n+b)=d_n \implies \deg v=0 \implies v\in k^*
\]
Likewise, $u\in k^*$ if $\deg v\ge d_n$. 
\end{proof}

\subsection{Homogeneous Signature Sequences}

In this section, let $B$ be a $k$-domain with $G$-grading $\mathfrak{g}$ given by $B=\bigoplus_{g\in G}B_g$, where $(G,<)$ is a finitely generated totally ordered abelian group and $\deg_{\mathfrak{g}}(B)\subset G_+$.

If $\vec{h}$ is a signature sequence for $(B,\deg_{\mathfrak{g}})$ and each $h_n$ is homogeneous, 
we say that $\vec{h}$ is a {\bf homogeneous} signature sequence.
Note that, if $B_0=k$ and $\vec{h}$ is a homogeneous signature sequence, {\it Corollary\,\ref{irred}} implies that any $\beta\in B$ with $\bar{\beta}=h_n$ is irreducible and $\beta-h_n\in k[h_1,\hdots ,h_{n-1}]$.

\begin{lemma}\label{ADD} 
If $\vec{h}=\{ h_i\}$ is a signature sequence for $(B,\deg_{\mathfrak{g}})$, then $\{\bar{h}_i\}$ is a homogeneous signature sequence for $(B,\deg_{\mathfrak{g}})$ and, 
for $0\le n\le\vert\vec{h}\vert$, $k[\bar{h}_1,\hdots ,\bar{h}_n]=k[h_1,\hdots ,h_n]$. 
\end{lemma}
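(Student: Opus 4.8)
The plan is to establish both conclusions simultaneously by induction on $n$, proving for each $0\le n\le\vert\vec h\vert$ that $\{\bar h_0,\ldots,\bar h_n\}$ is a signature sequence with the same degree sequence $d_0,\ldots,d_n$ as $\{h_0,\ldots,h_n\}$, and that $k[\bar h_1,\ldots,\bar h_n]=k[h_1,\ldots,h_n]$. The base case $n=0$ is immediate, since $\bar h_0=\bar 1=1=h_0$ and both algebras equal $k$. As each $\bar h_i$ is homogeneous by definition, the homogeneity of the resulting sequence is automatic; the content is that it satisfies the defining minimality condition of a signature sequence and that the two algebras coincide.

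The single observation driving the induction is that, for the degree function attached to the grading, the leading summand satisfies $\deg_{\mathfrak g}(\bar h_n)=d_n$, so the difference $h_n-\bar h_n$ has degree strictly below $d_n$ (or is $0$); that is, $h_n-\bar h_n\in\mathcal V_{d_n}$. By {\it Proposition\,\ref{intersect}(a)}, $\mathcal V_{d_n}\subset k[h_1,\ldots,h_{n-1}]$, hence $h_n-\bar h_n\in k[h_1,\ldots,h_{n-1}]$. Combined with the inductive hypothesis $k[h_1,\ldots,h_{n-1}]=k[\bar h_1,\ldots,\bar h_{n-1}]$, this makes the passage between $h_n$ and $\bar h_n$ transparent: $\bar h_n=h_n-(h_n-\bar h_n)$ lies in $k[h_1,\ldots,h_n]$, and $h_n=\bar h_n+(h_n-\bar h_n)$ lies in $k[\bar h_1,\ldots,\bar h_n]$, giving the algebra equality at stage $n$.

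The same difference also settles the signature-sequence axiom. If $\bar h_n$ were to lie in $k[h_1,\ldots,h_{n-1}]$, then $h_n=\bar h_n+(h_n-\bar h_n)$ would too, contradicting $h_n\notin k[h_1,\ldots,h_{n-1}]$; hence $\bar h_n\notin k[\bar h_1,\ldots,\bar h_{n-1}]$. Because the two partial algebras agree, the minimal degree $\min\{g\mid\mathcal F_g\not\subset k[\bar h_1,\ldots,\bar h_{n-1}]\}$ defining the next step of the $\bar h$-sequence coincides with $d_n$, and $\bar h_n\in\mathcal F_{d_n}$ since $\deg_{\mathfrak g}(\bar h_n)=d_n$. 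Thus $\{\bar h_0,\ldots,\bar h_n\}$ meets {\it Definition\,\ref{sig-seq}} with degree sequence $d_0,\ldots,d_n$, completing the inductive step.

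I do not anticipate a serious obstacle: the argument reduces to the single fact that the leading summand differs from the original element by something of strictly smaller degree, which {\it Proposition\,\ref{intersect}(a)} places inside the previously constructed subalgebra. The only point requiring care is the interlocking of the two claims — one must carry the algebra equality through the induction in order to know that the defining minimum $d_n$ is unchanged when $\{h_i\}$ is replaced by $\{\bar h_i\}$ — so I would prove both statements together rather than separately, and I would note that the degenerate case $h_n=\bar h_n$ is covered because $0\in\mathcal V_{d_n}$.
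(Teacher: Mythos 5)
Your proposal is correct and follows essentially the same route as the paper's proof: both proceed by induction, setting $b_n=h_n-\bar h_n$, noting $\deg_{\mathfrak g}b_n<d_n$ so that \emph{Proposition~\ref{intersect}(a)} places $b_n$ in $k[h_1,\ldots,h_{n-1}]$, and then deducing both the algebra equality and the non-membership $\bar h_n\notin k[\bar h_1,\ldots,\bar h_{n-1}]$. Your explicit remark that the defining minimum $d_n$ is unchanged because the two partial algebras coincide is a point the paper leaves implicit, but the argument is the same.
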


\begin{proof} Given $i\ge 0$, let $b_i=h_i-\bar{h}_i$, noting that $\deg b_i<\deg h_i$. By definition, $\bar{h}_0=h_0$ and $h_1-\bar{h}_1\in k$, so $k[\bar{h}_1]=k[h_1]$. Assume, for some $n$ with 
$1\le n<\vert\vec{h}\vert$, that $\{ \bar{h}_0,\hdots ,\bar{h}_n\}$ is a signature sequence and $k[\bar{h}_1,\hdots ,\bar{h}_n]=k[h_1,\hdots ,h_n]$. We have:
\[
h_{n+1}\in\mathcal{F}_{d_{n+1}}\setminus k[\bar{h}_1,\hdots ,\bar{h}_n]
\]
Therefore, $\bar{h}_{n+1}\in B_{d_{n+1}}$. Since $\deg b_{n+1}<d_{n+1}$, {\it Proposition\,\ref{intersect}} implies that $b_{n+1}\in k[\bar{h}_1,\hdots ,\bar{h}_n]$. 
If $\bar{h}_{n+1}\in k[\bar{h}_1,\hdots ,\bar{h}_n]$, then $h_{n+1}=b_{n+1}+\bar{h}_{n+1}\in k[\bar{h}_1,\hdots ,\bar{h}_n]$, a contradiction. So 
$\bar{h}_{n+1}\not\in k[\bar{h}_1,\hdots ,\bar{h}_n]$. Therefore, 
$\{ \bar{h}_0,\hdots ,\bar{h}_{n+1}\}$ is a signature sequence and:
\[
k[\bar{h}_1,\hdots ,\bar{h}_{n+1}]=k[h_1,\hdots ,h_n, h_{n+1}-b_{n+1}]=k[h_1,\hdots ,h_n, h_{n+1}]
\]
The result now follows by induction on $n$.
\end{proof}

The next result gives criteria for a set of generators in a graded $k$-algebra to be a complete homogeneous signature sequence.

\begin{proposition}\label{criteria} Let $B$ be a $k$-domain with $G$-grading $\mathfrak{g}$, where $(G,<)$ is a finitely generated totally ordered abelian group and $\deg_{\mathfrak{g}}(B)\subset G_+$.
Let $\{ x_i\}_{i\in I}\subset B$ be a set of homogeneous elements which generate $B$ as a $k$-algebra, where $x_0=1$. The following conditions are equivalent. 
\begin{enumerate}
\item 
$\deg_{\mathfrak{g}} x_i\le \deg_{\mathfrak{g}}x_{i+1}$ and 
$x_{i+1}\not\in k[x_1,\hdots ,x_i]$ when $i,i+1\in I\setminus\{ 0\}$.
\smallskip
\item $\vec{x}:=\{ x_i\}_{i\in I}$ is a complete homogeneous signature sequence for $(B,\deg_{\mathfrak{g}})$.
\end{enumerate}
\end{proposition}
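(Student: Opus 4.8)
The plan is to prove the two implications separately; the content lies almost entirely in $(1)\Rightarrow(2)$, while $(2)\Rightarrow(1)$ is a short unwinding of the definitions. For $(2)\Rightarrow(1)$, assume $\vec{x}$ is a complete homogeneous signature sequence with degree sequence $\{d_n\}$. I would first check that $\deg_{\mathfrak{g}}x_n=d_n$ for each $n\ge 1$: since $x_n\in\mathcal{F}_{d_n}$ we have $\deg_{\mathfrak{g}}x_n\le d_n$, while Proposition~\ref{intersect}(a) gives $\mathcal{V}_{d_n}\subset k[x_1,\hdots ,x_{n-1}]$, so $\deg_{\mathfrak{g}}x_n<d_n$ would force the contradiction $x_n\in k[x_1,\hdots ,x_{n-1}]$. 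The monotonicity $d_i\le d_{i+1}$ of the degree sequence (recorded after Definition~\ref{sig-seq}) then yields $\deg_{\mathfrak{g}}x_i\le\deg_{\mathfrak{g}}x_{i+1}$, and $x_{i+1}\not\in k[x_1,\hdots ,x_i]$ is part of the definition; so condition (1) holds.

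For $(1)\Rightarrow(2)$, completeness is automatic because the $x_i$ generate $B$ over $k$, so it remains to verify the signature-sequence axioms. I would argue by induction on $n$, showing that $\{x_0,\hdots ,x_n\}$ is a signature sequence with $d_n=\deg_{\mathfrak{g}}x_n$. Writing $A_m=k[x_1,\hdots ,x_m]$, the heart of the induction is the claim
\[
\mathcal{F}_g\subset A_{n-1}\quad\text{for every } g<\deg_{\mathfrak{g}}x_n .
\]
Granting this, the facts that $x_n\in\mathcal{F}_{\deg_{\mathfrak{g}}x_n}$ and $x_n\notin A_{n-1}$ identify $\deg_{\mathfrak{g}}x_n$ as the minimum of $\{g\mid\mathcal{F}_g\not\subset A_{n-1}\}$, which is precisely $d_n$; the remaining axioms are then immediate.

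To prove the claim I would use a homogeneous monomial argument. Since $A_{n-1}$ is generated by homogeneous elements it is a graded subalgebra, so it suffices to treat $f\in B$ homogeneous of degree $h<\deg_{\mathfrak{g}}x_n$. Expressing $f$ as a polynomial in the generators and extracting the degree-$h$ homogeneous part (each monomial $x_{i_1}^{a_1}\cdots x_{i_m}^{a_m}$ is homogeneous of degree $\sum_j a_j\deg_{\mathfrak{g}}x_{i_j}$), one reduces to a sum of monomials of degree exactly $h$. In any such monomial, the ordered-group inequality that a sum of elements of $G_+$ dominates each summand gives $\deg_{\mathfrak{g}}x_{i_j}\le h<\deg_{\mathfrak{g}}x_n$ for every factor; since the generator degrees are nondecreasing, this forces $i_j\le n-1$, so the monomial lies in $A_{n-1}$ and hence $f\in A_{n-1}$.

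I expect the main obstacle to be the careful bookkeeping in this last step. One works over a general totally ordered abelian group rather than $\Z$, so the inequality $\deg_{\mathfrak{g}}x_{i_j}\le h$ must be justified from the order axioms (namely $x,y\ge 0\Rightarrow x+y\ge x$), and one must confirm that ties among the values $\deg_{\mathfrak{g}}x_i$ do not disturb the implication $\deg_{\mathfrak{g}}x_{i_j}<\deg_{\mathfrak{g}}x_n\Rightarrow i_j\le n-1$. It is also worth checking that the degenerate situations---$\deg_{\mathfrak{g}}x_n=0$, where the claim is vacuous and $d_n=0$ follows directly, and several low generators sharing one degree---run smoothly inside the same induction.
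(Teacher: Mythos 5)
Your proposal is correct and follows essentially the same route as the paper: the same induction on the length of the initial segment, driven by the same degree-domination argument (a monomial in the homogeneous generators of degree $h<\deg_{\mathfrak{g}}x_n$ can only involve generators of index $<n$, since all degrees lie in $G_+$ and are nondecreasing). The only organizational difference is that you identify $d_n=\deg_{\mathfrak{g}}x_n$ directly by proving $\mathcal{F}_g\subset k[x_1,\hdots ,x_{n-1}]$ for every $g<\deg_{\mathfrak{g}}x_n$, whereas the paper extends $\{x_0,\hdots ,x_m\}$ by an auxiliary homogeneous element $y$ (homogenized via Lemma~\ref{ADD}) and computes $\deg_{\mathfrak{g}}y=d_{m+1}$.
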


\begin{proof} That condition (2) implies condition (1) follows from {\it Definition\,\ref{sig-seq}}. 

Assume that condition (1) holds. 
Let $d_i=\deg_{\mathfrak{g}}x_i$ for $i\in I$.  Clearly, $\{ x_0\}$ is a homogeneous signature sequence. Suppose, by way of induction, that $\vec{p}:=\{x_0,\hdots ,x_m\}$ is a homogeneous signature sequence for some $m\ge 0$. If $\vec{p}$ is complete, then there is nothing further to show. Otherwise, $\vec{p}$ can be extended to a signature sequence $\{ x_0,\hdots ,x_m,y\}$ for some $y\in B$. Let $\rho=\deg_{\mathfrak{g}}y$. 
By {\it Lemma\,\ref{ADD}}, we may assume that $y$ is homogeneous.  By definition of signature sequence and the hypothesis that $x_{m+1}\not\in k[x_1,\hdots ,x_m]$, we have $d_m\le \rho \le d_{m+1}$.

Since $B$ is generated by $\{ x_i\}_{i\in I}$, there exists $t\ge m+1$ such that $y\in k[x_1,\hdots ,x_t]\setminus k[x_1,\hdots ,x_{t-1}]$. Write
\[
y=\sum_{e\in\N^t}\alpha_e x_1^{e_1}\cdots x_t^{e_t} \quad (e_i\in\N\,\, ,\,\, e=(e_1,\hdots ,e_t)\,\, ,\,\, \alpha_e\in k)
\]
where each monomial $x_1^{e_1}\cdots x_t^{e_t}$ with $\alpha_e\ne 0$ is of degree $\rho$. Choose $e$ so that $\alpha_e\ne 0$ and $e_t\ge 1$. Then:
\[
\rho =\deg_{\mathfrak{g}}x_1^{e_1}\cdots x_t^{e_t} \implies
d_{m+1}\ge \rho\ge e_td_t\ge e_td_{m+1}
\]
Therefore, $e_t=1$ and $\rho =d_{m+1}$. 
We have thus shown:
\[
d_{m+1}=\min\{ g\in G\, |\, \mathcal{F}_g\not\subset k[x_1,\hdots ,x_m]\} \quad {\rm and} \quad x_{m+1}\in\mathcal{F}_{d_{m+1}}\setminus k[x_1,\hdots ,x_m]
\]
It follows that $\{ x_0,\hdots ,x_m,x_{m+1}\}$ is a homogeneous signature sequence. 
The desired result now follows by induction. 
\end{proof}

\subsection{Signature Sequences in UFDs}\label{UFD-sig}

\begin{proposition}\label{fact-closed} Assume that $B$ is a UFD over $k$ and that $\mathcal{F}_0=k$. Let $\vec{h}=\{ h_i\}_{i\in I}$ be a signature sequence for $(B,\deg )$.
\begin{itemize}
\item [{\bf (a)}]  $h_i$ is a prime element of $B$ for each $i\in I\setminus\{ 0\}$.
\item [{\bf (b)}]  If $k$ is algebraically closed, then $k[h_i]$ is factorially closed in $B$ for each $i\in I\setminus\{ 0\}$.
\item [{\bf (c)}]  If $k$ is algebraically closed and $m,n\in I$ are such that $1\le m<n$, then $k[h_m,h_n]\cong k^{[2]}$. 
\end{itemize}
\end{proposition}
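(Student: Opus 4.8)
The plan is to prove the three parts in order, each one feeding the next. Part (a) follows immediately from \emph{Corollary \ref{irred}}: taking $b=0\in\mathcal{V}_{d_i}$ (legitimate since $\deg 0=-\infty<d_i$), that corollary gives that $h_i$ is irreducible in $B$, and since $B$ is a UFD, irreducible elements are prime, so each $h_i$ is prime. No further work is needed here.

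For part (b) I would first record two standing facts. Since $\deg h_i=d_i\ge d_1>0$ (using $\mathcal{F}_0=k$), the powers $h_i^j$ have pairwise distinct degrees $jd_i$, hence are linearly independent, so $h_i$ is transcendental over $k$ and $k[h_i]\cong k^{[1]}$; and by \emph{Proposition \ref{degree}(e)} together with $\mathcal{F}_0=k$ we have $B^*\subset\mathcal{F}_0=k$, whence $B^*=k^*$. Now, because $k$ is algebraically closed, every nonzero element of $k[h_i]$ factors as $\alpha\prod_j(h_i-c_j)$ with $\alpha\in k^*$ and $c_j\in k$; each factor $h_i-c_j$ is of the form $h_i+b$ with $b=-c_j\in\mathcal{V}_{d_i}$, hence is prime in $B$ by part (a) and \emph{Corollary \ref{irred}}. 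Given $rs\in k[h_i]$ with $r,s\in B$ nonzero, uniqueness of factorization in $B$ forces every prime factor of $r$ and of $s$ to be an associate of some $h_i-c_j$; since $B^*=k^*$, associates of $h_i-c_j$ remain inside $k[h_i]$, so $r,s\in k[h_i]$, giving factorial closedness.

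For part (c) I want $h_m,h_n$ algebraically independent over $k$. A naive leading-degree argument settles this when $d_m,d_n$ are $\Z$-linearly independent in $G$: then the monomials $h_m^ah_n^b$ have pairwise distinct degrees $ad_m+bd_n$, so the top-degree term of any relation cannot cancel. To cover all cases uniformly I would instead argue by contradiction using primality. Suppose $h_m,h_n$ were dependent; then $h_n$ is algebraic over $k(h_m)$, so there is a nonzero primitive $f(Y)=c_NY^N+\cdots+c_0\in k[h_m][Y]$ of minimal $Y$-degree $N\ge 1$ with $f(h_n)=0$. From $c_0=-h_n(c_Nh_n^{N-1}+\cdots+c_1)$ we get $h_n\mid c_0$ in $B$. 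If $c_0=0$ then $f(Y)=Yg(Y)$ with $g(h_n)=0$ of strictly smaller $Y$-degree, contradicting minimality; if $c_0\ne 0$, then since $c_0\in k[h_m]$ its prime factors in $B$ are, by part (b), associates of elements $h_m-c$, none of which is associate to $h_n$ (otherwise $h_n\in k[h_m]\subset k[h_1,\dots,h_{n-1}]$, contradicting the defining property of the signature sequence), so the prime $h_n$ cannot divide $c_0$. Either way we reach a contradiction, so $h_m,h_n$ are algebraically independent and $k[h_m,h_n]\cong k^{[2]}$.

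The main obstacle is the algebraic-dependence step in part (c): the degree bookkeeping breaks down exactly when $d_m$ and $d_n$ are proportional in $G$, where distinct monomials share a degree and can cancel. The way around it is to convert a hypothetical dependence into the divisibility $h_n\mid c_0$ and exploit that the prime $h_n$ lies outside $k[h_m]$, so it cannot appear among the factors of an element of $k[h_m]$.
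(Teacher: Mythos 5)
Your parts (a) and (b) are correct and essentially identical to the paper's: (a) is Corollary~\ref{irred} with $b=0$ plus the UFD hypothesis, and (b) is the same argument through the primes $h_i-\lambda$, resting on $B^*=k^*$, which you rightly extract from Proposition~\ref{degree}(e) and $\mathcal{F}_0=k$. The only real divergence is part (c). The paper disposes of it in two lines: by part (b), $k[h_m]$ is factorially closed in $B$, hence algebraically closed in $B$ (a standard fact), and since $h_n\notin k[h_m]\subseteq k[h_1,\hdots,h_{n-1}]$, the element $h_n$ is transcendental over $k[h_m]\cong k^{[1]}$. You instead re-derive the needed instance of that standard fact by hand: from a minimal-degree relation $c_Nh_n^N+\cdots+c_0=0$ you get $h_n\mid c_0$, and you exclude this by identifying the primes dividing $c_0\in k[h_m]$ as associates of the various $h_m-c$, none of which can be $h_n$. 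This is correct (the degenerate cases $c_0=0$ and $c_0\in k^*$ are covered), but it does more work than necessary: the identity $h_n\cdot\bigl(-(c_Nh_n^{N-1}+\cdots+c_1)\bigr)=c_0\in k[h_m]$ together with part (b) already forces $h_n\in k[h_m]$ when $c_0\ne0$, with no need to examine prime factorizations. What your route buys is self-containment — you never invoke the implication ``factorially closed $\Rightarrow$ algebraically closed'' as a black box — at the cost of a longer case analysis; the two arguments are otherwise the same in substance.
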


\begin{proof} For part (a), {\it Corollary\,\ref{irred}} shows that $h_i$ is irreducible in $B$. Since $B$ is a UFD, $h_i$ is prime. 

For part (b), suppose that $uv\in k[h_i]$ for $u,v\in B\setminus k$. Since $k$ is algebraically closed and $uv\not\in k$, $uv$ has a divisor of the form $h_i-\lambda$ where $\lambda\in k$. 
Since $\{ h_0,\hdots ,h_{i-1},h_i\}$ is a signature sequence and $i\ge 1$, $\{ h_0,\hdots ,h_{i-1},h_i-\lambda \}$ is also a signature sequence. 
By part (a), $h_i-\lambda$ is prime in $B$. 
It follows that every prime factor of $u$ (respectively, $v$) is of the form $h_i-\lambda$ for some $\lambda\in k$. Therefore, $u\in k[h_i]$ (respectively, $v\in k[h_i]$). 

For part (c), part (a) implies $k[h_m]$ is factorially closed in $B$, hence algebraically closed in $B$. Since $h_n\not\in k[h_m]$, it follows that $h_n$ is transcendental over $k[h_m]$. Since $m>0$, $k[h_m]\cong k^{[1]}$. Therefore, $k[h_m,h_n]\cong k^{[2]}$.
\end{proof}


\section{The rings $k[\Delta ]$}\label{R-Delta}

Let $k$ be a field and let trinomial data $\Delta$ over $k$ be given as in the {\it Introduction}. 
\begin{itemize}
\item [($\Delta$.1)] An integer $n\ge 2$ and partition $n=n_0+\cdots +n_r$ where $r,n_i\ge 1$, $0\le i\le r$.
\item [($\Delta$.2)] A sequence $\beta_i=(\beta_{i1},\hdots ,\beta_{in_i})\in\Z_+^{n_i}$, $0\le i\le r$, where 
$\gcd (d_i,d_j)=1$ when $i\ne j$, $d_i=\gcd(\beta_{i1},\hdots ,\beta_{in_i})$. 
\item [($\Delta$.3)] A sequence of distinct elements $\lambda_2,\hdots ,\lambda_r\in k^*$.
\end{itemize}
As before:
\[
k[\Delta ]=k[T_0,\hdots ,T_r]/(T_0^{\beta_0}+\lambda_iT_1^{\beta_1}+T_i^{\beta_i})_{2\le i\le r}=k[t_{ij}\, |\, 0\le i\le r\, ,\, 1\le j\le n_i ]
\]
Let $\mathfrak{m}\subset k[\Delta ]$ be the maximal ideal generated by the images of the $t_{ij}$. 
The point $p\in X={\rm Spec}(k[\Delta ])$ defined by $\mathfrak{m}$ is called the $\Delta$-{\bf origin} of $X$. 
\begin{definition}{\rm 
\begin{enumerate}
\item Given $i\in\{ 0,\hdots ,r\}$, $|\beta_i|=\sum_{1\le j\le n_i}\beta_{ij}$. 
\item $\Delta$ is {\bf reduced} if either \subitem (i) $r=1$ and $\beta_{01}=\beta_{11}=1$, or \subitem (ii) $r\ge 2$ and $|\beta_i|\ge 2$ for each $i\in\{ 0,\hdots ,r\}$.
\end{enumerate}
}
\end{definition}

\begin{proposition}\label{tri-data}
Let $k$ be a field and $B=k[\Delta ]$ for trinomial data $\Delta$ over $k$.
\begin{itemize}
\item [{\bf (a)}] $B$ admits an effective unmixed $\Z^{d-1}$-grading $\mathfrak{g}$ such that $B_0=k$, where each $t_{ij}$ is homogeneous. 
\item [{\bf (b)}] $B^*=k^*$
\item [{\bf (c)}] If $k$ is algebraically closed, then there exists reduced trinomial data $\Delta^{\prime}$ such that $B\cong_kk[\Delta^{\prime}]$. 
\end{itemize}
\end{proposition}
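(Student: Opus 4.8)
The plan is to construct the grading in (a) by hand, read off (b) from it, and prove (c) by an induction that eliminates variables of the form $t_{i1}$ occurring with exponent $1$. For (a) I would take the grading group to be $G=\Z^n/R$, where $\Z^n$ is free on symbols $e_{ij}$ (one per variable $t_{ij}$) and $R$ is generated by the $r$ vectors $\rho_i=\sum_j\beta_{ij}e_{ij}-\sum_j\beta_{0j}e_{0j}$ for $1\le i\le r$. These encode exactly the requirement that each trinomial $T_0^{\beta_0}+\lambda_iT_1^{\beta_1}+T_i^{\beta_i}$ be homogeneous, so declaring $\deg t_{ij}$ to be the image of $e_{ij}$ gives a grading of $k[T_0,\dots,T_r]$ that descends to $B$. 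Since the $\rho_i$ are supported on pairwise disjoint blocks of positive entries they are $\Z$-independent, whence $G$ has rank $n-r=d-1$ (recall $d=\dim B=n-r+1$ by Theorem \ref{DFN}); the grading is effective because the $\deg t_{ij}$ generate $G$ by construction.

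The heart of (a) is showing $G\cong\Z^{d-1}$, i.e. that $G$ is torsion-free. Viewing the $\rho_i$ as the columns of an $n\times r$ integer matrix $A$, the cokernel $G$ is torsion-free precisely when the $\gcd$ of the maximal ($r\times r$) minors of $A$ equals $1$. I would read these minors off the shape of $A$: each block-$0$ row is a scalar multiple of the all-ones vector, while a block-$i$ row ($i\ge1$) is supported in column $i$ alone. Hence a nonzero maximal minor uses at most one block-$0$ row and at most one row per block, producing only the two families $\prod_{i=1}^r\beta_{ij_i}$ and $\pm\beta_{0j_0}\prod_{i\ne i_0,\,i\ge1}\beta_{ij_i}$. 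Fix a prime $p$: the hypothesis $\gcd(d_i,d_j)=1$ for $i\ne j$ says $p$ divides at most one $d_i$, and whichever index that is, a suitable choice of the $j$'s makes one of the two families yield a minor not divisible by $p$. So the $\gcd$ of all maximal minors is $1$ and $G\cong\Z^{d-1}$. This is exactly the step where pairwise coprimality of the $d_i$ is indispensable, and I expect it to be the main obstacle in (a).

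To complete (a) I would introduce the linear form $\ell(e_{ij})=1/|\beta_i|$. A direct check gives $\ell(\rho_i)=1-1=0$, so $\ell$ descends to a homomorphism $G\to\Q$ that is strictly positive on every $\deg t_{ij}$, hence on all of $M\setminus\{0\}$, where $M$ is the weight monoid. Consequently a monomial of $G$-degree $0$ must have all exponents $0$, giving $B_0=k$; and positivity of $\ell$ on $M\setminus\{0\}$ forces the maximal subgroup of $M$ to be trivial, so the grading is unmixed. Part (b) then follows at once: refining $\ell$ to a total order makes $\deg(B)\subset G_+$, so by Proposition \ref{degree}(e) every unit lies in $\mathcal F_0=B_0=k$, and since $k^*\subseteq B^*$ we conclude $B^*=k^*$.

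For (c) I would induct on $r$, at each stage eliminating a block with $|\beta_{i_0}|=1$ (which forces $n_{i_0}=1$, $\beta_{i_0 1}=1$, so $T_{i_0}^{\beta_{i_0}}=t_{i_0 1}$). If $r=1$ then $B=k^{[n]}$, which I re-present by reduced data of type (i); if $r\ge2$ and all $|\beta_i|\ge2$ then $\Delta$ is already reduced of type (ii). Otherwise pick $i_0$ with $|\beta_{i_0}|=1$. When $i_0\ge2$, the relation $T_0^{\beta_0}+\lambda_{i_0}T_1^{\beta_1}+t_{i_0 1}=0$ solves for $t_{i_0 1}$, a variable occurring in no other relation, so deleting it and this relation yields $k[\Delta]\cong_k k[\Delta'']$ with $r''=r-1$ and all other data untouched. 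When $i_0\in\{0,1\}$ I instead solve the $i=2$ relation for the offending variable and substitute into the relations $i\ge3$; the outcome is again a trinomial system on the surviving blocks, whose new coefficients form a Möbius transform of the $\lambda_i$ (hence still distinct and nonzero) once I rescale a single variable in one block so that its monomial recovers coefficient $1$. That rescaling amounts to solving $c^{e}=\gamma$ for a prescribed $\gamma\in k^*$ and a positive exponent $e$, which is precisely where algebraic closedness of $k$ enters and is the delicate point of (c). In every case $\Delta''$ is valid trinomial data ($\gcd$-coprimality survives deleting a block) with $r''=r-1$, so the induction closes.
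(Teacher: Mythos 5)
Your argument is correct, and parts (b) and (c) track the paper's proof closely: (b) is the same deduction from (a) via \emph{Proposition \ref{degree}(e)}, and your elimination scheme for (c) — dropping a block with $|\beta_{i_0}|=1$, with the three cases $i_0\ge 2$, $i_0=0$, $i_0=1$ handled by solving one relation and substituting into the others, then rescaling a single variable by a $\beta_{ij}$-th root (the one place algebraic closedness is used) — is exactly what the paper does, organized as an induction on $r$ rather than a case list. The real divergence is in (a): the paper disposes of it by citing Construction 1.7 and Lemma 1.8 of Hausen--Herppich--S\"uss and remarking that the construction is valid over any field, whereas you build the grading from scratch as the cokernel $G=\Z^n/R$ of the relation lattice, prove $G\cong\Z^{d-1}$ by computing the $\gcd$ of the maximal minors of the relation matrix (this is where the pairwise coprimality of the $d_i$ enters, via the two families of nonzero minors $\prod_i\beta_{ij_i}$ and $\pm\beta_{0j_0}\prod_{i\ne i_0}\beta_{ij_i}$), and obtain unmixedness and $B_0=k$ from the positive linear functional $\ell(e_{ij})=1/|\beta_i|$, which kills every $\rho_i$ and is strictly positive on the weight monoid minus the origin. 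This buys a self-contained, characteristic-free verification of exactly the claim the paper asserts by reference, at the cost of a page of linear algebra; the minor-computation step is the part the citation hides, and your treatment of it is sound (one could state more carefully that a prime dividing all type-(i) minors must divide some $d_{i_1}$ with $i_1\ge 1$, after which coprimality produces a type-(ii) minor it misses, but that is exactly the argument you sketch).
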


\begin{proof}
Part (a) follows from \cite{Hausen.Herppich.Suss.11}, Construction 1.7 and Lemma 1.8. The authors work over an algebraically closed ground field of characteristic zero, 
but the grading they construct is valid over any field.

Part (b) is implied by part (a) and {\it Proposition\,\ref{degree}(e)}.

For part (c), assume that $k$ is algebraically closed and that $r\ge 2$ and $|\beta_i|=1$ for some $i$. Then $n_i=1$, $\beta_{i1}=1$ and $T_i^{\beta_i}=t_{i1}$. 
If $i\ge 2$ then the generator $t_{i1}$ and the relation $T_0^{\beta_0}+\lambda_iT_1^{\beta_1}+t_{i1}=0$ can be eliminated, thus reducing both $n$ and $r$ by one. 
$\Delta^{\prime}$ is gotten by eliminating $n_i$, $\beta_i$ and $\lambda_i$ from $\Delta$, and re-indexing the data. 

If $i=0$ then the relations $t_{01}+\lambda_jT_1^{\beta_1}+T_j^{\beta_j}=0$, $j\ge 3$, become:
\[
-T_2^{\beta_2}+(\lambda_j-\lambda_2)T_1^{\beta_1}+T_j^{\beta_j}=0 \quad (3\le j\le r)
\]
The generator $t_{01}$ can be eliminated, thus reducing both $n$ and $r$ by one. 
$\Delta^{\prime}$ is gotten from $\Delta$ by eliminating $n_0$ and $\beta_0$, replacing $\lambda_j$ by $\lambda_j-\lambda_2$ when $j\ge 3$ and $t_{21}$ by $(-1)^{1/\beta_{21}}t_{21}$, and re-indexing the data. 

If $i=1$ then the relations $T_0^{\beta_0}+\lambda_jt_{11}+T_j^{\beta_j}=0$, $j\ge 3$, become:
\[
T_0^{\beta_0}+\lambda_j(\lambda_j-\lambda_2)^{-1}T_2^{\beta_2}+\lambda_2(\lambda_2-\lambda_j)^{-1}T_j^{\beta_j}=0\quad (3\le j\le r)
\]
The generator $t_{11}$ can be eliminated, thus reducing both $n$ and $r$ by one. 
Let $\tilde{\Delta}$ be gotten from $\Delta$ by eliminating $n_1$ and $\beta_1$, replacing $\lambda_j$ by $\lambda_j(\lambda_j-\lambda_2)^{-1}$ when $j\ge 3$, 
and re-indexing the data. Then $B\cong_kk[\tilde{\Delta}]$. The isomorphism of $k[\tilde{\Delta}] $ with $k[\Delta^{\prime}]$ is gotten by 
replacing $t_{j1}$ by 
\[
(\lambda_2(\lambda_2-\lambda_j)^{-1})^{1/\beta_{j1}}t_{j1} 
\]
for $3\le j\le r$. 
\end{proof} 

The grading $\mathfrak{g}$ of $B$ described in the proof of part (a) above is the {\bf grading of $B$ induced by $\Delta$}. 
The fixed points of the torus action on $X$ induced by this grading are determined by $B_0$ and, since $B_0=k$, we see that 
the $\Delta$-origin $p\in X$ is the unique fixed point of this torus action.

\begin{theorem}\label{generators} Let $B=k[\Delta ]$ for reduced trinomial data $\Delta$ using a partition of the integer $n\ge 2$.
Let $X={\rm Spec}(B)$ and let $p\in X$ be the $\Delta$-origin of $X$. 
The dimension of the tangent space to $X$ at $p$ equals $n$. Consequently, 
the minimum number of generators of $B$ as a $k$-algebra is $n$. 
\end{theorem}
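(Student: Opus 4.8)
The plan is to compute the Zariski cotangent space $\mathfrak{m}/\mathfrak{m}^2$ at $p$ directly, exploiting the fact that reducedness forces every defining relation to have vanishing linear part. Write $R=k[T_0,\hdots ,T_r]=k^{[n]}$ and let $\mathfrak{n}\subset R$ be the maximal ideal generated by all the variables $t_{ij}$, so that $R/\mathfrak{n}=k$ and $\mathfrak{n}/\mathfrak{n}^2$ has dimension $n$. Let $I=(f_i)_{2\le i\le r}$ be the ideal generated by the trinomials $f_i=T_0^{\beta_0}+\lambda_iT_1^{\beta_1}+T_i^{\beta_i}$, so that $B=R/I$ and $\mathfrak{m}=\mathfrak{n}/I$. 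Since every monomial $T_j^{\beta_j}$ vanishes at the $\Delta$-origin, we have $I\subset\mathfrak{n}$, hence $\mathfrak{m}$ is a maximal ideal with $B/\mathfrak{m}=k$; in particular $p$ is a $k$-rational point and $\dim T_pX=\dim_k\mathfrak{m}/\mathfrak{m}^2$.

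The key step is the standard identification $\mathfrak{m}/\mathfrak{m}^2\cong\mathfrak{n}/(\mathfrak{n}^2+I)$, together with the claim that $I\subset\mathfrak{n}^2$. When $r=1$ there are no relations, so $I=0\subset\mathfrak{n}^2$ trivially and $B=k^{[n]}$. When $r\ge 2$, reducedness gives $|\beta_j|\ge 2$ for every $j$, so each of the three monomials $T_0^{\beta_0}$, $T_1^{\beta_1}$, $T_i^{\beta_i}$ has total degree at least two and therefore lies in $\mathfrak{n}^2$; thus every $f_i\in\mathfrak{n}^2$ and $I\subset\mathfrak{n}^2$. In either case $\mathfrak{n}^2+I=\mathfrak{n}^2$, so $\mathfrak{m}/\mathfrak{m}^2\cong\mathfrak{n}/\mathfrak{n}^2$ and $\dim_k\mathfrak{m}/\mathfrak{m}^2=n$. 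This establishes that the tangent space to $X$ at $p$ has dimension $n$.

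For the final assertion I would argue both inequalities. The $n$ elements $t_{ij}$ generate $B$ as a $k$-algebra, so the minimum number of generators is at most $n$. Conversely, suppose $B=k[b_1,\hdots ,b_s]$; replacing each $b_\ell$ by $b_\ell-b_\ell(p)$, where $b_\ell(p)\in k$ denotes the value at the $k$-rational point $p$, we may assume each $b_\ell\in\mathfrak{m}$. A routine argument then shows these elements generate $\mathfrak{m}$ as an ideal, so their images span $\mathfrak{m}/\mathfrak{m}^2$ over $k$, giving $s\ge\dim_k\mathfrak{m}/\mathfrak{m}^2=n$. Hence the minimum number of $k$-algebra generators of $B$ equals $n$.

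The computation is elementary once this framework is in place; the one point that genuinely requires the hypothesis is the inclusion $I\subset\mathfrak{n}^2$, which is exactly where reducedness enters. Were some $|\beta_j|=1$, the corresponding relation would contribute a nonzero linear form and collapse part of the cotangent space, lowering the tangent dimension below $n$; the reduced condition is precisely what rules this out.
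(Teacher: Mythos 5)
Your proof is correct and follows essentially the same route as the paper: the paper computes $\dim_k(\mathfrak{m}/\mathfrak{m}^2)=n-\operatorname{rank}(J(p))$ and observes that $\operatorname{rank}(J(p))=0$ because $|\beta_i|\ge 2$ kills every partial derivative at $p$, which is exactly your observation that reducedness forces $I\subset\mathfrak{n}^2$ so that $\mathfrak{m}/\mathfrak{m}^2\cong\mathfrak{n}/\mathfrak{n}^2$. Your write-up also spells out the standard cotangent-space lower bound for the number of algebra generators, which the paper leaves implicit.
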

\begin{proof} By definition of reduced, there are two cases to consider.

Assume that $r=1$ and $\beta_{01}=\beta_{11}=1$. Then $X$ is an affine space of dimension $n$ and the theorem holds in this case. 

Assume that $r\ge 2$ and $|\beta_i|\ge 2$ for each $i\in\{ 0,\hdots ,r\}$.
Let $m$ be the minimum number of generators of $B$ over $k$. Then clearly $m\le n$. 
Given $i$ with $2 \leq i \leq r$, let $f_i = T_0^{\beta_0} + \lambda_iT_1^{\beta_1} + T_i^{\beta_i}$ in the polynomial ring $k[T_0,\hdots ,T_r]=k[t_{ij} \ | \ 0\leq i\leq r, 1\leq j\leq n_i]$. Let $J$ be the Jacobian matrix of $(f_2,\hdots ,f_r)$, namely:
\[
J=\left( \frac{\partial f_{\ell}}{\partial t_{ij}}\right)_{2\le \ell\le r\, ,\,  0\leq i\leq r\, ,\,  1\leq j\leq n_i}
\]
Then $J$ is of dimension $(r-1)\times n$. For a closed point $q \in X$, we denote by $J(q)$ the Jacobian matrix at $q$, that is:
\[
J(q)=\left( \frac{\partial f_{\ell}}{\partial t_{ij}}(q)\right)_{2\le \ell\le r\, ,\,  0\leq i\leq r\, ,\,  1\leq j\leq n_i}
\]
Let $\mathfrak{m}$ be the maximal ideal of $p\in X$. Since $|\beta_i|\ge 2$ for each $0\leq i\leq r$, we see that ${\rm rank} (J(p)) = 0$, and we have: 
	\begin{center}
	  $\dim_k (\mathfrak{m} / {\mathfrak{m}}^2) = n - {\rm rank} (J(p)) = n$
	\end{center}
Therefore, the dimension of the tangent space at $p$ is $n$, which implies $m \geq n$. 
\end{proof}

\begin{corollary} \label{smooth}Assume that $k$ is algebraically closed. Given trinomial data $\Delta$ over $k$, if $X={\rm Spec}(k[\Delta ])$ is smooth then $X\cong_k\A_k^n$. 
\end{corollary}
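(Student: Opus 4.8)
The plan is to reduce immediately to the case of reduced trinomial data and then compare the embedding dimension of $X$ at the $\Delta$-origin with the Krull dimension of $k[\Delta]$. First I would invoke {\it Proposition\,\ref{tri-data}(c)}: since $k$ is algebraically closed, there is reduced trinomial data $\Delta'$ with $k[\Delta]\cong_kk[\Delta']$, and because smoothness of ${\rm Spec}$ is an isomorphism invariant, it is harmless to replace $\Delta$ by $\Delta'$. So I assume $\Delta$ is reduced, using a partition of the integer $n\ge 2$, and recall that "reduced" leaves exactly two possibilities: (i) $r=1$ with $\beta_{01}=\beta_{11}=1$, or (ii) $r\ge 2$ with $|\beta_i|\ge 2$ for every $i$.

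In case (i) there is nothing to prove: by observation (1) following the definition of $k[\Delta]$ we have $k[\Delta]=k^{[n]}$, so $X\cong_k\A^n_k$ outright. The content is to show that case (ii) is incompatible with smoothness. Let $p\in X$ be the $\Delta$-origin. By {\it Theorem\,\ref{DFN}} the ring $k[\Delta]$ is a domain of dimension $n-r+1$, so $X$ is an irreducible variety with $\dim X=n-r+1$; and by {\it Theorem\,\ref{generators}} the Zariski tangent space $(\mathfrak{m}/\mathfrak{m}^2)^\vee$ at $p$ has dimension $n$. Since $k$ is algebraically closed, $X$ is smooth at the closed point $p$ precisely when $\dim_k(\mathfrak{m}/\mathfrak{m}^2)=\dim X$. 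In case (ii) this would force $n=n-r+1$, i.e. $r=1$, contradicting $r\ge 2$. Hence $X$ is singular at $p$, so a smooth $X$ cannot arise from reduced data of type (ii), and we conclude $X\cong_k\A^n_k$.

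There is no real obstacle here beyond bookkeeping, but two points deserve care. The first is the legitimacy of the reduction step: I must note that {\it Proposition\,\ref{tri-data}(c)} requires $k$ algebraically closed (which is assumed) and that passing to an isomorphic coordinate ring preserves smoothness. The second, and the only genuinely substantive point, is that smoothness everywhere is far more than we need: it suffices that $X$ be smooth at the single point $p$, and {\it Theorem\,\ref{generators}} computes the embedding dimension exactly at that point. Thus the whole argument reduces to the inequality $n>n-r+1$ for $r\ge 2$, reading off that the local ring $\mathcal{O}_{X,p}$ is non-regular whenever $\Delta$ is reduced of type (ii).
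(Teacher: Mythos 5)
Your proof is correct and follows essentially the same route as the paper's: reduce to reduced trinomial data via Proposition \ref{tri-data}(c), use Theorem \ref{generators} to see that the tangent space at the $\Delta$-origin has dimension $n$, and compare with $\dim X=n-r+1$ from Theorem \ref{DFN} to force $r=1$. The explicit case split on the two forms of ``reduced'' and the remark that smoothness at the single point $p$ suffices are just slightly more verbose packagings of the paper's argument.
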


\begin{proof} Assume that $X$ is smooth and let $d$ be the dimension of $X$ as a $k$-variety. 
By {\it Proposition\,\ref{tri-data}}, we may assume that $\Delta$ is reduced. 
Then {\it Theorem\,\ref{generators}} implies that $d=n$. From {\it Theorem\,\ref{DFN}} we 
have $d=n-r+1$. Combining these gives $r=1$, so $k[\Delta ]\cong_kk^{[n]}$. 
\end{proof}

See also \cite{Kambayashi.Russell.82}, Theorem 2.5. 

\begin{corollary}\label{tri-data2} Let $k$ be a field and $B=k[\Delta ]$ for reduced trinomial data $\Delta$ over $k$.
Set $d=\dim B$ and let $\bar{t}_{ij}$ be the image of the variable $t_{ij}$ in $k[\Delta ]$. 
There is an ordering $\{ h_m\}_{1\le m\le n}$ of the set $\{ \bar{t}_{ij}\, |\, 0\le i\le r\, ,\, 1\le j\le n_i\}$ 
making $\vec{h}:=\{ h_m\}_{0\le m\le n}$ a complete homogeneous signature sequence for $(B,\deg_{\mathfrak{g}})$, where $h_0=1$.
\end{corollary}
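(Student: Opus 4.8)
The plan is to reduce the statement to \emph{Proposition \ref{criteria}}, which asserts that a set of homogeneous generators containing $x_0=1$, listed in nondecreasing degree and with no element lying in the algebra generated by its predecessors, is exactly a complete homogeneous signature sequence. First I would fix the setup. By \emph{Proposition \ref{tri-data}(a)} the grading $\mathfrak{g}$ is effective and unmixed with $B_0=k$ and each $\bar{t}_{ij}$ homogeneous, and since $B$ is affine its weight monoid $M_{\mathfrak{g}}(B)$ is finitely generated. Unmixedness together with \emph{Proposition \ref{unmixed-monoid}} then supplies a total order on $G=\Z^{d-1}$ for which $\deg_{\mathfrak{g}}(B)=M_{\mathfrak{g}}(B)\subset G_+$, and I use this order throughout. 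I then set $h_0=1$ and let $h_1,\dots,h_n$ be the $\bar{t}_{ij}$ listed in an order of nondecreasing degree. Since $B_0=k$ and no $\bar{t}_{ij}$ is a constant, each $h_m$ with $m\ge 1$ has $\deg_{\mathfrak{g}}h_m>0$, and the degree inequality in condition (1) of \emph{Proposition \ref{criteria}} holds by construction.

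The heart of the matter is the nonredundancy condition $h_{m+1}\not\in k[h_1,\dots,h_m]$, which I would route through the cotangent space at the $\Delta$-origin. Let $\mathfrak{m}$ be the maximal ideal generated by the $\bar{t}_{ij}$. Because these are homogeneous of positive degree and generate $B$ as a $k$-algebra, $\mathfrak{m}$ coincides with the homogeneous irrelevant ideal $\bigoplus_{g>0}B_g$, and the images of the $\bar{t}_{ij}$ span $\mathfrak{m}/\mathfrak{m}^2$. By \emph{Theorem \ref{generators}}, $\dim_k(\mathfrak{m}/\mathfrak{m}^2)=n$, so these $n$ images form a $k$-basis of $\mathfrak{m}/\mathfrak{m}^2$; in particular they are linearly independent.

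Now suppose for contradiction that $h_{m+1}\in k[h_1,\dots,h_m]$ for some $m$. Since $h_{m+1}$ is homogeneous of degree $d_{m+1}>0$ and $k[h_1,\dots,h_m]$ is a graded subalgebra, I may assume $h_{m+1}$ is a $k$-linear combination of monomials $h_1^{e_1}\cdots h_m^{e_m}$, each of degree $d_{m+1}$. Reducing modulo $\mathfrak{m}^2$, every monomial with $\sum_i e_i\ge 2$ vanishes, there is no constant term as $d_{m+1}>0$, and the surviving terms are exactly the $h_i$ with $i\le m$ and $d_i=d_{m+1}$. Hence the image of $h_{m+1}$ in $\mathfrak{m}/\mathfrak{m}^2$ is a $k$-combination of the images of $h_1,\dots,h_m$, contradicting their linear independence. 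This establishes condition (1) of \emph{Proposition \ref{criteria}}, whence that proposition gives that $\vec{h}=\{h_m\}_{0\le m\le n}$ is a complete homogeneous signature sequence for $(B,\deg_{\mathfrak{g}})$.

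I expect the last step to be the main obstacle: translating the combinatorial nonredundancy condition of \emph{Proposition \ref{criteria}} into linear independence in the cotangent space. The two features that make it work are the homogeneity of the $\bar{t}_{ij}$ and the positivity of the chosen grading, which together guarantee that, modulo $\mathfrak{m}^2$, only the equal-degree linear part of a homogeneous relation can survive; everything else is either quadratic (hence in $\mathfrak{m}^2$) or constant (hence absent in positive degree).
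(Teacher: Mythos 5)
Your proof is correct and follows essentially the same route as the paper: order the $\bar t_{ij}$ by nondecreasing degree, derive the nonredundancy condition from the tangent-space computation of \emph{Theorem~\ref{generators}}, and conclude via \emph{Proposition~\ref{criteria}}. The only difference is cosmetic: the paper simply cites the consequence of \emph{Theorem~\ref{generators}} that no $n-1$ of the generators suffice, whereas you re-derive the linear independence in $\mathfrak{m}/\mathfrak{m}^2$ explicitly, which is exactly the mechanism inside that theorem's proof.
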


\begin{proof}
Since $\deg_{\mathfrak{g}}(B)$ is a totally ordered set, there is an ordering $\{ h_m\}_{1\le m\le n}$ of the set 
\[
\{ \bar{t}_{ij}\, |\, 0\le i\le r\, ,\, 1\le j\le n_i\}
\]
 such that $\deg_{\mathfrak{g}} h_1\leq\cdots\leq\deg_{\mathfrak{g}} h_n$. By {\it Theorem\,\ref{generators}}, 
\[
h_{i+1}\not\in k[h_1,\ldots,h_{i},h_{i+2},\ldots,h_n]
\]
and, in particular, $h_{i+1}\not\in k[h_1,\ldots,h_{i}]$. {\it Proposition\,\ref{criteria}} implies that $\vec{h}:=\{ h_m\}_{0\le m\le n}$ is a complete homogeneous signature sequence for $(B,\deg_{\mathfrak{g}})$. 
\end{proof}


\section{Graded Unirational UFDs}\label{Dim-n}

\begin{theorem}\label{classify}  Assume that the field $k$ is algebraically closed. Let $B$ be a unirational UFD of finite transcendence degree $n\ge 1$ over $k$.
If $B$ admits an effective unmixed $\Z^{n-1}$-grading with $B_0=k$, then $B\cong_kk[\Delta ]^{[m]}$ for some reduced trinomial data $\Delta$ and $m\in\N$. 
\end{theorem}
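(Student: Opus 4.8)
The plan is to upgrade the hypotheses to the affine setting by way of Knop's theorem, to extract from $B$ a finite complete homogeneous signature sequence, and then to read the trinomial data off the degrees and relations of that sequence. Throughout I would fix a total order making $(G,<)=(\Z^{n-1},<)$ a totally ordered abelian group with $\deg_{\mathfrak{g}}(B)\subset G_+$; this is possible by \emph{Proposition~\ref{unmixed-monoid}} since the grading is unmixed, and it makes $\mathcal{F}_0=B_0=k$ by \emph{Proposition~\ref{degree}(e)}.

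First I would show that $B$ is affine. Since $B$ is a UFD it is normal, and the effective $\Z^{n-1}$-grading with $B_0=k$ corresponds, as recorded after \emph{Theorem~\ref{Knop}}, to a complexity-one action of the torus $T=\G_m^{n-1}$ on $X=\mathrm{Spec}(B)$ with $B^T=B_0=k$. Because $B$ is unirational of transcendence degree $n$ with grading rank $n-1=n-1$, \emph{Corollary~\ref{unirational}(i)} gives that $B$ is actually rational, with $K\cong_k k^{(1)}$, so $X$ is a normal unirational $T$-variety of complexity one. \emph{Knop's Theorem~\ref{Knop}} then forces $B=k[X]$ to be finitely generated over $k$, and the problem is reduced to the situation of \emph{Corollary~\ref{affineUFD}}.

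Next I would build the signature sequence. Using the $\Z$-basis of \emph{Proposition~\ref{unmixed-monoid}(2)} and a finite homogeneous generating set of the now-affine algebra $B$, I would check that $\deg_{\mathfrak{g}}$ is of finite type: every generator $h_i$ with $i\ge 1$ has $d_i\ne 0$ (as $B_0=k$), hence a strictly positive coordinate in that basis, which bounds the exponents in any monomial of a fixed degree and forces $\dim_k\mathcal{F}_g<\infty$. \emph{Lemma~\ref{complete}} then yields a complete signature sequence, which by \emph{Lemma~\ref{ADD}} may be taken homogeneous; since $B$ is finitely generated and $B=k[\vec h]=\bigcup_N k[h_1,\dots,h_N]$, the sequence terminates, giving a finite $\vec h=\{h_0,\dots,h_N\}$ with $h_0=1$ and $B=k[h_1,\dots,h_N]$. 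By \emph{Proposition~\ref{fact-closed}} each $h_i$ is prime, each $k[h_i]$ is factorially closed, and $k[h_m,h_n]\cong k^{[2]}$ for $0<m<n$; this is the rigidity I would exploit to control the relations.

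The core of the argument is to organize the primes $h_1,\dots,h_N$ into the $r+1$ blocks of a trinomial presentation. I would invoke \emph{Proposition~\ref{Laurent}}, which gives $S^{-1}B\cong R^{[\pm(n-1)]}$ with $R=(S^{-1}B)_0$ a localization $k[x]_f$ of $k^{[1]}$ and $\mathrm{frac}(R)=K=k(x)$ (the proof of \emph{Corollary~\ref{affineUFD}}). Each homogeneous prime $h_i$ becomes, in $S^{-1}B$, a unit Laurent monomial times an element $\rho_i\in R\subset k(x)$; viewing $k(x)$ as the function field of $\PP^1$, the finitely many ``special'' closed points $p_0,\dots,p_r$ — those over which several $h_i$ concentrate or over which a prime appears with multiplicity — define the blocks, with $n_\ell$ the number of $h_i$ over $p_\ell$ and the exponents $\beta_{\ell j}$ read from the orders of vanishing, equivalently from the requirement that each block-monomial $T_\ell^{\beta_\ell}$ be homogeneous of one common degree $\delta$. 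Normalizing three of the points to $0,\infty$ and a third via an automorphism of $\PP^1$, the affine-linear dependencies among the $\rho_\ell/\rho_0$ in the coordinate $\xi=\rho_1/\rho_0$ translate into exactly the $r-1$ trinomial relations $T_0^{\beta_0}+\lambda_iT_1^{\beta_1}+T_i^{\beta_i}=0$, with the $\lambda_i\in k^*$ recovered from cross-ratios of the special points; the coprimality condition~($\Delta$.2) is forced by factoriality, being precisely the condition making $k[\Delta]$ a UFD (cf.\ \emph{Theorem~\ref{DFN}}). The signature elements concentrated over no special point are algebraically independent over the trinomial subring and account for the polynomial factor, yielding $B\cong_k k[\Delta]^{[m]}$, after which \emph{Proposition~\ref{tri-data}(c)} replaces $\Delta$ by reduced data.

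I expect the final paragraph to contain the main obstacle. Assembling the affine reduction and the signature sequence is largely a matter of combining earlier results, but proving that \emph{the only relations among the $h_i$ are trinomial} — that the block decomposition is well defined and that all cross-block dependencies collapse to the three-term form dictated by the rank-one field $K=k(x)$ — is the delicate point. Controlling it will require a careful induction along the signature sequence, using factorial closedness (\emph{Proposition~\ref{fact-closed}}) and the highest-degree-summand analysis of \emph{Proposition~\ref{intersect}} and \emph{Proposition~\ref{criteria}}, together with a precise comparison between the degree monoid $\deg_{\mathfrak{g}}(B)\subset\Z^{n-1}$ and its one-parameter quotient recorded by $K=k(x)$; cleanly separating the trinomial part from the free polynomial part is likely the subtlest bookkeeping.
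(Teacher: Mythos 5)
Your reduction to the affine, rational case is exactly the paper's: Knop's theorem (\emph{Theorem~\ref{Knop}}) gives finite generation, and \emph{Corollary~\ref{unirational}}/\emph{Corollary~\ref{affineUFD}} give $K_0\cong k^{(1)}$. But the heart of the theorem --- that the generators admit \emph{only} trinomial relations --- is precisely the part you defer to ``a careful induction'' in your last paragraph, and the sketch you give in its place (special points of $\PP^1$, orders of vanishing, cross-ratios) does not contain the argument. The paper's mechanism is concrete and different in character: (i) any $n$ of the homogeneous generators have degrees spanning a group of rank exactly $n-1$, so some ratio of coprime monomials in them lies in $K_0$; (ii) writing that ratio as $F(u,v)/G(u,v)$ for coprime forms $F,G$ over the algebraically closed field and factoring into linear forms, the pairwise coprimality of the $h_i$ (from \emph{Proposition~\ref{fact-closed}}) forces each linear factor $L_j(u,v)$ to \emph{be} a monomial in the $h_i$, whence $K_0=k(\mathbf{y}^{\mathbf{s}}\mathbf{z}^{-\mathbf{t}})$ for a single pair of coprime monomials of positive exponents; (iii) since any two presentations $K_0=k(u/v)=k(u'/v')$ with coprime numerators and denominators differ by a $GL_2(k)$ substitution, each remaining generator $w_L$ satisfies $\mathbf{x}^{\mathbf{r}_L}w_L^{e_L}=a_L\mathbf{y}^{\mathbf{s}}+b_L\mathbf{z}^{\mathbf{t}}$ with $a_L,b_L\in k^*$, and pairwise linear independence of the $(a_L,b_L)$ forces the $\mathbf{x}^{\mathbf{r}_L}$ to be pairwise coprime. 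None of steps (i)--(iii) appears in your proposal, and your appeal to \emph{Theorem~\ref{DFN}} for condition~($\Delta$.2) is backwards: you must \emph{prove} the extracted exponents satisfy $\gcd(d_i,d_j)=1$ (the paper does so by noting a common divisor $D>1$ would give $K_0=k(\zeta^D)\subsetneq k(\zeta)$), not cite the fact that UFDs of trinomial type satisfy it.

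A secondary but genuine error: your finite-type argument fails for the lexicographic order supplied by \emph{Proposition~\ref{unmixed-monoid}(2)}. Bounding the monomials of degree \emph{exactly} $g$ does not bound $\dim_k\mathcal{F}_g$, because $\mathcal{F}_g$ collects all degrees $\le g$ and under lex order infinitely many elements of $\N^{n-1}$ lie below a fixed $g$ (already $B=k[s,t]$ with $\deg s=(1,0)$, $\deg t=(0,1)$ has $k[t]\subset\mathcal{F}_{(1,0)}$). The paper sidesteps this entirely: it takes a minimal homogeneous generating set ordered by degree and applies \emph{Proposition~\ref{criteria}} directly, which needs no finite-type hypothesis. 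You could repair your route by choosing an order embedding $\Z^{n-1}\hookrightarrow\R$ with $\Q$-independent positive weights, but as written the step is invalid.
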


\begin{proof} By {\it Theorem\,\ref{Knop}}, $B$ is affine over $k$, and by {\it Corollary\,\ref{affineUFD}}, $B$ is rational over $k$. 
Let $\mathfrak{g}$ be the given grading of $B$ with decomposition $B=\oplus_{g\in G}B_g$ and set $\deg =\deg_{\mathfrak{g}}$. By hypothesis, the degree monoid of $M_{\mathfrak{g}}(B)$ is unmixed, 
and by {\it Proposition\,\ref{unmixed-monoid}}, there exists a total order $<$ on $\Z^{n-1}$ making $M_{\mathfrak{g}}(B)$ a well-ordered set. 
The remainder of the proof consists of showing the following three statements. 
\begin{enumerate}
\item 
There exist ${\bf x}=(x_1,\hdots ,x_l)\in B^l$, ${\bf y}=(y_1,\hdots ,y_p)\in B^p$, ${\bf z}=(z_1,\hdots ,z_q)\in B^q$ and ${\bf w}=(w_1,\hdots ,w_N)\in B^N$ with $l,p,q,N\in\N$, $l+p+q=n$ and $p,q\ge 1$, such that
\[
B=k[{\bf x},{\bf y},{\bf z},{\bf w}] \quad {\rm and}\quad {\bf x}^{{\bf r}_i}w_i^{e_i}+a_i{\bf y}^{\bf s}+b_i{\bf z}^{\bf t}=0 \,\, (1\le i\le N)
\]
where ${\bf r}_i\in\N^l$, ${\bf s}\in\N^p$, ${\bf t}\in\N^q$, $e_i\in\Z_+$ for $1\le i\le N$, and $(a_1,b_1),\hdots ,(a_N,b_N)\in (k^*)^2$ are pairwise linearly independent. 
\item The following properties hold:
\subitem (a) the components of ${\bf x},{\bf y},{\bf z}$ and ${\bf w}$ form a (finite) complete homogeneous signature sequence $\vec{h}$ for $(B,\deg )$ in some order, and $N=|\vec{h}|-n$;
\subitem (b) the components of ${\bf x},{\bf y}$ and ${\bf z}$ form a set of $n$ algebraically independent elements of $\vec{h}$ whose degrees generate a group of rank $n-1$;
\subitem (c) $\tau:=\deg {\bf x}^{{\bf r}_i}w_i^{e_i}=\deg {\bf y}^{\bf s}=\deg {\bf z}^{\bf t}$ for each $i$; 
\subitem (d) if $K_0=\{ u/v \, |\, u,v\in B_g\, ,\, v\ne 0\, ,\, g\in G\}$ then $K_0=k({\bf y}^{\bf s}{\bf z}^{-{\bf t}})$;
\subitem (e) If $u$ and $v$ are $k$-linearly independent monomials in elements of $\vec{h}$ of degree $\tau$, then $K_0=k(u/v)$ and $\gcd_B(u,v)=1$.
\item $B\cong_kk[\Delta ]^{[m]}$ for some $m\in\N$ and trinomial data $\Delta$. 
\end{enumerate}

Let $K_0=\{ u/v \, |\, u,v\in B_g\, ,\, v\ne 0\, ,\, g\in G\}$. By {\it Corollary\, \ref{unirational}}, $K_0=k(\zeta )=k^{(1)}$ for some $\zeta\in K_0$.
Let $\zeta =u/v$ for $u,v\in B_{\delta}$ for $\delta\in G$ and $\gcd_B(u,v)=1$. 
If $K_0=k(u'/v')$ for $u',v'\in B_g$ with $\gcd_B(u',v')=1$, then $g=\delta$ and there exists $A\in GL_2(k)$ such that $(u',v')=(u,v)A$. In order to see this, write:
\[
\frac{u'}{v'}=\frac{au+bv}{cu+dv} \quad {\rm for} \quad \begin{pmatrix} a&b\cr c&d\end{pmatrix}\in GL_2(k)
\]
Then $u'(cu+dv)=v'(au+bv)$. Since $\gcd_B(u',v')=1$, $u'$ divides $au+bv$ and $v'$ divides $cu+dv$. Therefore, $g\le\delta$, and by symmetry, $\delta\le g$, so $g=\delta$. It follows that
$u'=\lambda (au+bv)$ and $v'=\mu(cu+dv)$ for some $\lambda ,\mu\in k^*$. 

If $B\cong k^{[n]}$, then $B=k[\Delta ]$ for trinomial data with $r=1$. Assume hereafter that $B\not\cong k^{[n]}$. 
Since $B$ is affine, $n=\dim B$ and $B\not\cong k^{[n]}$, there exists 
$N_0\geq n+1$ and $\vec{h}=\{h_0,h_1,\ldots,h_{N_0}\}\subset B$ which are homogeneous such that $h_0=1$ and $\{h_1,\ldots,h_{N_0}\}$ is a minimal system of generators of $B$, that is, $B=k[h_1,\ldots,h_N]$ and $h_i\not\in k[h_1,\ldots,h_{i-1},h_{i+1},\ldots,h_{N_0}]$ for any $i$. By reordering, we may assume that $\deg h_i\leq\deg h_{i+1}$ for $i\in\{1,\ldots,N_0-1\}$. {\it Proposition\, \ref{criteria}} implies that $\vec{h}$ is a complete homogeneous signature sequence for $(B,\deg)$. 
By {\it Proposition\, \ref{fact-closed}}, the $h_i$ are pairwise relatively prime in $B$. 
Let $\{ d_i\}\subset G$ be the degree sequence of $\vec{h}$, $0\le i\le\vert\vec{h}\vert$.  
We call ${\bf h}^{\bf r}\in B$ a {\bf monomial} if it is of the form ${\bf h}^{\bf r}=h_{i_1}^{r_1}\cdots h_{i_s}^{r_s}$, where $(h_{i_1},\hdots ,h_{i_s})$ is a finite subsequence of 
$\vec{h}$ and ${\bf r}=(r_1,\hdots ,r_s)\in\N^s$. 

\medskip

\noindent {\bf Claim 1.} {\it Assume that ${\bf h}_1^{{\bf r}_1},{\bf h}_2^{{\bf r}_2}\in B$ are relatively prime monomials such that ${\bf h}_1^{{\bf r}_1}{\bf h}_2^{-{\bf r}_2}\in K_0$, where ${\bf r}_1\in\N^{s_1}$
and ${\bf r}_2\in\N^{s_2}$. There exist $\ell\in\N$, $\ell\ge 1$, and ${\bf t}_j\in\N^{s_1}$, ${\bf w}_j\in\N^{s_2}$, $1\le j\le\ell$, such that $\sum_j{\bf t}_j={\bf r}_1$, $\sum_j{\bf w}_j={\bf r}_2$, and 
$K_0=k\left( {\bf h}_1^{{\bf t}_i}{\bf h}_2^{-{\bf w}_j}\right)$ for each pair $i,j$.}

\medskip

\noindent {\it Proof of Claim 1.} Let $k[X,Y]\cong k^{[2]}$. By hypothesis, there exist standard homogeneous $F,G\in k[X,Y]=k^{[2]}$ of the same degree $\ell$ such that:
\[
{\textstyle\gcd_{k[X,Y]}(F(X,Y),G(X,Y))}=1 \quad {\rm and}\quad {\bf h}_1^{{\bf r}_1}\,G(u,v)={\bf h}_2^{{\bf r}_2}F(u,v)
\]
Let $L_i,M_j\in k[X,Y]$, $1\le i,j\le\ell$, be linear forms such that $F=L_1\cdots L_{\ell}$ and $G=M_1\cdots M_{\ell}$. 
If $\lambda\in B$ is prime and $F(u,v),G(u,v)\in\lambda B$, then $L_i(u,v),M_j(u,v)\in\lambda B$ for some $i,j$. 
Since $L_i$ and $M_j$ are linearly independent, $u,v\in\lambda B$, which implies $\lambda\in k^*$. 
Therefore, $\gcd_B(F(u,v),G(u,v))=1$. From the equality ${\bf h}_1^{{\bf r}_1}\,G(u,v)={\bf h}_2^{{\bf r}_2}F(u,v)$ it follows that, for each $j$, $1\le j\le\ell$, 
\[
L_j(u,v)={\bf h}_1^{{\bf t}_j} \quad {\rm and}\quad M_j(u,v)={\bf h}_2^{{\bf w}_j}
\]
where ${\bf t}_j\in\N^{s_1}$ with $\sum_j{\bf t}_j={\bf r}_1$, and ${\bf w}_j\in\N^{s_2}$ with $\sum_j{\bf w_j}={\bf r}_2$.
Since $(L_i,M_j)\in GL_2(k)$ for each pair $i,j$, it follows that:
\[
K_0=k(u/v)=k\left( L_i(u,v)/M_j(u,v)\right) =k\left( {\bf h}_1^{{\bf t}_i}{\bf h}_2^{-{\bf w}_j}\right)
\]
This proves {\it Claim\,1}. 

\medskip

\noindent {\bf Claim 2.} 
{\it For $m\in\Z$ with $m\geq n$, let $\{i_1,\ldots,i_m\}$ be a subset of $\Z$ of cardinality $m$, where $1\le i_j\le N_0$ for each $j$. There exist $\epsilon_1,\hdots ,\epsilon_m\in\Z$ such that $K_0=k(h_{i_1}^{\epsilon_1}\cdots h_{i_m}^{\epsilon_m})$. 
In particular, if $fg^{-1}=h_{i_1}^{\epsilon_1}\cdots h_{i_m}^{\epsilon_m}$ for $f,g\in B$ with $\gcd_B(f,g)=1$, then $f,g\in k[h_{i_1},\ldots, h_{i_m}]\cap B_{\delta}$. }
\medskip

\noindent {\it Proof of Claim 2.} 
Define the mapping $\phi :\Z^m\to G$, $\phi (r_1,\hdots ,r_m)=r_1d_{i_1}+\cdots +r_md_{i_m}$. Since $\phi$ has a nonzero kernel element, {\it Claim\,2} is implied by {\it Claim\,1}. 
\medskip

In order to prove parts (1) and (2), let $E=\{ i_1,\hdots ,i_n\}$ be a subset of $\{ 1,\hdots ,N_0\}$ such that $h_{i_1},\ldots,h_{i_n}$ are algebraically independent over $k$. 
Let $r={\rm rank}\langle d_{i_1},\hdots ,d_{i_n}\rangle$ and let $A=k[h_{i_1},\ldots,h_{i_n}]$. 
Since $A$ is a graded subalgebra of $B$ and the rank of its grading is $r$, {\it Proposition\,\ref{Laurent}} shows that ${\rm frac}(A)\cong L_0^{(r)}$, where $L_0={\rm frac}(A)\cap K_0$. {\it Claim\;2} implies that $L_0\cong k^{(1)}$. 
Therefore, 
	\[
	k^{(n)}\cong{\rm frac}(A)\cong L_0^{(r)}\cong k^{(r+1)}
	\]
which implies that $r=n-1$. 

By {\it Claim\,2}, there are $p,s\in\N$ with $1\le p<s\le n$, $(\epsilon_1,\hdots ,\epsilon_s)\in\Z_+^s$ and a permutation $\sigma$ of $\{i_1,\hdots ,i_n\}$ such that $K_0=k(h_{\sigma (i_1)}^{\epsilon_1}\cdots h_{\sigma (i_p)}^{\epsilon_p}h_{\sigma (i_{p+1})}^{-\epsilon_{p+1}}\cdots h_{\sigma (i_s)}^{-\epsilon_s})$. 
Set $q=s-p>0$, $\ell=n-s\geq 0$ and 
\begin{align*}
{\bf y}&=(y_1,\ldots,y_p)=(h_{\sigma(i_1)},\ldots,h_{\sigma(i_p)}), \\
{\bf z}&=(z_1,\ldots,z_q)=(h_{\sigma(i_{p+1})},\ldots,h_{\sigma(i_{s})}), \\
{\bf x}&=(x_1,\ldots,x_{\ell})=(h_{\sigma(i_{s+1})},\ldots,h_{\sigma(i_n)}), \\
{\bf s}&=(s_1,\ldots,s_{p})=(\epsilon_1,\hdots ,\epsilon_p),\\ 
{\bf t}&=(t_1,\ldots,t_{q})=(\epsilon_{p+1},\hdots ,\epsilon_{s}). 
\end{align*} 
Then $K_0=k({\bf y}^{\bf s}{\bf z}^{-\bf t})$, $\ell+p+q=(n-s)+p+(s-p)=n$ and the components of ${\bf x},{\bf y}$ and ${\bf z}$ form a set of $n$ algebraically independent elements of $\vec{h}$ whose degrees generate a group of rank $n-1$. 

Let $L\in \{ 1,\hdots ,N_0\}\setminus E$ be given. Since ${\rm rank}\langle d_{i_1},\hdots ,d_{i_n}\rangle =n-1$, there exists $e_L\in\Z\setminus\{0\}$ such that $e_Ld_L\in\langle d_{i_1},\hdots ,d_{i_n}\rangle$.  By the same proof used for {\it Claim\,2}, there exist ${\bf r}_L\in\Z^{\ell}$, ${\bf s}_L\in\Z^p$ and ${\bf t}_L\in\Z^q$ such that $k({\bf x}^{{\bf r}_L}{\bf y}^{{\bf s}_L}{\bf z}^{{\bf t}_L}w_L^{e_L})= K_0$, where $w_L=h_L$. Let $f$ (resp.\:$g$) be the numerator (resp.\:denominator) of ${\bf x}^{{\bf r}_L}{\bf y}^{{\bf s}_L}{\bf z}^{{\bf t}_L}w_L^{e_L}$. Since $k(f/g)=K_0=k({\bf y}^{\bf s}{\bf z}^{-\bf t})$, there exists $A\in GL_2(k)$ such that 
\[
(f,g)=({\bf y}^{\bf s},{\bf z}^{\bf t})A. 
\]
Then there exist $a_L,b_L\in k$ such that $f=a_L{\bf y}^{\bf s}+b_L{\bf z}^{\bf t}$. Without loss of generality, we may assume that $e_L>0$. Then $f$ is divisible by $w_L^{e_L}$. Since the components of ${\bf x},{\bf y},{\bf z}$ and $w_L$ are pairwise relatively prime, $f={\bf x}^{{\bf r}_L}w_L^{e_L}$ and $a_L\not=0$ and $b_L\not=0$. Therefore, ${\bf x}^{{\bf r}_L}w_L^{e_L}=a_L{\bf y}^{\bf s}+b_L{\bf z}^{\bf t}$.

Let $N=N_0-n$. By using the same argument, we get $w_1,\ldots,w_N\in B$ which are elements of $\vec{h}$, $(e_1,\ldots,e_N)\in\Z_+^N$ and pairwise linearly independent elements $(a_1,b_1),\hdots ,(a_N,b_N)\in (k^*)^2$ such that 
\[
{\bf x}^{{\bf r}_i}w_i^{e_i}+a_i{\bf y}^{\bf s}+b_i{\bf z}^{\bf t}=0 \quad (1\leq i\leq N).
\]
Let $M_1,M_2$ be $k$-linearly independent monomials in $h_1,\hdots ,h_N$ of degree $\delta$. Then $M_1/M_2\in K_0$, and by {\it Claim\,1}, there exist relatively prime monomials $\mu_i$ dividing $M_i$ ($i=1,2$) such that $K_0=k(\mu_1/\mu_2)$. As observed, we then have $\deg\mu_i=\delta$ for each $i$, and it follows that $\mu_i=M_i$. 

This completes the proof of parts (1) and (2).

It remains to show part (3). 
Given $i\ne j$, if ${\bf x}^{{\bf r}_i}$ and ${\bf x}^{{\bf r}_j}$ are not relatively prime, then 
$h_{\kappa}$ divides both ${\bf x}^{{\bf r}_i}$ and ${\bf x}^{{\bf r}_j}$ for some $\sigma (i_{s+1})\le\kappa\le\sigma (i_n)$.
Since $h_{\kappa}$ is a prime element of $B$, $h_{\kappa}B$ is a prime ideal containing both 
$a_i{\bf y}^s+b_i{\bf z}^t$ and $a_j{\bf y}^s+b_j{\bf z}^t$. Since $(a_i,b_i)$ and $(a_j,b_j)$ are linearly independent, we have 
${\bf y}^s,{\bf z}^t\in h_{\kappa}B$, a contradiction, since the prime factors of {\bf y} and {\bf z} are $h_{\sigma (i_1)},\hdots ,h_{\sigma (i_s)}$.
Therefore, $\gcd ({\bf x}^{{\bf r}_i},{\bf x}^{{\bf r}_j})=1$ for all $i\ne j$.

Let $J\subset\{ 1,\hdots ,l\}$ be the subset of those $j$ such that $r_{ij}\ne 0$ for some $i$ $(1\le i\le N)$, and let $m=|J|$. By reordering, we may assume that $J=\{ 1,\hdots , m\}$.
Let $\tilde{\bf x}=(x_1,\hdots ,x_m)$ and $S=k[\tilde{\bf x},{\bf y},{\bf z},{\bf w}]$.
Then $B\cong S^{[l-m]}$.
Let us now rewrite 
\[
\tilde{\bf x}^{{\bf r}_i}w_i^{e_i}+a_i{\bf y}^{\bf s}+b_i{\bf z}^{\bf t} = a_i\left( T_0^{\beta_0}+\lambda_iT_1^{\beta_1}+T_i^{\beta_i}\right) \,\, ,\,\, 1\le i\le N
 \]
 where $T_0^{\beta_0}={\bf y}^{\bf s}$, $T_1^{\beta_1}={\bf z}^{\bf t}$, $T_i^{\beta_i}=\tilde{\bf x}^{{\bf r}_i}(\alpha w_i)^{e_i}$, $\lambda_i=b_i/a_i$ and $\alpha\in k$ satisfies $\alpha^{e_i}=a_i$. 
Note that $\lambda_1,\hdots ,\lambda_N\in k^*$ are distinct. 
Let $d_i=\gcd (\beta_{i1},\hdots ,\beta_{in_i})$ and let $D$ be a common divisor of $d_i$ and $d_j$ for $i\ne j$. Then $K_0=k(\zeta^D)$ for $\zeta$ of the form 
${\bf h}_1^{\bf a}{\bf h}_2^{-{\bf b}}$, which implies $D=1$. 
It follows that $S=k[\Delta ]$ for trinomial data $\Delta$. Since $\{\tilde{\bf x},{\bf y},{\bf z},{\bf w}\}$ is a subset of a minimal system of generators $\{h_1,\ldots,h_{N_0}\}$ of $B$, $\Delta$ is reduced. 
This completes the proof of (3). 
\end{proof}


\section{Graded UFDs of Dimension Two or Three}\label{Dim-2} 
Throughout this section, it is assumed that the ground field $k$ is algebraically closed. 

\subsection{Dimension Two}

Let $\Delta$ be trinomial data using the integer $n\ge 2$ and partition:
\[
n=n_0+\cdots +n_r\quad  (r,n_i\ge 2)
\]
Then $\dim k[\Delta ]=n-r+1=2$ if and only if $n_i=1$ for each $i$. This is the {\bf unit partition} of $n$. 
Since $\beta_i\in\Z_+^{n_i}$ for each $i$, we see that $\beta_0,\hdots ,\beta_r$ are pairwise relatively prime positive integers. We thus have
\[
k[\Delta ]=k[T_0,\hdots ,T_r]/(T_0^{\beta_0}+\lambda_iT_1^{\beta_1}+T_i^{\beta_i})_{2\le i\le r}
\]
where $\lambda_2,\hdots ,\lambda_r\in k^*$ are distinct. 
The $\Z$-grading of $k[\Delta ]$ induced by $\Delta$ is defined by 
letting the image of $T_i$ be homogeneous of degree $\beta/\beta_i$ where $\beta=\prod_{0\le i\le r}\beta_i$. 

We say that $\Delta$ is in {\bf Mori form} if 
either (1) $r=1$ and $\beta_0=\beta_1=1$ or (2) $r\ge 2$,  $\lambda_2=1$ and $\beta_0>\beta_1>\cdots >\beta_r\ge 2$. 
Note that this is stronger than the assumption that $\Delta$ is reduced. 

\begin{lemma}\label{Mori-form} There exists trinomial data $\Delta^{\prime}$ in Mori form such that $k[\Delta ]\cong_kk[\Delta^{\prime}]$.
\end{lemma}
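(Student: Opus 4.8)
The plan is to bring $\Delta$ into Mori form in two stages: first normalize the exponents by permuting the roles played by the variables, then normalize the coefficients by a single rescaling.

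First I would invoke \emph{Proposition\,\ref{tri-data}(c)} to replace $\Delta$ by reduced trinomial data without changing the isomorphism class of $k[\Delta]$. In the present (dimension-two, unit-partition) setting each $\beta_i=\beta_{i1}$ is a single positive integer with $|\beta_i|=\beta_i$, so each reduction step simply deletes a variable whose exponent equals $1$ while preserving the unit partition and the equality $\dim k[\Delta]=n-r+1=2$. After reduction either $r=1$ with $\beta_0=\beta_1=1$, in which case $\Delta$ is already in Mori form, or $r\ge 2$ with every $\beta_i\ge 2$. In the latter case the $\beta_i$ are pairwise coprime integers $\ge 2$, hence pairwise distinct, and the $\lambda_i\in k^*$ are distinct.

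The heart of the argument is a relabeling step. Writing $m_i=T_i^{\beta_i}$ for the image of each pure-power monomial in $B=k[\Delta]$, the defining relations read $m_0+\lambda_i m_1+m_i=0$ for $2\le i\le r$, so that, modulo the relations, each $m_i$ with $i\ge 2$ is a $k$-linear combination of $m_0$ and $m_1$. I would show that any two of these monomials may be taken as the distinguished pair appearing in every relation: given target indices, I rewrite the generators of the defining ideal as $k$-linear combinations of the original ones (for instance, subtracting the relations for indices $i$ and $j$ gives $m_i-m_j=(\lambda_j-\lambda_i)m_1$), which is an elementary change of ideal basis and so presents the \emph{same} ideal with distinguished pair and leaves now being the desired monomials, the new coefficients arising from the $\lambda_i$ by a fractional-linear transformation. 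Such a rewrite introduces only constant units (signs $\pm 1$ in the basic swaps) on the leaf monomials $m_i$; since $k$ is algebraically closed I absorb these by rescalings $T_i\mapsto\gamma_i T_i$ with $\gamma_i^{\beta_i}$ equal to the offending unit, which are $k$-algebra isomorphisms carrying the presentation into genuine trinomial form. Because a fractional-linear map is injective and the original $r+1$ monomials correspond to distinct points, the transformed coefficients again lie in $k^*$ and remain distinct, so the output is valid trinomial data. Composing such moves realizes an arbitrary relabeling of $m_0,\ldots,m_r$; choosing the relabeling that assigns the largest exponent to position $0$, the next largest to position $1$, and the remaining exponents in strictly decreasing order to the leaves produces data $\Delta^{\prime\prime}$ with $\beta_0>\beta_1>\cdots>\beta_r\ge 2$ and $k[\Delta]\cong_k k[\Delta^{\prime\prime}]$ (strictness coming from distinctness of the exponents).

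Finally I would normalize the coefficient $\lambda_2$. Rescaling $T_1\mapsto\gamma T_1$ with $\gamma^{\beta_1}=\lambda_2^{-1}$ replaces each $\lambda_i$ by $\gamma^{\beta_1}\lambda_i=\lambda_i/\lambda_2$ while leaving the exponents and their ordering untouched; this sends $\lambda_2$ to $1$ and keeps the $\lambda_i$ distinct and nonzero, yielding data $\Delta'$ in Mori form with $k[\Delta]\cong_k k[\Delta']$. I expect the main obstacle to be the relabeling step: one must verify carefully that exchanging the distinguished pair with a leaf really does return valid trinomial data — that the rewritten coefficients avoid $0$ and $\infty$ and stay distinct, and that the unit corrections on the leaves can always be cleared — and it is precisely here that algebraic closedness of $k$ is used, to extract the required $\beta_i$-th roots.
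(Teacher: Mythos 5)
Your proposal is correct and follows essentially the same route as the paper: both arguments permute the three monomial slots by rewriting the trinomial relations as linear combinations of one another (checking via the induced fractional-linear change of coefficients that the new $\lambda_i$ stay in $k^*$ and remain distinct), absorb the resulting constant units by extracting $\beta_i$-th roots using algebraic closedness, and finish by rescaling $T_1$ to force $\lambda_2=1$. The only cosmetic difference is that you eliminate the exponent-one variables at the outset via \emph{Proposition\,\ref{tri-data}(c)}, whereas the paper sorts first and eliminates afterwards.
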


\begin{proof}
Assume that $r\ge 2$ and 
choose $M$ so that $\beta_M=\max_i\beta_i$. If $M\ge 2$, replace $T_0^{\beta_0}$ by
$-(\lambda_MT_1^{\beta_1}+T_M^{\beta_M})$ to obtain relations
\[
T_M^{\beta_M}+(\lambda_M-\lambda_i)T_1^{\beta_1}+(\xi_iT_i)^{\beta_i}=0 \quad (i\ne M\, ,\, \xi_i\in k\, ,\, \xi_i^{\beta_i}=-1) 
\]
and:
\[
T_M^{\beta_M}+\lambda_MT_1^{\beta_1}+T_0^{\beta_0}=0
\]
If $M=1$ we have relations:
\[
T_1^{\beta_1}+\lambda_i^{-1}T_0^{\beta_0}+(\xi_iT_i)^{\beta_i}=0 \quad (2\le i\le r\, ,\, \xi_i\in k\, ,\, \xi_i^{\beta_i}=\lambda_i^{-1})
\]
Therefore, we may assume, without loss of generality, that $\beta_0\ge \beta_i$ for $i\ge 1$. 

Choose $N$ so that $\beta_N=\max_{i\ge 1}\beta_i$. If $N\ge 2$, replace $T_1^{\beta_1}$ by $-\lambda_N^{-1}(T_0^{\beta_0}+T_N^{\beta_N})$ to obtain relations
\[
T_0^{\beta_0}+\mu_iT_N^{\beta_N}+(\xi_i T_i)^{\beta_i}=0 \quad (i\ne N\, ,\, \mu_i=\lambda_i(\lambda_i-\lambda_N)^{-1}\, ,\, \xi_i\in k\, ,\, \xi_i^{\beta_i}=\lambda_N(\lambda_N-\lambda_i)^{-1})
\]
and:
\[
T_0^{\beta_0}+T_N^{\beta_N}+(\xi_N T_1)^{\beta_1}=0 \quad (\xi\in k\, ,\, \xi^{\beta_1}=\lambda_N)
\]
Therefore, we may assume, without loss of generality, that $\beta_0\ge \beta_1\ge \beta_i$ for $i\ge 2$. 
The remaining monomials $T_i^{\beta_i}$, $2\le i\le r$, can be permuted to obtain $\beta_0\ge \beta_1\ge \beta_2\ge \cdots \ge \beta_r$. 
If $\beta_i=\beta_{i+1}$ for any $i$, then since $\gcd (\beta_i,\beta_{i+1})=1$ we have $\beta_i=\beta_{i+1}=1$. This implies
$\beta_r=1$ and, since $r\ge 2$, $T_r$ can be eliminated and $r$ can be reduced by one. Therefore, we may assume $\beta_0>\beta_1>\beta_2>\cdots >\beta_r\ge 2$ when $r\ge 2$. 
Finally, to get $\lambda_2=1$, replace $T_1$ by $\lambda_2^{1/\beta_1}T_1$ and replace $\lambda_i$ by $\lambda_i/\lambda_2$. 
\end{proof}

For the next result, define trinomial data $\Delta_0$ by the unit partition $3=1+1+1$; $\beta_0=5$, $\beta_1=3$, $\beta_2=2$; and $\lambda_2=1$. Then
$k[\Delta_0]=k[T_0,T_1,T_2]/(T_0^5+T_1^3+T_2^2)$. 

\begin{theorem}\label{stable} Assume that the characteristic of $k$ is zero. Let $\Delta$ be trinomial data over $k$ with unit partition. 
Let $B$ be a UFD over $k$ and $A\subseteq B$ a factorially closed subalgebra isomorphic to $k[\Delta ]$.
\begin{itemize}
\item [{\bf (a)}] If $A$ is not isomorphic to $k^{[2]}$ or $k[\Delta_0]$ then $A\subseteq ML(B)$. 
In particular, $k[\Delta ]$ is stably rigid.
\item [{\bf (b)}] $k[\Delta_0]$ is rigid.
\item [{\bf (c)}] If $B\cong_kk^{[m]}$ for some $m\ge 2$ then $A\cong_kk^{[2]}$. 
\end{itemize}
\end{theorem}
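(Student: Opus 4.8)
The plan is to prove all three parts by analyzing the generators of $A$ against the degree function $\deg_D$ of an arbitrary locally nilpotent derivation $D$ of $B$, with the exceptional role of $k[\Delta_0]$ emerging from a \emph{Platonic} inequality among the exponents. First I would normalize the data: by \emph{Lemma~\ref{Mori-form}} I may assume $\Delta$ is in Mori form, so either $r=1$ with $\beta_0=\beta_1=1$ (whence $A\cong_k k^{[2]}$, the first exceptional case) or $r\ge 2$, $\lambda_2=1$ and $\beta_0>\beta_1>\cdots>\beta_r\ge 2$ with the $\beta_i$ pairwise relatively prime. Write $A=k[u_0,\dots,u_r]$ for the images $u_i$ of $T_i$, subject to $u_0^{\beta_0}+\lambda_i u_1^{\beta_1}+u_i^{\beta_i}=0$ for $2\le i\le r$. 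By \emph{Corollary~\ref{tri-data2}} the $u_i$ form a complete homogeneous signature sequence for $(A,\deg_{\mathfrak g})$, and by \emph{Proposition~\ref{fact-closed}} each $u_i$ is prime in $A$, hence (being distinct primes) the $u_i$ are pairwise relatively prime. Since $A$ is factorially closed in the UFD $B$, a short factorization argument shows each $u_i$ remains prime in $B$ and that the $u_i$ stay pairwise relatively prime in $B$. Part (a) then reduces to the single assertion that every $D\in\mathrm{LND}(B)$ satisfies $D(u_i)=0$ for all $i$.

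For part (a) fix $D\in\mathrm{LND}(B)$, put $\partial=\deg_D$ and $a_i=\partial(u_i)\ge 0$, noting $a_i=0$ exactly when $u_i\in\ker D$. Applying $D$ to each relation and using that $\ker D$ is factorially closed gives, for each $i$, a syzygy whose three terms $u_0^{\beta_0},u_1^{\beta_1},u_i^{\beta_i}$ must have their maximal $\partial$-degree attained at least twice; combined with the pairwise coprimality of the primes $u_j$ this forces strong divisibility constraints on the $\partial(Du_j)$ and hence numerical relations among the $a_j$ and the $\beta_j$. The outcome I am aiming for is the dichotomy: \emph{either} all $a_i=0$, so $A\subseteq\ker D$, \emph{or} the exponents satisfy $\sum_{i=0}^r \beta_i^{-1}>r-1$. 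A direct check shows that for pairwise coprime $\beta_i\ge 2$ this inequality holds only when $r=2$ and $(\beta_0,\beta_1,\beta_2)=(5,3,2)$, i.e.\ $A\cong_k k[\Delta_0]$; in particular it fails for $r\ge 3$ and for every admissible triple other than $(5,3,2)$. Hence if $A\not\cong_k k^{[2]},k[\Delta_0]$ then $A\subseteq\ker D$ for every $D$, that is $A\subseteq ML(B)$. Taking $B=k[\Delta]^{[m]}$ with variables $x_1,\dots,x_m$, the coordinate derivations $\partial/\partial x_j$ satisfy $\bigcap_j\ker=k[\Delta]$, giving $ML(k[\Delta]^{[m]})\subseteq k[\Delta]$, and the reverse inclusion is what was just proved, so $k[\Delta]$ is stably rigid. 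I expect the main obstacle to be exactly the implication ``$A\not\subseteq\ker D\Rightarrow$ Platonic inequality.'' The syzygy bookkeeping closes cleanly when $B$ is itself a surface, but in a general UFD the elements $Du_j$ may involve primes outside $A$; to control this I would pass to the filtration of $A$ induced by $\partial$ and its associated graded, reducing the estimate to a homogeneous computation on the graded surface where the Makar--Limanov exponent estimates (cf.\ \cite{Freudenburg.17}) apply.

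Part (b), the rigidity of $k[\Delta_0]$, must be argued separately precisely because the inequality above does not exclude $(5,3,2)$. Here I would take any $D\in\mathrm{LND}(k[\Delta_0])$ and homogenize it with respect to the natural grading $\deg u_0=6$, $\deg u_1=10$, $\deg u_2=15$ (so that $u_0^5,u_1^3,u_2^2$ all have degree $30$), reducing to a nonzero homogeneous $D$; its kernel is a one-dimensional factorially closed graded subalgebra $k[f]$, and tracing the degree of a local slice against the weighted structure forces a contradiction. Alternatively, and more conceptually, one uses $k[\Delta_0]\cong_k k[x,y]^{\Gamma}$ for $\Gamma$ the binary icosahedral group: a nonzero LND would lift to a $\Gamma$-equivariant nonzero LND of $k[x,y]$, whose kernel is a $\Gamma$-stable line in the standard two-dimensional representation, contradicting its irreducibility.

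Finally, part (c) follows by feeding a polynomial ring into (a) and (b). Since $ML(k^{[m]})=k$, part (a) shows that $A\not\cong_k k^{[2]},k[\Delta_0]$ is impossible (it would force the two-dimensional $A$ into $k$), so $A\cong_k k^{[2]}$ or $A\cong_k k[\Delta_0]$. To rule out the latter I would note that, since $A\not\subseteq ML(k^{[m]})=k$, some coordinate derivation $\partial_j$ satisfies $A\not\subseteq\ker\partial_j$; the part (a) computation applied to $\partial_j$ not only forces the weights $(5,3,2)$ but also pins the induced degree function $\partial_j|_A$ to a positive multiple of the defining grading of $A$, so that the associated graded of $A$ is $A$ itself and $\partial_j$ descends to a nonzero homogeneous locally nilpotent derivation of $k[\Delta_0]$, contradicting part (b). Hence $A\cong_k k^{[2]}$. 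The delicate point in (c), as in (a), is the passage from $\partial_j$ on $B$ to an honest homogeneous locally nilpotent derivation on the associated graded of $A$, which I would isolate as a lemma on restriction of degree functions to factorially closed subalgebras.
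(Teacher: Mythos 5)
Your reduction to Mori form, the identification of the $u_i$ as pairwise coprime primes of $B$, and the ``Platonic'' trichotomy on $\beta_0^{-1}+\beta_1^{-1}+\beta_i^{-1}$ are exactly the skeleton of the paper's argument for part (a). But the load-bearing step --- that a relation $u_0^{\beta_0}+\lambda_i u_1^{\beta_1}+u_i^{\beta_i}=0$ among pairwise coprime primes of a UFD $B$ with $\beta_0^{-1}+\beta_1^{-1}+\beta_i^{-1}\le 1$ forces $u_0,u_1,u_i\in ML(B)$ --- is precisely the ABC Theorem (\cite{Freudenburg.17}, Theorem 2.48), which the paper simply invokes. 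You propose to re-derive it via ``syzygy bookkeeping'' on $\deg_D$ together with an associated-graded reduction, and you yourself flag this as ``the main obstacle''; as written it is a sketch of a known but genuinely nontrivial theorem, not a proof. If you cite the ABC Theorem instead, part (a) closes the way the paper does (note also the paper's last step: from $k[u_0,u_1,u_i]\subset ML(B)$ one gets $A\subseteq ML(B)$ because $A$ is algebraic over $k[u_0,u_1,u_i]$ and $ML(B)$ is algebraically closed in $B$). Part (b) is likewise a citation in the paper (\cite{Freudenburg.17}, Theorem 9.7); your first sketch is roughly that proof, while the equivariant-lifting alternative has its own unaddressed issues (lifting an LND across the ramified cover $\A^2\to\A^2/\Gamma$, and converting ``$\Gamma$-stable kernel $k[f]$'' into ``$\Gamma$-stable line'').

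The real gap is part (c). The paper does not deduce (c) from (a) and (b) at all: it observes that in $k^{[m]}$ the relation $u_0^{\beta_0}+\lambda_i u_1^{\beta_1}+u_i^{\beta_i}=0$, with all three exponents at least $2$ and $u_0,u_1,u_i$ irreducible and pairwise relatively prime, already contradicts Iitaka's lemma (\emph{Lemma~\ref{Iitaka}}), which forces the smallest exponent to equal $1$. That lemma is the essential extra input available in polynomial rings, and it is exactly what kills the $(5,3,2)$ case, where the ABC inequality $\tfrac15+\tfrac13+\tfrac12>1$ gives you nothing. Your substitute --- that some coordinate derivation $\partial_j$ ``descends to a nonzero homogeneous locally nilpotent derivation of $k[\Delta_0]$'' on the associated graded --- is not established: $\partial_j$ cannot preserve $A$ (if it did, rigidity of $k[\Delta_0]$ would force $\partial_j|_A=0$ at once), and the claim that the induced derivation on the associated graded preserves ${\rm gr}(A)\cong k[\Delta_0]$ and is nonzero there is precisely the delicate point you defer to an unproved lemma; this is the same machinery as the ABC Theorem, and in the $(5,3,2)$ case it is not known to yield a contradiction. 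Without Iitaka's lemma (or an equivalent input special to $k^{[m]}$), your argument for (c) does not close.
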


\begin{proof} Part (a). By {\it Lemma\,\ref{Mori-form}}, we may assume, with no loss of generality, that $\Delta$ is in Mori form. Since $k[\Delta ]\not\cong k^{[2]}$, we have $r\ge 2$ and we can write
\[
k[\Delta ]=k[T_0,\hdots ,T_r]/(T_0^{\beta_0}+\lambda_iT_1^{\beta_1}+T_i^{\beta_i})_{2\le i\le r}
\]
where $\beta_0>\cdots >\beta_r\ge 2$ are pairwise relatively prime integers and $\lambda_2,\hdots ,\lambda_r\in k^*$ are distinct. 
Let $h_i$ denote the image of $T_i$ in $A$. 
By {\it Corollary\,\ref{tri-data2}}, $\vec{h}:=\{ h_0,\hdots ,h_r\}$ is a complete homogeneous signature sequence consisting of pairwise nonassociate prime elements of $A$. 
Since $A$ is factorially closed in $B$, $\vec{h}$ consists of pairwise nonassociate primes of $B$.

Suppose that $\beta_0^{-1}+\beta_1^{-1}+\beta_i^{-1}\le 1$ for some $i$. By the ABC Theorem (\cite{Freudenburg.17}, Thm.\,2.48), it follows that $k[h_0,h_1,h_i]\subset ML(B)$. 
Since $A$ is algebraic over $k[h_0,h_1,h_i]$ and $ML(B)$ is algebraically closed in $B$, it follows that $A\subseteq ML(B)$. Taking $B=A^{[n]}$ for $n\ge 0$ shows that $A$ is stably rigid. 
Otherwise $\beta_0^{-1}+\beta_1^{-1}+\beta_i^{-1}> 1$ for each $i$. It is easy to check that $\{ \beta_0,\beta_1,\beta_i\}=\{ 5,3,2 \}$. Therefore, $r=2$ and $\Delta = \Delta_0$, a contradiciton. 
This proves part (a). 

Part (b). The rigidity of $k[\Delta_0]$ is given by \cite{Freudenburg.17}, Theorem 9.7. 

Part (c). 
For $2\le i\le r$ we have $h_0^{\beta_0} +\lambda_ih_1^{\beta_1}+h_i^{\beta_i}=0$, where $\beta_0>\beta_1>\beta_i\ge 2$. 
This is an equation in $k^{[m]}$ where each $h_j$ is prime in $A$, and therefore also prime in both $B$ and $k^{[m]}$. By {\it Lemma\,\ref{Iitaka}} below, 
$\beta_i=1$ for $2\le i\le r$. Therefore, $r=1$ and $A\cong k^{[2]}$. 
\end{proof}

The following result is due to Iitaka (which assumes that $k$ is algebraically closed of characteristic zero). 

\begin{lemma}\label{Iitaka} {\rm \cite{Iitaka.79}.} Given the integer $n\ge 1$, let $f,g,h\in k^{[n]}$ be irreducible pairwise relatively prime polynomials such that $f^a+g^b+h^c=0$ for integers $a,b,c$ with 
$a\ge b\ge c\ge 1$. Then $c=1$.
\end{lemma}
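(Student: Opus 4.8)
The plan is to first cut the problem down to finitely many exponent patterns by a Mason--Stothers estimate, and then to exploit irreducibility directly through unique factorization. Throughout I write $a\ge b\ge c\ge 2$ and argue by contradiction, aiming to show no such configuration exists.

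\emph{Reducing the exponents.} Since $f,g,h$ are irreducible they are nonconstant, and by hypothesis pairwise coprime. When $n=1$ I would apply the Mason--Stothers theorem (characteristic $0$) directly in $k^{[1]}$. When $n\ge 2$ I would first restrict to a generic line $L\cong\A^1_k$: because $V(f)\cap V(g)$, $V(g)\cap V(h)$ and $V(f)\cap V(h)$ each have codimension $\ge 2$, a generic $L$ misses all three, so the restrictions $\tilde f,\tilde g,\tilde h\in k^{[1]}$ remain pairwise coprime, and a generic $L$ also keeps each of them nonconstant. Applying Mason--Stothers to $\tilde f^a+\tilde g^b+\tilde h^c=0$ (with $\tilde f^a,\tilde g^b,\tilde h^c$ coprime and nonconstant) yields $a\tilde\alpha,\,b\tilde\beta,\,c\tilde\gamma\le\tilde\alpha+\tilde\beta+\tilde\gamma-1$, where $\tilde\alpha=\deg\tilde f$, etc.; dividing and summing forces $\tfrac1a+\tfrac1b+\tfrac1c>1$. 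The only triples with $a\ge b\ge c\ge2$ satisfying this are $(a,2,2)$ for $a\ge 2$, together with $(3,3,2)$, $(4,3,2)$ and $(5,3,2)$.

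\emph{Repeated exponents.} Suppose two of the exponents coincide and equal some $e\ge 2$, and write the relation as $F^{m}+u^{e}+v^{e}=0$, where $u,v$ carry the two equal exponents and $F$ (prime, exponent $m$) is the third factor. Over the algebraically closed $k$ I would factor $u^{e}+v^{e}=\prod_{j=1}^{e}(u+\zeta_j v)$ into $e$ pairwise coprime factors: any common divisor of $u+\zeta_i v$ and $u+\zeta_j v$ ($i\ne j$) divides $\gcd(u,v)=1$. Since $F$ is prime, $F^{m}=-\prod_j(u+\zeta_j v)$ is $F$-primary, so all but at most one of these coprime factors is a unit. If $e\ge 3$, at least two of the $u+\zeta_j v$ are constant, whence $u$ and $v$ are constant, a contradiction. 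If $e=2$, one factor, say $u-\eta v$ with $\eta^2=-1$, equals a constant $\lambda\in k^*$, and then $2u=\lambda\mp F^{m}/\lambda$ exhibits $u$ (up to an affine change) as $F^{m}\mp\lambda^2$; since $X^{m}\mp\lambda^2$ splits into $m\ge 2$ factors of positive degree, $u$ is reducible, contradicting irreducibility. This disposes of $(a,2,2)$ (the two exponents $2$) and $(3,3,2)$ (the two exponents $3$).

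\emph{The sporadic cases.} The remaining triples $(4,3,2)$ and $(5,3,2)$ --- the octahedral and icosahedral exponents --- have all parts distinct, so no two-term binomial factorization is available, and I expect these to be the main obstacle. Here the relation $f^a+g^b+h^c=0$ forces the configuration governed by the corresponding finite-group (Platonic) syzygy, in which the polynomial carrying the largest exponent is, after passing to the associated parametrization, a product of the linear forms dual to the vertices of the octahedron (respectively icosahedron), and is therefore reducible, contradicting irreducibility. Making this rigorous --- in particular, proving that every coprime solution of these two sporadic equations is pulled back from the Klein-form syzygy, so that one of $f,g,h$ must factor --- is precisely the content of Iitaka's analysis, which I would invoke to close out these final two cases \cite{Iitaka.79}.
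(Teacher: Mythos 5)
First, note that the paper itself offers no proof of this lemma: it is quoted verbatim from Iitaka's article, so there is no internal argument to measure you against. Judged as a self-contained proof, your proposal gets genuinely far but does not close. The Mason--Stothers reduction is sound (restriction to a generic line preserves nonconstancy and pairwise coprimality because each pairwise intersection of zero sets has codimension at least two), and it correctly cuts the triples with $a\ge b\ge c\ge 2$ down to $(a,2,2)$, $(3,3,2)$, $(4,3,2)$, $(5,3,2)$. Your factorization of $u^{e}+v^{e}$ into pairwise coprime binomials, combined with primality of the third factor, correctly disposes of the first two families. You could also have disposed of $(4,3,2)$ by the very same device, since $4$ is even: writing $h^2+f^4=(h+\iota f^2)(h-\iota f^2)=-g^3$ with $\iota^2=-1$, the two factors are coprime and $g$ is prime, so one factor is a constant $\lambda$; then $2h=\lambda-g^3/\lambda$ is, up to a unit, $\prod_{j=1}^{3}(g-\mu_j)$ with $\mu_j^3=\lambda^2$, contradicting irreducibility of $h$. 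So the only genuinely sporadic case is $(5,3,2)$.

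That case is the gap, and you concede it by ``invoking Iitaka's analysis,'' which is circular if the aim is to prove the lemma rather than cite it. The gap is not cosmetic, and it cannot be filled by more of the same tools. Unique factorization alone cannot forbid the relation: the paper's own ring $k[\Delta_0]=k[x,y,z]/(x^5+y^3+z^2)$ is a two-dimensional rational affine UFD in which $x^5+y^3+z^2=0$ holds among irreducible, pairwise relatively prime elements. Nor does the degree estimate help: $\tfrac15+\tfrac13+\tfrac12>1$, so $(5,3,2)$ sits exactly inside the range Mason--Stothers permits (in $k[\Delta_0]$ the homogeneous degrees $6,10,15$ saturate the bound). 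Ruling out $(5,3,2)$ in $k^{[n]}$ therefore requires input specific to polynomial rings --- precisely the icosahedral/Klein-syzygy analysis of the cited paper --- and that is the entire content of the hardest case. As written, your argument proves the lemma for every exponent pattern except $(5,3,2)$ and leaves that one to the reference being proved.
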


\begin{theorem}\label{max-torus} Assume that the characteristic of $k$ is 0.
Let $B$ be an affine UFD over $k$, $\dim B=2$.
If $B$ admits an effective unmixed $\Z$-grading with $B_0=k$, then either $B\cong k^{[2]}$ or ${\rm Aut}_k(B)$ contains a unique maximal torus $T$ and $\dim T=1$.
\end{theorem}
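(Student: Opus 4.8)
The plan is to leverage the structural classification already developed and reduce the statement about maximal tori to facts about the signature sequence and the rigidity result in \emph{Theorem\,\ref{stable}}. First I would apply \emph{Theorem\,\ref{main}} (equivalence of (1) and (3)) together with \emph{Lemma\,\ref{Mori-form}} to write $B\cong_k k[\Delta]$ for trinomial data $\Delta$ in Mori form using the unit partition, since $\dim B=2$ forces $n_i=1$ for all $i$. If $r=1$ we are in the case $B\cong k^{[2]}$ and there is nothing more to prove, so I may assume $r\ge 2$ and $\beta_0>\beta_1>\cdots>\beta_r\ge 2$. The task then becomes: show that the automorphism group of such a ring contains a unique maximal torus, of dimension exactly one.

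The key observation is that the given effective $\Z$-grading already furnishes a one-dimensional torus $T=\G_m\hookrightarrow\operatorname{Aut}_k(B)$, acting with $B^T=B_0=k$. To see it is maximal and unique, I would argue that any torus action on $B$ must preserve the set of homogeneous prime elements of smallest degree and must respect the intrinsic structure recorded by the complete homogeneous signature sequence $\vec h=\{h_0,\dots,h_r\}$ from \emph{Corollary\,\ref{tri-data2}}. The crucial input is \emph{Theorem\,\ref{stable}(a)}: unless $B\cong k^{[2]}$ or $B\cong k[\Delta_0]$, we have $B=ML(B)$, so $B$ is rigid and admits no nontrivial $\G_a$-action. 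I would then use that a maximal torus in $\operatorname{Aut}_k(B)$ determines, and is determined by, a grading of $B$ whose neutral component is a field (namely $k$), and that by the uniqueness in \emph{Theorem\,\ref{non-iso}} the presentation by trinomial data in Mori form is unique in characteristic zero. This pins down the weights $\beta/\beta_i$ up to the single scaling ambiguity, which is exactly the freedom of the rank-one torus already present; hence no larger torus can act.

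Concretely, the steps I would carry out are: (i) reduce to Mori form and dispose of the cases $B\cong k^{[2]}$ and $B\cong k[\Delta_0]$ separately, noting that for $k[\Delta_0]$ rigidity from \emph{Theorem\,\ref{stable}(b)} still gives a unique maximal torus of dimension one via the same argument; (ii) produce the explicit one-dimensional torus from the $\Delta$-grading; (iii) take an arbitrary torus $T'\subseteq\operatorname{Aut}_k(B)$ and let it act, observing that the induced multigrading must be a refinement coarser than or equal to the $\Z$-grading because the $h_i$ are the unique (up to scalar) irreducible building blocks of the defining trinomial relations and must be eigenvectors; (iv) conclude from the rank-one constraint on the degrees, established in the proof of \emph{Theorem\,\ref{classify}} (part 2(b)), that any such $T'$ has rank at most one, forcing $\dim T'\le 1$ and hence the $\Z$-grading torus is maximal; (v) deduce uniqueness from the uniqueness of the grading, again using that $B_0=k$ cannot be enlarged without introducing units beyond $k^*$, contradicting \emph{Proposition\,\ref{tri-data}(b)}.

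The hard part will be step (iii)—showing that every torus action is \emph{diagonal} with respect to the signature sequence, i.e.\ that an arbitrary $T'$ must make each $h_i$ a semi-invariant. The subtlety is that a priori a torus could act without fixing the homogeneous decomposition coming from $\Delta$; one must rule out ``twisted'' actions. I expect the resolution to come from factorial closedness and the fact (from \emph{Proposition\,\ref{fact-closed}}) that the $h_i$ are, up to associates, the only prime elements lying in one-dimensional factorially closed polynomial subalgebras $k[h_i]$, a property intrinsic to $B$ and hence preserved by any automorphism. Since a torus acts reductively and every torus-stable prime must be a scalar multiple of some $h_i$, the decomposition is forced, and the rank bound from the complexity-one condition (\emph{Theorem\,\ref{Knop}} and the rank-$(n-1)$ computation) completes the argument.
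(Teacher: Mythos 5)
Your reduction to Mori form and your appeal to \emph{Theorem\,\ref{stable}} for rigidity match the paper's opening moves, but the heart of the statement --- \emph{uniqueness} of the maximal torus --- is not actually proved in your proposal. The paper obtains it in one stroke from an external result (Arzhantsev--Gaifullin, Theorem 2.1): the automorphism group of a \emph{rigid} affine variety contains a unique maximal torus. Your substitute is step (iii), which you yourself flag as the hard part: that every torus in ${\rm Aut}_k(B)$ must act diagonally on the signature sequence $\{h_i\}$. The resolution you sketch does not hold up. The elements $h_i$ are not intrinsically characterized as ``the only primes lying in one-dimensional factorially closed polynomial subalgebras'' --- every $h_i-\lambda$, $\lambda\in k$, is a prime lying in the same factorially closed $k[h_i]\cong k^{[1]}$ (\emph{Corollary\,\ref{irred}}, \emph{Proposition\,\ref{fact-closed}}), and more seriously, even if an arbitrary torus $T'$ permuted the rays $k^*h_i$, that would only show $T'$ preserves a certain grading up to automorphism; it would not show $T'$ equals the given one-dimensional torus as a subgroup. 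Uniqueness of a maximal torus as a subgroup is a genuinely stronger statement than uniqueness of the grading up to isomorphism, and it is exactly what the rigidity theorem supplies. A second, structural problem: you invoke \emph{Theorem\,\ref{non-iso}}, but the paper proves \emph{Theorem\,\ref{non-iso}} \emph{using} \emph{Theorem\,\ref{max-torus}} (its proof begins ``By \emph{Theorem\,\ref{max-torus}}, the only effective $\Z$-gradings of $B$ are $\pm\mathfrak{g}$''), so that dependence is circular.

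The dimension bound also needs more than you give it. Your ``rank-one constraint'' from the proof of \emph{Theorem\,\ref{classify}} applies to the specific grading constructed there, not to a grading induced by an arbitrary torus $T'\subseteq{\rm Aut}_k(B)$. The paper's argument is concrete: $\dim T\le 2$ since $\dim B=2$, and if $\dim T=2$ then ${\rm Spec}(B)$ is a toric surface with $B^*=k^*$, hence of the form $k[V]^G$ for a finite cyclic $G$ acting on $V=k^2$; factoriality then forces $G$ to be generated by pseudo-reflections (Nakajima), whence $B\cong k^{[2]}$ (Shepard--Todd), contradicting the standing assumption --- alternatively, such a surface is not rigid (Liendo), again a contradiction. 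Neither of these inputs appears in your outline. To repair the proof you should replace steps (iii)--(v) with the citation of the rigid-variety maximal-torus theorem and supply the toric-surface contradiction for the case $\dim T=2$.
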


\begin{proof} 
By {\it Theorem\,\ref{main}} and {\it Lemma\,\ref{Mori-form}}, $B$ is isomorphic to $k[\Delta ]$ for some trinomial data $\Delta$ in Mori form. 
By {\it Corollary\,\ref{tri-data}}, $B^*=k^*$. 
Assume that $B$ is not isomorphic to $k^{[2]}$. 
{\it Theorem\,\ref{stable}} implies that $B$ is rigid. According to \cite{Arzhantsev.Gaifullin.17}, Theorem 2.1, the automorphism group of a rigid affine variety contains a unique maximal torus. 
Let $T\subset {\rm Aut}_k(B)$ be the unique maximal torus. 
{\it Proposition\,\ref{Laurent}}, combined with the hypothesis that $B$ has an effective $\Z$-grading, shows that $1\le\dim T\le 2$. 

Suppose that $\dim T=2$. Then ${\rm Spec}(B)$ is a toric surface. However, this gives a contradiction in two ways:
If $A$ is the coordinate ring of an affine toric surface over $k$ with $A^*=k^*$, then (1) $A$ is not rigid, and (2) $A$ is not a UFD unless $A=k^{[2]}$. 
The first statement can be shown using \cite{Liendo.10}, Proposition 4.2. We give a proof of the second statement. 

It is well known that, since $A^*=k^*$, $A$ is of the form $k[V]^G$ 
for some representation of a finite cyclic group $G$ on $V=k^2$. Let $H$ be the subgroup generated by pseudo-reflections, i.e., elements $g\in G$ whose fixed point set is a line. 
Suppose that $A$ is a UFD. The theorem of Nakajima \cite{Nakajima.82}, Theorem 2.11, implies that $G/H$ has no non-trivial one-dimensional representation. 
Since $G/H$ is cyclic, it follows that $G=H$, i.e., $G$ is generated by pseudo-reflections. 
But then the theorem of Shepard and Todd \cite{Shepard.Todd.54} implies $A\cong k^{[2]}$.

Therefore, $\dim T=1$.
\end{proof} 

The next result gives uniqueness of presentation of rings by trinomial data in Mori form. 

\begin{theorem}\label{non-iso} Assume that the characteristic of $k$ is zero and that $\Delta ,\Delta^{\prime}$ are trinomial data over $k$ in Mori form. 
If $k[\Delta ]\cong_kk[\Delta^{\prime}]$ then $\Delta =\Delta^{\prime}$. 
\end{theorem}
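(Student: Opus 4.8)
The plan is to show that every ingredient of the Mori form of $\Delta$ — the integer $r$, the exponents $\beta_0>\cdots>\beta_r$, and the normalized coefficients $\lambda_2=1,\lambda_3,\dots,\lambda_r$ — is an invariant of the graded isomorphism class of $B=k[\Delta]$, and then to read these invariants off identically from $\Delta$ and $\Delta'$. I would first dispose of the trivial case: by {\it Theorem\,\ref{generators}} the minimal number of $k$-algebra generators of $k[\Delta]$ is $n=r+1$ (Mori form is reduced), and this number is an isomorphism invariant, so $r=r'$. If $r=1$ then both rings are $k^{[2]}$ and $\Delta=\Delta'$ is immediate. Hence assume $r,r'\ge 2$, so that neither ring is $k^{[2]}$.

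The crucial reduction is to upgrade the abstract $k$-isomorphism $\phi\colon k[\Delta]\to k[\Delta']$ to a degree-preserving graded isomorphism. By {\it Theorem\,\ref{DFN}} and {\it Proposition\,\ref{tri-data}}, each of $k[\Delta],k[\Delta']$ is a two-dimensional affine UFD with an effective unmixed $\Z$-grading and degree-$0$ part $k$, so {\it Theorem\,\ref{max-torus}} applies: the automorphism group of each contains a unique maximal torus, of dimension one. The $\G_m$-action defined by the $\Delta$-grading is a one-dimensional subtorus of this maximal torus, hence equals it, so the $\Delta$-grading is intrinsic, and likewise for $\Delta'$. Since $\phi$ conjugates the unique maximal torus of the source onto that of the target, it respects the two gradings up to the automorphism $t\mapsto t^{\pm 1}$ of $\G_m$; positivity of both gradings (their degree monoids lie in $\Z_{\ge 0}$) rules out the sign flip, so $\phi$ preserves degree.

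Next I would recover the distinguished primes and the exponents. Writing $d_i=\deg_{\mathfrak{g}}h_i=\beta/\beta_i$ with $\beta=\prod_j\beta_j$, the hypothesis $\beta_0>\cdots>\beta_r\ge 2$ gives $d_0<\cdots<d_r$. Using pairwise coprimality of the $\beta_j$ I would verify that $d_i$ is not a nonnegative integer combination of $d_0,\dots,d_{i-1}$, so the only monomial in $h_0,\dots,h_r$ of degree $d_i$ is $h_i$ itself; hence each graded piece $B_{d_i}$ is the line $k h_i$. Thus the $d_i$ are exactly the degrees of minimal generators and each $h_i$ is canonical up to a scalar. Because $\phi$ preserves degree, $\{d_i\}=\{d_i'\}$ as increasing sequences and $\phi(h_i)=c_i h_i'$ for some $c_i\in k^*$. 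Since $\prod_{i=0}^{r}d_i=\beta^{r}$, the number $\beta$, and hence every $\beta_i=\beta/d_i$, is determined by the shared degree sequence; therefore $\beta_i=\beta_i'$ for all $i$.

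Finally I would recover the $\lambda_i$ by pushing the trinomial relations through $\phi$. Applying $\phi$ to $h_0^{\beta_0}+\lambda_i h_1^{\beta_1}+h_i^{\beta_i}=0$ and substituting the relation $h_i'^{\beta_i}=-h_0'^{\beta_0}-\lambda_i' h_1'^{\beta_1}$ of $\Delta'$ produces a $k$-linear dependence between $h_0'^{\beta_0}$ and $h_1'^{\beta_1}$; since $h_0',h_1'$ are algebraically independent by {\it Proposition\,\ref{fact-closed}(c)}, both coefficients must vanish, giving $c_i^{\beta_i}=c_0^{\beta_0}$ and $\lambda_i'=(c_1^{\beta_1}/c_0^{\beta_0})\lambda_i$. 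The Mori normalization $\lambda_2=\lambda_2'=1$ forces the common factor $c_1^{\beta_1}/c_0^{\beta_0}$ to equal $1$, whence $\lambda_i=\lambda_i'$ for every $i$, and so $\Delta=\Delta'$. The step I expect to be the main obstacle is the second one, namely guaranteeing that $\phi$ may be taken graded and degree-preserving; everything afterwards is a direct computation, and this step rests essentially on the uniqueness of the maximal torus in {\it Theorem\,\ref{max-torus}} together with the positivity of the grading.
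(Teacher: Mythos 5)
Your proposal is correct and follows essentially the same route as the paper's proof: use the uniqueness of the maximal torus (Theorem~\ref{max-torus}) and positivity of the weight monoid to force the isomorphism to be graded, deduce $B_{d_i}=k\cdot h_i$ from the coprimality of the $\beta_j$, match generators up to scalars, and extract the $\lambda_i$ from the trinomial relations via algebraic independence. The only (cosmetic) difference is that you recover $\beta$ from $\prod_i d_i=\beta^{r}$, whereas the paper derives $\beta_b=\gamma_b$ from the pairwise relations $\gamma_b\beta_c=\gamma_c\beta_b$.
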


\begin{proof} Let
\[
B=k[\Delta ]=k[T_0,\hdots ,T_r]/(T_0^{\beta_0}+\lambda_iT_1^{\beta_1}+T_i^{\beta_i})_{2\le i\le r}
\]
where $\beta_0> \cdots >\beta_r\ge 2$ are pairwise relatively prime positive integers and $\lambda_2,\hdots ,\lambda_r\in k^*$ are distinct. 
Since $r=1$ if and only if $k[\Delta ]\cong k^{[2]}$, it suffices to assume that $r\ge 2$.
Let $\mathfrak{g}$ be the $\Z$-grading of $B$ induced by $\Delta$. By {\it Theorem\,\ref{max-torus}}, the only effective $\Z$-gradings of $B$ are $\pm\mathfrak{g}$. 

Let $h_i$ be the image of $T_i$ in $B$, $0\le i\le r$.
By {\it Corollary\,\ref{tri-data2}}, $\vec{h}:=\{1,h_0,\hdots ,h_r\}$ is a complete homogeneous signature sequence for $(B,\deg_{\mathfrak{g}} )$. 
Let $d_i$ be the degree of $h_i$. Then $d_i=\beta/\beta_i$ for $\beta=\beta_0\cdots \beta_r$. Suppose that, for some $i\ge 1$, 
$d_i=\sum_{j=0}^{i-1}a_jd_j$ for $a_j\in\Z$.
Then
\[
\frac{\beta}{\beta_i}=\sum_{j=0}^{i-1}a_j\frac{\beta}{\beta_j}=\frac{m\beta}{\beta_0\cdots \beta_{i-1}} \implies \beta_0\cdots \beta_{i-1}=m\beta_i
\]
for some positive integer $m$. But this contradicts the hypothesis that $\gcd(\beta_j,\beta_i)=1$ for $j<i$. Therefore, 
for each $i\ge 1$, $d_i$ is not in the subgroup of $\Z$ generated by $d_0,\hdots ,d_{i-1}$. It follows that $B_{d_i}\cap k[h_0,\hdots ,h_{i-1}]=\{ 0\}$ for $i\ge 1$, and that $\{ d_i\}_{0\leq i\leq r}$ is a strictly increasing sequence. Consequently:
\[
B_{d_i}=k\cdot h_i \quad {\rm for}\quad 0\le i\le r
\]

Now suppose that $\Delta^{\prime}$ is defined by pairwise relatively prime integers $\gamma_0> \cdots >\gamma_t\ge 2$ and distinct $\mu_2,\hdots ,\mu_t\in k^*$, where $t\ge 2$. 
Let $\varphi :k^{[t]}=k[S_0,\hdots ,S_t]\to B$ be a $k$-algebra surjection such that
\[
\krn\varphi = (S_0^{\gamma_0}+\mu_jS_1^{\gamma_1}+S_j^{\gamma_j})_{2\le j\le t}
\]
By {\it Theorem\,\ref{generators}} we see that $t=r$, and by our previous observation, the $\Z$-grading of $B$ induced by $\Delta^{\prime}$ must equal $\mathfrak{g}$. 
Let $h_i^{\prime}$ be the image of $S_i$ in $B$. As above, $\vec{h}^{\prime}:=\{1,h_i^{\prime}\}_{0\le i\le r}$ is a complete homogeneous signature sequence for $(B,\mathfrak{g})$. 
We claim that $h_i^{\prime}\in k\cdot h_i$ for each $i$. This is clear for $i=0$, so assume it holds for $0\le j\le i$. Since $h_{i+1}\not\in k[h_0,\hdots ,h_i]=k[h_0^{\prime},\hdots ,h_i^{\prime}]$, we see that 
$\deg h_{i+1}^{\prime}\le d_{i+1}$ by minimality of $\deg h_{i+1}^{\prime}$. Conversely, by minimality of $d_{i+1}$, we see that $d_{i+1}\le \deg h_{i+1}^{\prime}$. Therefore $\deg h_{i+1}^{\prime} = d_{i+1}$ and $h_{i+1}^{\prime}\in B_{d_{i+1}}=k\cdot h_{i+1}$. So the claim follows by induction. 

We now have $d_i=\beta/\beta_i=\gamma/\gamma_i$ for each $i$. So for any subset $\{ a,b,c\}\subset\{ 0,\hdots ,r\}$ of three distinct elements we have: 
\[
\beta_a=\frac{\gamma/\gamma_b}{\beta/(\beta_a\beta_b)}=\frac{\gamma/\gamma_c}{\beta/(\beta_a\beta_c)} \implies \gamma_b\beta_c=\gamma_c\beta_b
\]
Since $\gcd(\beta_b,\beta_c)=\gcd(\gamma_b,\gamma_c)=1$ we see that $\beta_b=\gamma_b$ and $\beta_c=\gamma_c$. 

It remains to show $\lambda_i=\mu_i$ for each $i\ge 2$. Substituting $h_i^{\prime}=c_ih_i$, $c_i\in k^*$, into the equations
\[
h_0^{\beta_0}+\lambda_ih_1^{\beta_1}+h_i^{\beta_i}=0 \quad {\rm and}\quad (h_0^{\prime})^{\beta_0}+\mu_i(h_1^{\prime})^{\beta_1}+(h_i^{\prime})^{\beta_i}=0
\]
shows 
$(\lambda_ic_0^{\beta_0}-\mu_ic_1^{\beta_1})h_1^{\beta_1}+(c_0^{\beta_0}-c_i^{\beta_i})h_i^{\beta_i}=0$.
Since $h_1$ and $h_i$ are algebraically
independent by {\it Proposition\,\ref{fact-closed}}, we see that $\mu_ic_1^{\beta_1}=\lambda_ic_0^{\beta_0}$ for each $i\ge 2$. In particular, $\lambda_2=\mu_2=1$ implies 
$c_1^{\beta_1}=c_0^{\beta_0}$. Therefore, $\mu_i=\lambda_i$ for each $i\ge 2$.

This completes the proof of the theorem.
\end{proof}

\subsection{Dimension Three}

In \cite{Ishida.77}, Ishida classified affine UFDs of dimension three over $k$ which admit an effective unmixed $\Z^2$-grading with $B_0=k$. 
By {\it Theorem\,\ref{classify}}, these rings are defined by trinomial data over $k$ using a partition of the form $n=1+\cdots +1+2$ for some integer $n\ge 2$. 
From this, we obtain the following description.

\begin{theorem}\label{dim-3} Let $B$ be a $k$-domain. The following conditions for $B$ are equivalent.
\begin{itemize}
\item [{\bf (a)}] $B$ is an affine UFD of dimension three over $k$ which admits an effective unmixed $\Z^2$-grading with $B_0=k$. 
\item [{\bf (b)}] There exists trinomial data $\Delta$ over $k$ in Mori form defined by either (1) $r=1$ and $\beta_0=\beta_1=1$ or (2) $r\ge 2$,  $\lambda_2=1$ and integers $\beta_0>\beta_1>\cdots >\beta_r\ge 2$, 
such that
\[
B\cong_kk[\Delta ][u,v]/(T_0^{\beta_0}+\lambda T_1^{\beta_1}+u^cv^d)
\]
where $\lambda\in k^*\setminus\{ \lambda_2,\hdots ,\lambda_r\}$, 
$(c,d)\in\N^2\setminus\{ (0,0\}$, $c\ge d$ and $\gcd (c,d,\beta_0\cdots\beta_r)=1$. 
\end{itemize}
\end{theorem}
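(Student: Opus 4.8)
The plan is to prove the two implications separately, treating (b)$\Rightarrow$(a) as a direct application of the trinomial machinery and (a)$\Rightarrow$(b) as the substantive direction. For (b)$\Rightarrow$(a), I would recognize the displayed ring as $k[\Delta'']$, where $\Delta''$ is the trinomial data obtained from $\Delta$ by appending one further block, namely the size-two block $\{u,v\}$ with exponent vector $(c,d)$, together with the single extra relation $T_0^{\beta_0}+\lambda T_1^{\beta_1}+u^cv^d$. The hypotheses $c\ge d$, $\gcd(c,d,\beta_0\cdots\beta_r)=1$ and $\lambda\in k^*\setminus\{\lambda_2,\dots,\lambda_r\}$ are precisely axioms ($\Delta$.2)--($\Delta$.3) for $\Delta''$, so {\it Theorem\,\ref{DFN}} gives that $B$ is an affine UFD of dimension $n''-r''+1=3$, and {\it Proposition\,\ref{tri-data}} supplies the effective unmixed $\Z^2$-grading with $B_0=k$. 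The degenerate case $d=0$ is read as a free variable $v$, so that $B$ becomes a polynomial extension of a two-dimensional trinomial ring.

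For (a)$\Rightarrow$(b), {\it Corollary\,\ref{affineUFD}} shows that $B$ is rational, hence unirational, so {\it Theorem\,\ref{classify}} applies and yields $B\cong_kk[\Delta']^{[m]}$ for reduced trinomial data $\Delta'$ on a partition of some integer, with $m\in\N$. Since $\dim B=3$ and $\dim k[\Delta']\ge 2$ by {\it Theorem\,\ref{DFN}}, a dimension count forces $m\le 1$. When $m=0$, $\dim k[\Delta']=3$ forces the partition to have exactly one block of size two and all others of size one, namely $1+\cdots+1+2$; when $m=1$, $\dim k[\Delta']=2$ gives a unit partition and the extra polynomial variable plays the role of $v$, i.e.\ the case $d=0$.

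It then remains to normalize the presentation. Using {\it Corollary\,\ref{tri-data2}} I would realize the images of the variables as a complete homogeneous signature sequence, single out the size-two block as $\{u,v\}$, and apply the exponent-sorting and variable-rescaling operations from the proof of {\it Lemma\,\ref{Mori-form}} to the singleton blocks, arranging $\beta_0>\beta_1>\cdots>\beta_r\ge 2$ with $\lambda_2=1$ so that the singleton sub-data becomes a two-dimensional $k[\Delta]$ in Mori form. Rescaling $u,v$ as in the proof of {\it Proposition\,\ref{tri-data}(c)} puts the distinguished monomial in the shape $u^cv^d$ with $c\ge d$ and records its coefficient as the scalar $\lambda$; distinctness of $\lambda$ and the coprimality $\gcd(c,d,\beta_0\cdots\beta_r)=1$ follow from ($\Delta$.2)--($\Delta$.3) for the full data.

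The step I expect to be the main obstacle is this normalization. The reductions of {\it Lemma\,\ref{Mori-form}} were written for unit partitions, and here they must be carried out on the singleton blocks while keeping the size-two block intact as the single monomial $u^cv^d$ and the two pivots $T_0,T_1$ equal to single variables. When more than one relation is present the pivots are forced to be singletons, since a pivot monomial occurs in every relation whereas a component of $\{u,v\}$ occurs in only one; with a single relation one simply takes the two singleton blocks as the pivots. The delicate bookkeeping is then to sort the singleton exponents and clear coefficients---choosing $\lambda_2=1$, extracting the distinguished $\lambda$, and achieving $c\ge d$---without disturbing this separation of the size-two block from the pivots, for which I would lean on the solving-for-pivots device from the proof of {\it Theorem\,\ref{classify}}, expressing $\mathbf{y}^{\mathbf{s}}$ and $\mathbf{z}^{\mathbf{t}}$ through the remaining relations to re-expose two single-variable pivots whenever the initial presentation places the size-two block among them.
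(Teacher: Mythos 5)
Your proposal is correct and follows essentially the same route as the paper: the paper likewise treats (b)$\Rightarrow$(a) as immediate from Theorem~\ref{DFN} and Proposition~\ref{tri-data}, and proves (a)$\Rightarrow$(b) by applying Theorem~\ref{classify}, counting dimensions to force $m\le 1$ and a partition $1+\cdots+1+2$ when $m=0$, and then splitting into cases according to whether the size-two block sits at index $i\ge 2$, $i=0$, or $i=1$, resolving the latter two by exactly the ``solve for the pivot monomial'' substitution you describe. The normalization issue you flag as the main obstacle is precisely what the paper's Cases 2 and 3 handle, so no gap remains.
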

Note that $A\cong k[\Delta ]^{[1]}$ if $(c,d)=(1,0)$.
The following is a proof that condition (a) implies condition (b). 
\begin{proof} Assume that the condition (a) holds. By {\it Theorem\,\ref{classify}}, there exist $m\geq0$ and trinomial data $\widetilde{\Delta}$ over $k$ such that
\[
B\cong_kk[\widetilde{\Delta}]^{[m]}=\left(k[T_0,\hdots ,T_{r+1}]/(T_0^{\beta_0}+\lambda_iT_1^{\beta_1}+T_i^{\beta_i})_{2\le i\le r+1}\right)^{[m]}
\]
If $m=1$, then ${\rm tr.deg}_kk[\widetilde{\Delta}]=2$, hence $\widetilde{\Delta}$ can be assumed to be in Mori form.

Assume that $m=0$. 
Since ${\rm tr.deg}_kB=3=n-r+2$, there exists $i\in\{0,\ldots,r+1\}$ such that $n_i=2$ and $n_j=1$ for any $j\not=i$. 

\medskip
{\it Case 1: $n_i=2$ for some $i\geq 3$}. By symmetry, we may assume that $n_{r+1}=2$. Therefore $T_{r+1}^{\beta_{r+1}}=u^cv^d$ for some $(c,d)\in\N^2\setminus\{(0,0)\}$ with $c\geq d$ and $\gcd(c,d,\beta_0\cdots\beta_r)=1$. 
Let $\Delta$ be the trinomial data defined by $(\beta_0,\ldots,\beta_{r})$ and $(\lambda_2,\ldots,\lambda_r)$. Since $\Delta$ is defined by the unit partition, we may assume that it is in Mori form. Therefore
\[
B\cong_kk[\Delta ][u,v]/(T_0^{\beta_0}+\lambda_{r+1}T_1^{\beta_1}+u^cv^d)
\]

\medskip
{\it Case 2: $n_0=2$}. Then $T_0^{\beta_0}=u^cv^d$ for some $(c,d)\in\N^2\setminus\{(0,0)\}$. By replacing $T_0^{\beta_0}$ by 
\[
-(\lambda_{r+1}T_1^{\beta_1}+T_{r+1}^{\beta_{r+1}})
\]
we have
\[
T_{r+1}^{\beta_{r+1}}+\mu_iT_1^{\beta_1}+(\xi_iT_i)^{\beta_i}=0 \:\: (2\leq i\leq {r}), \quad 
T_{r+1}^{\beta_{r+1}}+\lambda_{r+1}T_1^{\beta_1}+u^cv^d=0
\]
where $\mu_i=\lambda_{r+1}-\lambda_i$ and $\xi_i\in k$ with $\xi_i^{\beta_i}=-1$. 
Since $(\beta_{r+1},\beta_1,\ldots,\beta_r,\beta_0)$ and $(\mu_2,\ldots,\mu_r,\lambda_{r+1})$ give the trinomial data with the partition $n=1+\cdots+1+2$, the assertion follows from Case 1. 

\medskip
{\it Case 3: $n_1=2$}. Then $T_1^{\beta_1}=u^cv^d$ for some $(c,d)\in\N^2\setminus\{(0,0)\}$ and $u^cv^d+\lambda_i^{-1}T_0^{\beta_0}+(\xi_iT_i)^{\beta_i}=0$ for $2\leq i\leq r+1$, where $\xi\in k$ with $\xi^{\beta_i}=\lambda_i^{-1}$. This is in Case 2. 
\end{proof}

The ring $B$ in this theorem can also be described as follows. Let $A=k[T_0,T_1]\subset B$, noting that $A\cong k^{[2]}$. Define trinomial data $\Theta$ over $k$ by 
by the partition $3=1+2$, integer vectors $\{ (\beta_0),(\beta_1),(c,d)\}$ and $\lambda\in k^*$. Then:
\[
 k[\Theta ]=A[u,v]/(T_0^{\beta_0}+\lambda T_1^{\beta_1}+u^cv^d) \quad {\rm and}\quad B\cong k[\Delta ]\otimes_Ak[\Theta ] 
 \]


\section{Application: Homogeneous LNDs of Polynomial Rings}\label{apps}

In this section, $k$ is an algebraically closed field of characteristic zero. 

\begin{theorem}\label{codim-2} Let $B$ be a unirational UFD of transcendence degree $n\ge 3$ over $k$ with $B^*=k^*$. Suppose that $B$ has an effective $G\cong\Z^{n-2}$-grading.
Let $D\in {\rm LND}(B)$ be nonzero and homogeneous, and let $A=\krn D$, which is a graded subring. Let $\Gamma\subset G$ be the subgroup generated by $\deg (A)$. 
\begin{itemize}
\item [{\bf (a)}] If ${\rm rank}(\Gamma )=n-2$ and $A_0=k$ then $A\cong k[\Delta ]^{[m]}$ for some trinomial data $\Delta$ over $k$ and $m\ge 0$, and $B$ is rational. 
\item [{\bf (b)}] If ${\rm rank}(\Gamma )\le n-3$ then $B\cong A^{[1]}$.
\end{itemize}
\end{theorem}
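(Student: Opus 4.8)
The plan is to treat the two parts separately: part (a) is a direct application of \emph{Theorem \ref{main}} to the kernel $A=\krn D$, while part (b) turns on producing a coarsening of the grading that exhibits a slice for $D$.

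For part (a), I would first record the standard facts about the kernel of a nonzero locally nilpotent derivation in characteristic zero (see \cite{Freudenburg.17}): $A=\krn D$ is factorially closed in $B$, hence a UFD since $B$ is; it is a graded subring because $D$ is homogeneous; and $\mathrm{tr.deg}_kA=n-1$. Factorial closedness together with $B^*=k^*$ gives $A^*=k^*$, and $A$ is unirational because $\mathrm{frac}(A)\subset\mathrm{frac}(B)$. By definition the weight monoid of $A$ generates $\Gamma$, so the grading is effective as a $\Gamma$-grading, and $\mathrm{rank}(\Gamma)=n-2$ forces $\Gamma\cong\Z^{n-2}=\Z^{(n-1)-1}$. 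With $A_0=k$ this places $A$ exactly in condition (4) of \emph{Theorem \ref{main}} (applied with $d=n-1\ge2$), giving $A\cong_kk[\Delta]^{[m]}$. Finally, a homogeneous local slice $r$ with $Dr=a\in A\setminus\{0\}$ yields $B_a=A_a[r]$, so $\mathrm{frac}(B)\cong\mathrm{frac}(A)^{(1)}$; since $k[\Delta]^{[m]}$ is rational by \emph{Theorem \ref{DFN}}, $B$ is rational.

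For part (b), the conceptual heart is that the rank deficiency of $\Gamma$ forces a homomorphism $\ell\colon G\to\Z$ with $\ell(\Gamma)=0$ but $\ell(\delta_D)\ne0$, where $\delta_D$ is the degree of $D$. I would again take a homogeneous local slice $r$ with $Dr=a\in A\setminus\{0\}$, so $\deg r=\deg a-\delta_D$ with $\deg a\in\Gamma$. From $B\subset B_a=A_a[r]$ and effectiveness of the grading one gets $G=\langle\deg B\rangle\subset\langle\Gamma,\deg r\rangle=\langle\Gamma,\delta_D\rangle$, so $\langle\Gamma,\delta_D\rangle$ has rank $n-2$; since $\mathrm{rank}(\Gamma)\le n-3$, this forces $\delta_D\notin\Gamma_{\Q}$ (and in fact $\mathrm{rank}(\Gamma)=n-3$). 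Hence a suitable $\ell$ exists, and after replacing $\ell$ by $-\ell$ if needed the coarsened $\Z$-grading $B=\bigoplus_iB^{(i)}$ satisfies $A\subseteq B^{(0)}$ and $r\in B^{(e)}$ with $e=-\ell(\delta_D)>0$, while $\deg_{\Z}D=-e<0$.

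It then remains to upgrade this to $B\cong A^{[1]}$. Applying powers of $D$ to a nonzero homogeneous element and using $\krn D=A\subseteq B^{(0)}$ shows every nonzero homogeneous element has degree a nonnegative multiple of $e$; in particular $B^{(i)}=0$ for $i<0$ and $B^{(0)}=A$. After rescaling so that $\deg_{\Z}r=1$ and $\deg_{\Z}D=-1$, I would factor the homogeneous element $r$ in the UFD $B$ into homogeneous primes, obtaining $r=pc$ with $p$ prime of degree $1$ and $c\in A$; from $a=Dr=c\,Dp$ one gets $c\mid a$ in $A$, so $B_a=A_a[p]$. The technical crux is passing from this localized description to $B=A[p]$: I would prove $B^{(m)}=Ap^m$ for every $m$ by taking $xp^m\in B$ with $x\in\mathrm{frac}(A)$, writing $x=u/v$ in lowest terms, and using that $A$ is factorially closed (so $u,v$ remain coprime in $B$) together with primality of $p$ and degree considerations to force $v\in k^*$, hence $x\in A$. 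This gives $B=\bigoplus_{m\ge0}Ap^m=A[p]\cong A^{[1]}$. I expect the extraction of the homogeneous prime $p$ and the coprimality bookkeeping to be the most delicate points, with the reduction via $\ell$ and the identity $\langle\Gamma,\delta_D\rangle=G$ being the decisive conceptual step.
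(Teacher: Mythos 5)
Your proposal is correct. Part (a) follows the paper essentially verbatim: both reduce to condition (4) of \emph{Theorem \ref{main}} applied to $A$ with its effective $\Gamma\cong\Z^{(n-1)-1}$-grading, and both get rationality of $B$ from $\mathrm{frac}(B)=\mathrm{frac}(A)^{(1)}$ via a local slice. Part (b) is where you genuinely diverge. The paper's proof is short because it outsources the main work to Proposition 4.1 of Daigle--Freudenburg--Moser-Jauslin, which directly supplies a homogeneous $p$ with $Dp\in A\setminus\{0\}$ and $B=B_\Gamma[p]$; the rank hypothesis is then used only to show $p$ is transcendental over $B_\Gamma$ and to identify $A=B_\Gamma$ via algebraic closedness. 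You instead build the slice by hand: the identity $G=\langle\Gamma,\delta_D\rangle$ (which the paper also extracts, but only to rule out algebraicity of $p$) is used at the outset to produce the rank-one quotient grading $\ell$ in which $D$ has negative degree, $B$ is non-negatively graded with $B^{(0)}=A$, and a homogeneous prime factor $p$ of a local slice generates each $B^{(m)}$ as $Ap^m$ by a coprimality argument in the UFD $B$. I checked the delicate points you flagged --- extraction of the degree-one prime $p$ from $r=pc$, the passage from $B_a=A_a[p]$ to $B^{(m)}=Ap^m$ using factorial closedness of $A$ and $B^*=k^*$ --- and they all go through. In effect you reprove the special case of the cited proposition that is needed here. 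What the paper's route buys is brevity and generality (the cited result covers gradings by arbitrary abelian groups without the UFD hypothesis); what yours buys is a self-contained argument that makes visible exactly where $B^*=k^*$, unique factorization, and the rank deficiency of $\Gamma$ enter.
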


\begin{proof} Part (a). 
Since $A$ is the kernel of a nonzero locally nilpotent derivation of $B$, $A$ is a UFD of transcendence degree $n-1$ over $k$. In addition, since $B$ is unirational, $A$ is also unirational. Since ${\rm rank}(\Gamma)=n-2$, we see that
$\Gamma\cong\Z^{n-2}$. Therefore, $A$ admits an effective $\Z^{n-2}$-grading with $A_0=k$ and $A^*=k^*$. 
{\it Theorem\,\ref{main}} implies that $A\cong k[\Delta ]^{[m]}$ for some trinomial data $\Delta$ and $m\ge 0$. So $A$ is rational and, since ${\rm frac}(B)={\rm frac}(A)^{(1)}$, we see that $B$ is also rational. 

Part (b). 
Let the grading of $B$ be given by $\bigoplus_{g\in G}B_g$. Define the subring $B_{\Gamma}=\bigoplus_{g\in \Gamma}B_g$. By hypothesis, ${\rm rank}(\Gamma)\le n-3$. 
By \cite{Daigle.Freudenburg.Moser-Jauslin.17}, Proposition\,4.1, there exists homogeneous $p\in B$ such that $Dp\in A\setminus\{ 0\}$ and 
$B=B_{\Gamma}[p]$. If $p$ is algebraic over $B_{\Gamma}$, then there exists a homogeneous dependence relation
\[
\sum_{i=0}^Nb_ip^i=0 \quad (b_i\in B_{\Gamma}, N\ge 1, b_0, b_N\ne 0)
\]
i.e., $p$ and each $b_i$ are homogeneous. It follows (by homogeneity) that:
\[
\deg_{\mathfrak{g}}(b_Np^N)=\deg_{\mathfrak{g}}(b_0) \implies \deg_{\mathfrak{g}}(p^N)\in \Gamma
\]
But then
\[
G=\langle \Gamma,\deg_{\mathfrak{g}}p\rangle\cong\Z^{n-2} \quad {\rm and}\quad N\deg_{\mathfrak{g}}p\in \Gamma \implies \Gamma\cong\Z^{n-2}
\]
a contradiction. Therefore, $p$ is transcendental over $B_{\Gamma}$, $B=B_{\Gamma}[p]\cong B_{\Gamma}^{[1]}$ 
and $G=\Gamma\oplus\langle\deg_{\mathfrak{g}}p\rangle$, so $\Gamma\cong\Z^{n-3}$ and $\dim B_{\Gamma}=n-1$. 
Since $A$ is algebraically closed in $B$ and $A\subseteq B_{\Gamma}$,
it follows that $A=B_{\Gamma}$ and $B=A[p]\cong A^{[1]}$. 
\end{proof}

In case $B$ is a polynomial ring, we get the following. 

\begin{corollary}\label{poly-ring} Given the integer $n\ge 3$, let $B=k^{[n]}$ with an effective $G\cong \Z^{n-2}$-grading.
Let $D\in {\rm LND}(B)$ be nonzero and homogeneous with $A=\krn D$, and let $\Gamma\subset G$ be the subgroup generated by $\deg (A)$. 
Assume that $A_0=k$. 
\begin{itemize}
\item [{\bf (a)}] $A\cong k[\Delta ]^{[m]}$ for some trinomial data $\Delta$ over $k$ and $m\ge 0$. In particular, $A$ is affine and rational. 
\item [{\bf (b)}] If ${\rm rank}(\Gamma )\le n-3$ then $B\cong A^{[1]}$ and $A\cong_kk^{[n-1]}$. 
\end{itemize}
\end{corollary}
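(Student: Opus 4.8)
The plan is to deduce everything from {\it Theorem\,\ref{codim-2}}, which applies to $B=k^{[n]}$ since a polynomial ring is a rational (hence unirational) UFD of transcendence degree $n$ with $B^*=k^*$, and it carries the given effective $\Z^{n-2}$-grading. Writing $\Gamma$ for the subgroup of $G$ generated by $\deg(A)$, I would split on its rank, which is at most $n-2$. If ${\rm rank}(\Gamma)=n-2$, then {\it Theorem\,\ref{codim-2}(a)} applies directly and gives $A\cong_kk[\Delta]^{[m]}$, which is exactly statement (a); statement (b) is vacuous here. If ${\rm rank}(\Gamma)\le n-3$, then {\it Theorem\,\ref{codim-2}(b)} gives $B\cong A^{[1]}$, which is the first assertion of (b); moreover its proof forces $G=\Gamma\oplus\langle\deg_{\mathfrak{g}}p\rangle$, so in fact ${\rm rank}(\Gamma)=n-3$, $\dim A=n-1$, and $A$ inherits an effective $\Z^{n-3}$-grading with $A_0=k$. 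Since $k^{[n-1]}$ is itself of the form $k[\Delta]^{[m]}$ (trinomial data with $r=1$), it remains only to prove the second assertion of (b), namely $A\cong_kk^{[n-1]}$; this will simultaneously finish (a) in this case.

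The heart of the argument is therefore the passage from $B\cong A^{[1]}\cong k^{[n]}$ to $A\cong_kk^{[n-1]}$. Rather than invoke a general cancellation theorem, which is unavailable in this dimension, I would use the grading. First, $A$ is regular: since $A^{[1]}\cong k^{[n]}$ is smooth, ${\rm Spec}(A)\times\A^1$ is smooth over $k$, hence ${\rm Spec}(A)$ is smooth and $A$ is a regular domain of dimension $n-1$. Next, $A$ is factorially closed in $B$, being the kernel of a locally nilpotent derivation, so $A^*=A\cap B^*=k^*$; as $A_0=k$ is a field, {\it Proposition\,\ref{Laurent}(c)} shows the $\Z^{n-3}$-grading of $A$ is unmixed.

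With unmixedness in hand, I would refine the grading to a connected one. By {\it Proposition\,\ref{unmixed-monoid}} the weight monoid $M_A\subset\Gamma$ lies in $\Gamma_+$ for a suitable total order, and since $A$ is affine, $M_A$ is finitely generated and contained in a pointed cone; choosing a $\Z$-linear form $\ell\colon\Gamma\to\Z$ strictly positive on $M_A\setminus\{0\}$ yields an $\N$-grading $A=\bigoplus_{j\ge 0}A^{(j)}$ with $A^{(0)}=A_0=k$ and each $A^{(j)}$ finite-dimensional. Thus $A$ is a connected $\N$-graded regular affine domain. Its irrelevant ideal $\mathfrak{m}=\bigoplus_{j>0}A^{(j)}$ is the unique homogeneous maximal ideal, of height $\dim A=n-1$; regularity of $A_{\mathfrak{m}}$ gives $\dim_k(\mathfrak{m}/\mathfrak{m}^2)=n-1$, so by graded Nakayama $A$ is minimally generated by $n-1$ homogeneous elements. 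The resulting surjection $k^{[n-1]}\twoheadrightarrow A$ of $(n-1)$-dimensional domains has trivial kernel, whence $A\cong_kk^{[n-1]}$.

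The step I expect to be the crux is precisely this reduction when ${\rm rank}(\Gamma)\le n-3$: the naive route would require a cancellation implication $A^{[1]}\cong k^{[n]}\Rightarrow A\cong k^{[n-1]}$, which is not known in general. The point to get right is that the hypotheses $A_0=k$ and $A^*=k^*$ force the grading to be unmixed (via {\it Proposition\,\ref{Laurent}(c)}), so that $A$ becomes connected graded; then the smoothness of $A$, transported from $B=k^{[n]}$, together with the standard fact that a regular connected graded affine domain is a polynomial ring, closes the gap with no cancellation input. I would take particular care that $\ell$ can be chosen to keep $A^{(0)}=k$ and all pieces finite-dimensional, as this is what makes the graded Nakayama count legitimate.
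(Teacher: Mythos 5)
Your proposal is correct and, in its skeleton, matches the paper: both proofs reduce everything to Theorem~\ref{codim-2} and split on ${\rm rank}(\Gamma)$, with the case ${\rm rank}(\Gamma)=n-2$ handled verbatim by part (a) of that theorem. The divergence is in the case ${\rm rank}(\Gamma)\le n-3$. After obtaining $B\cong A^{[1]}$, the paper settles $A\cong_k k^{[n-1]}$ in one line by observing that $A$ is affine with ${\rm Spec}(A)$ smooth and citing Corollary~\ref{smooth}; you instead descend regularity from $A^{[1]}\cong k^{[n]}$, use $A^*=k^*$ and Proposition~\ref{Laurent}(c) to see that the induced $\Gamma$-grading of $A$ is unmixed, refine it to a connected $\N$-grading, and apply the standard fact that a regular connected $\N$-graded affine domain over $k$ is a polynomial ring (graded Nakayama at the irrelevant ideal plus a dimension count). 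At bottom these are the same idea --- the tangent space at the unique fixed point of the torus action --- but your version is the more self-contained one: Corollary~\ref{smooth} is stated only for rings already presented as $k[\Delta]$, and in this branch $A$ carries a grading of rank $n-3=\dim A-2$, so Theorem~\ref{main} cannot be invoked to produce such a presentation first; your argument supplies exactly the step the paper leaves implicit, at the cost of a page of standard commutative algebra. Two small remarks: for $n=3$ this branch is vacuous (it would force $\Gamma=\{0\}$ and hence $A=A_0=k$, contradicting $\dim A=2$, which is how the paper's observation that $\Gamma\ne\{0\}$ should be read), and the finite-dimensionality of the pieces $A^{(j)}$, though true, is not actually needed --- the induction on $\ell$-degree only uses $A^{(0)}=k$ together with the finite-dimensionality of $\mathfrak{m}/\mathfrak{m}^2$, which already follows from $A$ being noetherian.
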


\begin{proof} By {\it Theorem\,\ref{codim-2}}, if ${\rm rank}(\Gamma )=n-2$ then $A\cong k[\Delta ]^{[m]}$ for some $m\ge 0$, and if ${\rm rank}(\Gamma )\le n-3$ then $B\cong A^{[1]}$. In the latter case, we see that $A$ is affine and ${\rm Spec}(A)$ is smooth. 
Since $A_0=k$ and $\dim A=n-1\ge 2$, it follows that $\Gamma\ne\{ 0\}$. By {\it Corollary\,\ref{smooth}}, $A\cong_kk^{[n-1]}$. 
\end{proof}

\begin{corollary}\label{dim-4} Let $B=k^{[4]}$ with an effective $\Z^2$-grading, let $D\in {\rm LND}(B)$ be nonzero and homogeneous, and let $A=\krn D$.
If $A_0=k$ then either $A\cong k^{[3]}$ or 
\[
A\cong k[x,y,u,v]/(x^a+y^b+u^cv^d)
\]
for $a,b,c,d\in\N\setminus\{ 0\}$ with $a> b\ge 2$ and $\gcd (a,b)=\gcd (c,d,ab)=1$.
\end{corollary}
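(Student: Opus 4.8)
The plan is to treat this as the $n=4$ instance of {\it Corollary\,\ref{poly-ring}} and then to invoke the dimension-three classification of {\it Theorem\,\ref{dim-3}}. First I record that $\dim B=4$, the grading group is $G\cong\Z^2=\Z^{n-2}$, and $A=\krn D$ is a graded UFD of transcendence degree $3$ with $A_0=k$ and $A^*=k^*$ (a unit of $A$ is a unit of $B=k^{[4]}$). Then {\it Corollary\,\ref{poly-ring}(a)} gives $A\cong_kk[\Delta]^{[m]}$ with $A$ affine and rational; since $\dim k[\Delta]\ge 2$ by {\it Theorem\,\ref{DFN}}, necessarily $m\in\{0,1\}$. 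The first alternative of the corollary comes from the low-rank case: if ${\rm rank}(\Gamma)\le n-3=1$, then {\it Corollary\,\ref{poly-ring}(b)} yields $A\cong_kk^{[3]}$ at once.

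The content therefore lies in the case ${\rm rank}(\Gamma)=2$, where $\Gamma\cong\Z^2$ and $A$ becomes an affine UFD of dimension three carrying an effective $\Z^2$-grading with $A_0=k$; by {\it Proposition\,\ref{Laurent}(c)} this grading is unmixed, so {\it Theorem\,\ref{dim-3}(a)} applies and presents $A$ as a dimension-three trinomial ring. When that ring is a single trinomial, coming from trinomial data on the partition $4=1+1+2$, I would scale the three coefficients to $1$ and rename the two singleton variables $x,y$ and the size-two block $u,v$, obtaining $A\cong k[x,y,u,v]/(x^a+y^b+u^cv^d)$. The inequalities $a>b\ge 2$ then follow from reducedness ($|\beta_i|\ge 2$) after ordering the two singleton exponents, while $\gcd(a,b)=1$ and $\gcd(c,d,ab)=1$ are precisely the pairwise-coprimality conditions $(\Delta.2)$ applied to $d_0=a$, $d_1=b$, $d_2=\gcd(c,d)$.

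The hard part is to show that the rank-two case really does land on a single trinomial with a genuine product $u^cv^d$ (both $c,d\ge 1$), and not on one of the other dimension-three trinomial rings that {\it Theorem\,\ref{dim-3}} permits: a ring with $r\ge 3$ on a partition $1+\cdots+1+2$ (which needs at least five generators by {\it Theorem\,\ref{generators}}), or a product of a Pham--Brieskorn surface $k[x,y,z]/(x^a+y^b+z^c)$ with a line (the degenerate shape in which the distinguished monomial is a pure power, i.e. $d=0$). Neither the rank nor the dimension separates these possibilities, so the elimination must use that $B=k^{[4]}$ is a genuine polynomial ring rather than an arbitrary graded UFD. My approach would be to feed the homogeneous local slice---a homogeneous $f\in A$ with $B_f\cong_kA_f^{[1]}$, so that $A_f$ is regular---into an analysis of the $T$-invariant singular locus and the torus-orbit structure of ${\rm Spec}(A)$, using the complexity-one $T$-action both to cap the number of generators at four and to force the distinguished monomial to be a size-two block. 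I expect precisely this step---turning the regularity of a localization, together with the polynomiality of $B$, into the exclusion of the pure-power and $r\ge 3$ shapes---to be the delicate core of the argument; the remaining bookkeeping of exponents is routine.
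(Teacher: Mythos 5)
Your reduction is set up correctly and follows the paper's route: invoke \emph{Corollary\,\ref{poly-ring}} to get $A\cong k[\Delta]^{[m]}$, dispose of the low-rank case via part (b), and in the rank-two case feed the affine three-dimensional UFD $A$ with its effective unmixed $\Z^2$-grading and $A_0=k$ into \emph{Theorem\,\ref{dim-3}}. You have also correctly identified exactly what remains: one must rule out the presentations with more than one trinomial relation ($r\ge 2$) and the degenerate pure-power shape, and neither the dimension nor the rank of the grading does this. But at precisely that point your proposal stops; the ``local slice / $T$-invariant singular locus / orbit structure'' strategy you sketch is not carried out, and it is doubtful it could be: a ring such as $k[x,y,z,w]/(x^a+y^b+z^c)$ with $a,b,c$ pairwise coprime satisfies every \emph{intrinsic} hypothesis in sight (rational affine UFD of dimension three, effective unmixed $\Z^2$-grading, trivial units, degree-zero part $k$), so no analysis of ${\rm Spec}(A)$ alone can exclude it. What kills these cases is the \emph{extrinsic} datum that $A=\krn D$ sits inside $B=k^{[4]}$ as a factorially closed subring.

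The paper's argument at this step is short and concrete: if $r\ge 2$, the Mori-form presentation contains the relation $h_0^{\beta_0}+h_1^{\beta_1}+h_2^{\beta_2}=0$ with $\beta_0>\beta_1>\beta_2\ge 2$, where $h_0,h_1,h_2$ are irreducible and pairwise relatively prime in $A$ (\emph{Corollary\,\ref{tri-data2}} / \emph{Proposition\,\ref{fact-closed}}); since $A$ is factorially closed in $k^{[4]}$, they remain irreducible and pairwise relatively prime there, and \emph{Lemma\,\ref{Iitaka}} (Iitaka's homogeneous ABC-type theorem in polynomial rings) forces the smallest exponent to be $1$, contradicting $\beta_2\ge 2$. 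Hence $r=1$, only the single relation $x^a+y^b+u^cv^d=0$ survives, and the same lemma disposes of the pure-power case ($d=0$ with $c\ge 2$ would again give a trinomial relation among three pairwise coprime irreducibles of $k^{[4]}$ with all exponents $\ge 2$), leaving $A\cong k^{[3]}$ when any effective exponent equals $1$. To complete your proof you need this Iitaka-plus-factorial-closedness mechanism (or an equivalent substitute that genuinely uses the embedding in $k^{[4]}$); as written, the argument has a gap at its core step.
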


\begin{proof} 
Note that $\dim A=3$. 
By {\it Corollary\,\ref{poly-ring}} and {\it Theorem\,\ref{dim-3}}, 
there exists trinomial data $\Delta$ over $k$ in Mori form defined by either (1) $r=1$ and $\beta_0=\beta_1=1$ or (2) $r\ge 2$,  $\lambda_2=1$ and integers $\beta_0>\beta_1>\cdots >\beta_r\ge 2$, 
such that
\[
A\cong_kk[\Delta ][u,v]/(T_0^{\beta_0}+\lambda T_1^{\beta_1}+u^cv^d)
\]
where $\lambda\in k^*\setminus\{ \lambda_2,\hdots ,\lambda_r\}$, $(c,d)\in\N^2\setminus\{ (0,0\}$, $c\ge d$ and $\gcd (c,d,\beta_0\cdots\beta_r)=1$. 
Suppose that $r\ge 2$. Then $T_0^{\beta_0}+T_1^{\beta_1}+T_2^{\beta_2}=0$ in $A$. By {\it Corollary\,\ref{tri-data}}, $T_0,T_1,T_2$ are irreducible and pairwise relatively prime in $A$.
Since $A$ is factorially closed in $B$, $T_0,T_1,T_2$ are irreducible and pairwise relatively prime in $B=k^{[4]}$. By {\it Lemma\,\ref{Iitaka}}, $\beta_2=1$, which gives a contradiction. Therefore, $r=1$ and $\beta_0=\beta_1=1$, so $k[\Delta ]=k[T_0,T_1]\cong k^{[2]}$
and $A\cong k[x,y,u,v]/(x^{\beta_0}+y^{\beta_1}+u^cv^d)$. 
\end{proof}

\begin{example} {\rm A {\bf monomial $k$-derivation} $D$ of $B=k[x_1,\hdots ,x_n]\cong k^{[n]}$ is one for which $Dx_i$ is a monomial in $x_1,\hdots ,x_n$ for each $i$. 
If $D$ is locally nilpotent, then it is necessarily triangular in $x_1,\hdots ,x_n$ up to re-ordering of the variables; see \cite{Freudenburg.17}, Proposition 3.4. 
These images can be homogeneous for various $\Z$-gradings of $B$. 
In particular, if $D$ is triangular and $Dx_1=\cdots =Dx_m=0$ for $1\le m<n$, then we are free to choose degrees for $x_1,\hdots ,x_m$ and for $D$. 
In this way, one obtains an effective unmixed $\Z^{m+1}$-grading of $B$ with $B_0=k$ for which $D$ is homogeneous. 
In case $n=4$, this gives a $\Z^2$-grading and {\it Corollary\,\ref{dim-4}} shows that either $\krn D\cong k^{[3]}$ or $\krn D\cong k[x,y,u,v]/(x^a+y^b+u^cv^d)$ for certain integers $a,b,c,d$. 
In \cite{Maubach.00}, Maubach proved by another method (without assuming $k$ is algebraically closed) that a triangular monomial derivation of $k^{[4]}$ has kernel isomorphic to $k^{[3]}$ or $k[x,y,u,v]/(x^a+y^b+u^cv)$, i.e., $d=1$.}
\end{example}

We also get a short proof of the following; see \cite{Freudenburg.17}, Theorem 5.5 and Corollary 5.6. 

\begin{theorem}\label{3d-ufd} 
Let $B$ be an affine UFD over $k$ where $\dim B=3$, $\dim ML(B)\le 1$ and $B^*=k^*$. Assume that $B$ has an effective $\Z$-grading, and let $D\in {\rm LND}(B)$ be homogeneous and nonzero with $A=\krn D$.
Assume that $A_0=k$. 
\begin{itemize} 
\item [{\bf (a)}] Either $A\cong_kk^{[2]}$ or $A\cong_kk[\Delta_0]$. 
\item [{\bf (b)}] If $B$ is a factorially closed subring of $k^{[m]}$ for some $m\ge 3$ then $A\cong_kk^{[2]}$. 
\item [{\bf (c)}] If $A\cong_kk^{[2]}$ then there exist homogeneous $f,g\in A$ such that $A=k[f,g]$. 
\item [{\bf (d)}] If $A\cong_kk[\Delta_0]$ then there exist homogeneous $x,y,z\in A$ such that:
\[
A=k[x,y,z] \quad {\rm and}\quad x^2+y^3+z^5=0
\]
\end{itemize}
\end{theorem}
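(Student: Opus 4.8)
The plan is to reduce everything to the two-dimensional classification together with the stable rigidity result \emph{Theorem \ref{stable}}. First I would record the structure of $A=\krn D$. Since $B$ is a UFD and $A$ is the kernel of a nonzero locally nilpotent derivation, $A$ is factorially closed in $B$, hence itself a UFD, and $A^*\subseteq B^*=k^*$ gives $A^*=k^*$. As $D\ne 0$ in characteristic zero, $A$ has transcendence degree $\dim B-1=2$. Because $D$ is homogeneous for the given $\Z$-grading, $A=\bigoplus_g(A\cap B_g)$ is a graded subring with $A_0=k$ by hypothesis; dividing all weights by the positive generator of the subgroup of $\Z$ they span makes this an effective $\Z$-grading, and it is unmixed by \emph{Proposition \ref{Laurent}(c)} since $A_0=k$ is a field and $A^*=A_0^*$. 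Finally, $A$ is algebraically closed in $B$ (being factorially closed), so $A={\rm frac}(A)\cap B$; as ${\rm tr.deg}_k{\rm frac}(A)=2$, a finiteness theorem of Zariski for subfields of transcendence degree at most two shows $A$ is affine. Thus $A$ is an affine UFD of dimension two carrying an effective unmixed $\Z$-grading with $A_0=k$.

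Now \emph{Theorem \ref{main}} (condition (3) $\Rightarrow$ (1), with $d=2$) gives $A\cong_kk[\Delta]^{[m]}$; since every $k[\Delta]$ has dimension $\ge 2$ by \emph{Theorem \ref{DFN}} while $\dim A=2$, necessarily $m=0$ and $\Delta$ uses the unit partition, and by \emph{Lemma \ref{Mori-form}} we may take $\Delta$ in Mori form. This realizes $A$ as a factorially closed subalgebra of $B$ isomorphic to such a $k[\Delta]$, so \emph{Theorem \ref{stable}(a)} applies: if $A$ were isomorphic to neither $k^{[2]}$ nor $k[\Delta_0]$, then $A\subseteq ML(B)$, forcing $\dim ML(B)\ge\dim A=2$ and contradicting $\dim ML(B)\le 1$. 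Hence $A\cong_kk^{[2]}$ or $A\cong_kk[\Delta_0]$, proving (a). I expect this step -- the passage from the two-dimensional classification to the dichotomy via the $ML$-dimension count -- to be the conceptual heart of the argument, with the real content hidden inside \emph{Theorem \ref{stable}} (the ABC inequality).

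For (b), note that if $B$ is factorially closed in $k^{[m]}$ then so is $A$. Suppose $A\cong_kk[\Delta_0]$; by \emph{Corollary \ref{tri-data2}} and \emph{Proposition \ref{fact-closed}} the images $h_0,h_1,h_2$ of the variables are pairwise nonassociate primes of $A$ with $h_0^5+h_1^3+h_2^2=0$, and factorial closedness transports primality and coprimality to $k^{[m]}$. This contradicts \emph{Lemma \ref{Iitaka}}, which forces the smallest exponent in such a relation to be $1$. Therefore $A\not\cong k[\Delta_0]$, and part (a) gives $A\cong_kk^{[2]}$.

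Finally, parts (c) and (d) are read off from the graded presentation, since the identification $A\cong k[\Delta]$ above is as graded rings and, by \emph{Corollary \ref{tri-data2}}, the variable images form a complete homogeneous signature sequence generating $A$. If $A\cong_kk^{[2]}$ then $r=1$ in the Mori form: for $r\ge 2$, \emph{Theorem \ref{generators}} makes the $\Delta$-origin singular, so $\mathrm{Spec}(A)$ is not an affine space (cf.\ \emph{Corollary \ref{smooth}}). Hence $A=k[h_0,h_1]$ with $h_0,h_1$ homogeneous, giving (c). If $A\cong_kk[\Delta_0]$, then since both the Mori form $\Delta$ and $\Delta_0$ are in Mori form, \emph{Theorem \ref{non-iso}} yields $\Delta=\Delta_0$; the homogeneous generators then satisfy $h_0^5+h_1^3+h_2^2=0$, and setting $(x,y,z)=(h_2,h_1,h_0)$ gives $A=k[x,y,z]$ with $x^2+y^3+z^5=0$, proving (d).
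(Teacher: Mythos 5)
Your argument for parts (a) and (b) is essentially the paper's: reduce to the two-dimensional case of \emph{Theorem \ref{main}} to get $A\cong k[\Delta]$ with unit partition, then use \emph{Theorem \ref{stable}(a)} and $\dim ML(B)\le 1$ to force the dichotomy $k^{[2]}$ or $k[\Delta_0]$. For (b) the paper simply cites \emph{Theorem \ref{stable}(c)}, which is literally stated for $B\cong k^{[m]}$; your direct transport of primality through the factorially closed inclusions $A\subseteq B\subseteq k^{[m]}$ followed by \emph{Lemma \ref{Iitaka}} is the right reading of that citation and is, if anything, more careful about the actual hypothesis.

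For (c) and (d) you replace the paper's citations (respectively, Corollary 3.43 of \cite{Freudenburg.17} and \emph{Theorem \ref{max-torus}}) by reading homogeneity of the generators off the construction in \emph{Theorem \ref{classify}}. For (d) this works: in the non-polynomial case the proof of \emph{Theorem \ref{classify}} really does produce generators that are homogeneous for the \emph{given} grading and satisfy the trinomial relations, the passage to Mori form only rescales and permutes them, and \emph{Theorem \ref{non-iso}} (or the exponent count $\{5,3,2\}$ inside the proof of \emph{Theorem \ref{stable}}) pins down $\Delta=\Delta_0$; this is a legitimate alternative to the paper's appeal to uniqueness of the maximal torus. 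But in (c) there is a thin spot: when $A\cong k^{[2]}$ the proof of \emph{Theorem \ref{classify}} dispatches the polynomial case at the outset \emph{without} constructing homogeneous generators, so ``$A=k[h_0,h_1]$ with $h_0,h_1$ homogeneous'' is not actually delivered by the graded presentation you invoke. You need a separate step here, e.g.\ build a complete homogeneous signature sequence via \emph{Lemma \ref{complete}} and show it has exactly two nontrivial terms (graded Nakayama at the irrelevant maximal ideal, whose tangent space has dimension $2$ by smoothness, as in the proof of \emph{Theorem \ref{generators}}), or fall back on the reference the paper uses. This is a fixable omission rather than a wrong turn.
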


\begin{proof} First observe that $A$ is affine by the Zariski Finiteness Theorem; see \cite{Freudenburg.17}, Section 6.4.1. 
Moreover, $A$ is a UFD and $\dim A=2$, since it is the kernel of a locally nilpotent derivation of a UFD of dimension three. 
Since $\dim ML(B)\le 1$ by hypothesis, we see that $A\not\subset ML(B)$. 
Also, the assumption that $A_0=k$ insures that the $\Z$-grading of $A$ is not trivial. So $A$ has an effective $\Z$-grading with $A_0=k$ and $A_0^*=k^*$. 
By {\it Theorem\,\ref{main}}, $A\cong_kk[\Delta ]$ for trinomial data $\Delta$ over $k$ with unit partition. 
By {\it Theorem\,\ref{stable}}, either $A\cong_kk^{[2]}$ or $A\cong_kk[\Delta_0]$. This proves part (a), and part (b) follows from
{\it Theorem\,\ref{stable}(c)}. 
Part (c) follows from \cite{Freudenburg.17}, Corollary 3.43, and part (d) follows from {\it Theorem\ref{max-torus}}.
\end{proof}

The next example is one in which the case $A\cong_kk[\Delta_0]$ in {\it Theorem\,\ref{3d-ufd}} occurs. For this example, the following lemma is needed. 

\begin{lemma}\label{Samuel} {\rm (\cite{Samuel.68}, Section 5, Part (3))} Let $B$ be a UFD over a field $K$ and let the finite group $G$ act on $B$ by algebraic automorphisms of $B$ as a $K$-algebra. 
If $B^*=K^*$ and ${\rm Hom}(G,K^*)$ is trivial then $B^G$ is a UFD.
\end{lemma}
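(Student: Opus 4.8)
The plan is to prove that $A := B^G$ is a UFD by showing its divisor class group ${\rm Cl}(A)$ is trivial, using the standard fact that a Krull domain is a UFD precisely when its class group vanishes. First I would record the preliminary structure: since $B$ is a UFD it is a Krull domain with ${\rm Cl}(B)=0$, the extension $A\subseteq B$ is integral because $G$ is finite, and the ring of invariants of a Krull domain under a finite group is again Krull, so $A$ is Krull. I would also observe that, as $G$ acts by $K$-algebra automorphisms, it fixes $K$ pointwise, so its action on $B^*=K^*$ is trivial; this is what will let a cocycle be read as an element of ${\rm Hom}(G,K^*)$.

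The core of the argument is the construction of an injective homomorphism $\theta:{\rm Cl}(A)\to {\rm Hom}(G,K^*)$. Given a divisor $D$ on $A$, let $\tilde D$ denote its extension to $B$, namely the sum over the height-one primes $\mathfrak{P}$ of $B$ lying over the primes in $D$, weighted by the ramification indices $e_{\mathfrak{P}/\mathfrak{p}}$. Because $G$ permutes transitively the primes of $B$ over a fixed prime of $A$ and preserves ramification, $\tilde D$ is $G$-invariant. Since ${\rm Cl}(B)=0$, we may write $\tilde D={\rm div}_B(f)$ for some $f\in{\rm frac}(B)$, and the $G$-invariance of $\tilde D$ gives ${\rm div}_B(g(f)/f)=0$, hence $g(f)/f\in B^*=K^*$ for every $g\in G$. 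Setting $\chi_f(g)=g(f)/f$, a short computation using that $g$ fixes $K$ shows $\chi_f\in{\rm Hom}(G,K^*)$; replacing $f$ by $uf$ with $u\in K^*$ does not change $\chi_f$, so $\theta([D]):=\chi_f$ is well defined, and multiplicativity in $f$ makes $\theta$ a homomorphism sending principal divisors to the trivial character.

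To complete the proof I would establish injectivity of $\theta$. If $\chi_f$ is trivial then $g(f)=f$ for all $g$, so $f\in A$; for such $f$ one has ${\rm ord}_{\mathfrak{P}}(f)=e_{\mathfrak{P}/\mathfrak{p}}\,{\rm ord}_{\mathfrak{p}}(f)$ along each height-one prime $\mathfrak{P}$ of $B$, which identifies $\tilde D={\rm div}_B(f)$ with the extension of ${\rm div}_A(f)$. The extension map ${\rm Div}(A)\to{\rm Div}(B)$ is injective (distinct height-one primes of $A$ have disjoint, nonempty fibres in $B$), so $D={\rm div}_A(f)$ is principal. Thus $\theta$ is injective; since ${\rm Hom}(G,K^*)$ is trivial by hypothesis, ${\rm Cl}(A)=0$ and $A=B^G$ is a UFD.

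I expect the main obstacle to be the ramification bookkeeping: making the divisor-extension map ${\rm Div}(A)\to{\rm Div}(B)$ well defined and injective, and verifying the compatibility ${\rm div}_B(f)=\widetilde{{\rm div}_A(f)}$ for invariant $f$. The other delicate point, though routine, is checking that $g\mapsto g(f)/f$ genuinely lands in $K^*$ and is a homomorphism rather than merely a $1$-cocycle, which is exactly where the hypotheses $B^*=K^*$ and the triviality of the $G$-action on $K$ are used.
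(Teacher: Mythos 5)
The paper offers no proof of this lemma---it is quoted verbatim from Samuel---and your argument is a correct reconstruction of Samuel's own proof, embedding ${\rm Cl}(B^G)$ into $H^1(G,B^*)={\rm Hom}(G,K^*)$ via the character $g\mapsto g(f)/f$ attached to a generator $f$ of the extended divisor. The only slip is cosmetic: when $\chi_f$ is trivial you obtain $f\in{\rm frac}(B)^G={\rm frac}(B^G)$ rather than $f\in A$, which is all the injectivity step requires.
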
 

\begin{example} {\rm Let 
\[
\begin{pmatrix} x&y\cr z&w\end{pmatrix} \quad , \quad xw-yz=1
\]
be coordinates for $SL_2(k)$ and let $B$ denote the coordinate ring of $SL_2(k)$, which is a UFD with $B^*=k^*$. Let $B$ have the $\Z$-grading where $x,y,z,w$ are homogeneous and
$\deg (x,y,z,w)=(1,-1,1,-1)$. We define two group actions on $B$. 
\begin{enumerate}
\item 
Let $SL_2(k)$ act on itself by right multiplication. 
By restriction, $\G_a$ acts on $B$ by both the unipotent upper triangular subgroup and the unipotent lower triangular subgroup, where $B^{\G_a}=k[x,z]\cong k^{[2]}$ in the first case, and $B^{\G_a}=k[y,w]$ in the second case. 
Note that these are graded rings and $(B^{\G_a})_0=k$ in each case. 
\item 
Let $G\subset SL_2(k)$ be the binary icosahedral group, a group of order 120; see \cite{Riemenschneider.77} for a particular representation. 
Let $G$ act on $SL_2(k)$ by left multiplication. 
Since $B^*=k^*$, we have $(B^G)^*=k^*$. Let $\rho\in {\rm Hom}(G,k^*)$ be given. 
Since $k^*$ is abelian, $\rho$ factors through the quotient $G/[G,G]$ where $[G,G]$ is the commutator subgroup. It is well-known that $G$ is a perfect group, that is, $G=[G,G]$. 
Therefore, $\rho$ is trivial. By {\it Lemma\,\ref{Samuel}}, $B^G$ is a UFD. 
\end{enumerate}
It is clear that these two actions commute, which means that both the $SL_2(k)$-action on $B$ and the $\Z$-grading of $B$ restrict to $B^G$.
Therefore, $B^G$ has two homogeneous locally nilpotent derivations with kernels $k[x,z]^G$ and $k[y,w]^G$. It follows that:
\[
ML(B)\subseteq k[x,z]^G\cap k[y,w]^G=k \implies ML(B^G)=k
\]
For the first kernel we have $A:=k[x,z]^G=k[p,q,r]$ where $p^5+q^3+r^2=0$ and the degrees of $p,q,r$ in $x$ and $z$ are 12, 20 and 30, respectively; see \cite{Riemenschneider.77}.
So $A\cong_kk[\Delta_0]$. 
}
\end{example} 

We close with the following question, which arose in the course of our investigations. 

\begin{question} Let $G\cong\Z^n$ for some $n\ge 1$, let $M\subseteq G$ be a submonoid which generates $G$ as a group, and let $H\subseteq M$ be its maximal subgroup. 
Is $G/H$ torsion free? 
\end{question}



\bigskip

\noindent \address{Department of Mathematics\\
Western Michigan University\\
Kalamazoo, Michigan 49008}\\
USA\\
\email{gene.freudenburg@wmich.edu}
\bigskip

\noindent\address{National Institute of Technology (KOSEN)\\
Oyama College\\
771 Nakakuki, Oyama city, Tochigi 323-0806}\\
Japan\\
\email{t.nagamine14@oyama-ct.ac.jp}

\end{document}